\newtheorem{thm}{Theorem}[section]
\newtheorem{cor}[thm]{Corollary}
\newtheorem{prp}[thm]{Proposition}
\newtheorem{clm}[thm]{Claim}
\newtheorem*{clm*}{Claim}
\theoremstyle{definition}
\newtheorem{df}[thm]{Definition}
\newtheorem{prb}[thm]{Problem}
\newtheorem{remrk}[thm]{Remark}
\numberwithin{equation}{section}
\newcommand{\sprf}{\noindent{\it Proof.}} %used in spf environment
\newcommand{\sqed}{\hfill\rule{1.3mm}{3mm}\medskip}
\newcommand{\cproof}{\noindent{\it Proof of claim.}\ } %used in spf environment
\newcommand{\cqed}{\hfill\rule{1.3mm}{3mm}}
\newcommand{\wec}[1]{{\mathbf{#1}}}  % notation for vectors in algebras
\DeclareMathOperator{\Con}{Con}
\DeclareMathOperator{\typ}{typ}
\newcommand{\bd}{\begin{description}}
\newcommand{\ed}{\end{description}}
\newcommand{\var}[1]{{\mathcal{\uppercase{#1}}}}
\newcommand{\class}[1]{{\mathcal{\uppercase{#1}}}}
\newcommand{\al}[1]{\mathbf{#1}}
\newcommand{\ccc}{\mathsf{c}}
\newcommand{\ddd}{\mathsf{d}}
\newcommand{\PP}{\mathsf{P}}
\newcommand{\HH}{\mathbb{H}}
\newcommand{\SSS}{\mathbb{S}}
\newcommand{\diag}{\text{\tt d}}
\newcommand{\frakC}{\frak{C}}
\DeclareMathOperator{\SMP}{\text{\sc SMP}}
\newcommand{\dcoh}{\text{\rm$ \ddd$-coh}}
\newcommand{\dcentr}{\text{\rm $\ddd$-centr}}
\DeclareMathOperator{\SMPd}{\SMP_{\dcoh}}
\DeclareMathOperator{\SMPdc}{\SMP_{\dcentr}}
\newcommand{\us}[1]{^{(#1)}}
\newcommand{\ls}[1]{_{#1}}
\newcommand{\lsus}[2]{_{#1}^{(#2)}}
\newcommand{\ar}{\text{\rm ar}}
\newcommand{\Alg}{\text{\sf Alg}}
\newcommand{\MSAlg}{\text{\sf MSAlg}}
\newcommand{\DAlg}{\text{\sf DAlg}}
\newcommand{\Comma}{\bigl(\Alg(\lngF)\,{\downarrow}\,\al{I}\bigr)}
\newcommand{\comma}{\bigl(\Alg(\lngF)\,{\downdownarrows}\,\al{I}\bigr)}
\newcommand{\Vcomma}{\bigl(\var{V}\,{\downdownarrows}\,\al{I}\bigr)}
\newcommand{\Wcommaell}{\bigl(\var{W}_\ell\,{\downdownarrows}\,\al{I}_\ell\bigr)}
\newcommand{\prodI}{\prod^{\downdownarrows\al{I}}}
\newcommand{\ProdI}{\sideset{}{\kern-2pt{}^{\downdownarrows\al{I}}}\prod}
\newcommand{\lngF}{\mathcal{F}}
\newcommand{\lngG}{\mathcal{G}}
\newcommand{\DvarG}{\mathcal{D}(\hat{\lngG})}
\newcommand{\hatt}[1]{\hat{\text{\tiny(}#1\text{\tiny)}}}
\let\phi=\varphi
\let\epsilon=\varepsilon
\let\bar=\overline
\let\hat=\widehat
\let\tilde=\widetilde
\begin{document}

\title[Algebras from Congruences]{Algebras from Congruences}

\author{Peter Mayr}
\address[Peter Mayr]{Department of Mathematics\\
University of Colorado\\
Boulder CO, USA, 80309-0395}
\email{Peter.Mayr@Colorado.EDU}

\author{\'Agnes Szendrei}
\address[\'Agnes Szendrei]{Department of Mathematics\\
University of Colorado\\
Boulder CO, USA, 80309-0395}
\email{Agnes.Szendrei@Colorado.EDU}

\thanks{This material is based upon work supported by
the Austrian Science Fund (FWF) grant no.\ P24285,
the National Science Foundation grant no.\ DMS 1500254, 
the Hungarian National Foundation for Scientific Research (OTKA)
grants no.\ K104251 and K115518, and
the National Research, Development and Innovation Fund of
Hungary (NKFI) grant no. K128042.
}
\subjclass[2010]{08C05, 18C05, 08A30}
\keywords{congruence, commutator, Maltsev condition, supernilpotence, 
subpower membership}

\begin{abstract}
 We present a functorial construction which, starting from a congruence $\alpha$ of finite index in an algebra $\al{A}$,
 yields a new algebra $\al{C}$ with the following properties:
 the congruence lattice of $\al{C}$ is isomorphic to the interval of congruences between $0$ and $\alpha$
 on $\al{A}$,
 this isomorphism preserves higher commutators and TCT types,
 and $\al{C}$ inherits all idempotent Maltsev conditions from $\al{A}$.

 As applications of this construction, we first show that supernilpotence is decidable for congruences of finite
 algebras in varieties that omit type $\mathbf{1}$.
 Secondly, we prove that 
 the subpower membership problem for finite algebras with a cube term can be effectively reduced to membership
 questions in subdirect products of subdirectly irreducible algebras with central monoliths.
 As a consequence, we obtain a polynomial time algorithm for the subpower membership problem for finite algebras with a cube
 term in which the monolith of every subdirectly irreducible section has a supernilpotent centralizer.  
\end{abstract}

\maketitle

\section{Introduction}
\label{sec-intro}
This paper was motivated by Problems~\ref{prb-1}--\ref{prb-2} below.
In each problem 
the question is whether or not a result that is known for certain algebras
can be lifted to congruences.

\begin{prb}
\label{prb-1}
Let $\al{A}$ be a finite algebra (in a finite language). 
Given a congruence $\alpha$ of $\al{A}$, is it decidable if $\alpha$ is 
supernilpotent?
\end{prb}

This question arises from the result that finite nilpotent Maltsev algebras are non-dualizable if
they have some non-abelian supernilpotent congruence~\cite{BM:SPD}.

In the special case when
$\alpha$ is the total congruence of $\al{A}$,
i.e., when the question is whether it is decidable if $\al{A}$ itself
is supernilpotent, the answer to Problem~\ref{prb-1}
has been known to be YES by a combination of
results from \cite{hobby-mckenzie, Ke:CMVS, AM:SAHC}
(see Section~\ref{sec-appl1}), provided 
the variety generated by $\al{A}$ is assumed to omit type $\mathbf{1}$.

\begin{prb}
\label{prb-2}
Let $\al{A}$ be a finite algebra (in a finite language) such that $\al{A}$
has a cube term. 
If for every subdirectly irreducible section 
$\al{S}$ of $\al{A}$ the centralizer of the monolith of 
$\al{S}$ is supernilpotent, 
does there exist a polynomial time algorithm for solving the 
Subpower Membership Problem for $\al{A}$?
\end{prb}

In the special case when $\al{A}$ itself is supernilpotent 
(and, without loss of generality, has prime power order), 
the answer has been known to be YES by a result of 
\cite{Ma:SMP}.

Our goal in this paper is to use the composition of two known functors to 
create ``algebras from congruences'', to study 
the resulting (functorial) construction, and show that it preserves
many algebraic properties.  Then we apply these results to prove that
the answer to Problem~\ref{prb-2} is YES, and 
the answer to Problem~\ref{prb-1} is also YES provided 
the variety generated by $\al{A}$ omits type $\mathbf{1}$.
For finite algebras $\al{A}$ not satisfying this assumption,
Problem~\ref{prb-1} remains open even for the special case when
$\alpha$ is the total congruence of $\al{A}$.

To describe the idea of the construction we use, let us look at a
single algebra $\al{A}$ and a congruence $\alpha$ of $\al{A}$
with finitely many blocks. The first step of the construction 
yields a multisorted algebra
in which the sorts are the $\alpha$-blocks, and the multisorted
operations are the restrictions of the operations of $\al{A}$ to the sorts
in all possible ways.
The second step of the construction
takes this multisorted algebra as its input, and yields a single-sorted algebra
on the product of the sorts, where the operations are the diagonal operation
of the product of the sorts and some totally defined operations that faithfully
represent the operations of the multisorted algebra.

By restricting to pairs $(\al{A},\alpha)$ where
$\alpha$ is the kernel of a homomorphism $\al{A}\to\al{I}$ of $\al{A}$
into a fixed finite algebra $\al{I}$, the first step of the construction 
can be made into a functor $\frak{M}$  
from a category of algebras over $\al{I}$ (cf.~\cite[Ch.~II, Sec.~6]{maclane})
into a category of multisorted algebras, while the second step of the construction
can be made into a functor $\frak{P}$ from a category of
multisorted algebras to a category of (single-sorted) algebras.
We will introduce these functors in Section~\ref{sec-constr}, and will
prove that their composition $\frak{C}:=\frak{P}\circ\frak{M}$ ---
which is our construction of ``algebras from congruences'' ---
is a categorical equivalence, provided we restrict to the cases
where the homomorphism $\al{A}\to\al{I}$ is onto, that is, the sorts
of the corresponding multisorted algebras are nonempty.

The functors $\frak{M}$ and $\frak{P}$ were introduced in the 1980's by 
Novotn\'y~\cite{novotny} and Gardner~\cite{gardner}, respectively.
A precursors of $\frak{P}$ appears in \cite{barr} (see also the last section
of \cite{gog-mes}).
Freese and McKenzie~\cite{freese-mckenzie} used 
a variant of the construction provided by the functor
$\frak{C}$
to associate modules (over appropriate rings) to 
abelian congruences of algebras in a congruence modular variety,
which has played a prominent role in commutator theory for over three 
decades.
The constructions afforded by the functors $\frak{M}$ and $\frak{P}$
were also applied by McKenzie and Valeriote~\cite{mckenzie-valeriote}
to uncover the structure of decidable, strongly abelian, locally finite
varieties.
In the 1990's, VanderWerf~\cite{VanderWerf} 
used a functor similar to $\frak{M}$ for his
construction of a ``derived algebra'' (a multi-sorted algebra), a
functor which is essentially the same as $\frak{P}$ for his construction
called ``consolidation'', and their composition $\frak{C}$, 
to generalize the Krohn--Rhodes wreath
decomposition theory of finite transformation semigroups to arbitrary
finite algebras.
More recently, the functor $\frak{P}$ was applied 
by Mu\v{c}ka, Romanowska, and Smith~\cite{mucka-r-s} 
to find good sets of explicit defining identities and quasi-identities 
for the variety of single-sorted algebras equivalent to
the class of all multisorted algebras of a given language 
where either all sorts are nonempty or all sorts are empty.

In most applications of the categorical equivalence $\frak{C}$, including
those mentioned in the preceding paragraph, a central role is played by the
algebraic properties preserved by $\frak{C}$. 
To answer the questions in Problems~\ref{prb-1}--\ref{prb-2} we will use
a wide array of these properties. Since many of these results have not
been published in the literature in the generality we need them, 
we devote Section~3 to a survey of the algebraic properties of $\frak{C}$. 
To describe the properties we consider,
it will be convenient to introduce a notation for the $\frak{C}$-image
of an algebra $\al{A}$ (over $\al{I}$). 
In this section, we will use the informal notation
$\al{A}^{\frak{C}}$. (The precise definition of $\frak{C}$ and the notation that 
goes along with it, and is used outside this section, 
can be found in Corollary~\ref{cor-functorC}.)

For congruences,
we prove in Section~\ref{sec-functorC} 
that for every algebra $\al{A}$ and surjective homomorphism
$\al{A}\to\al{I}$ with kernel $\alpha$, 
the functor $\frak{C}$ yields an isomorphism 
$I_{\al{A}}(0,\alpha)\to\Con(\al{A^{\frak{C}}})$
between the interval
$I_{\al{A}}(0,\alpha) = \{ \beta\in\Con(\al{A}) : 0\leq\beta\leq\alpha \}$ of the congruence lattice of $\al{A}$
and the whole congruence lattice $\Con(\al{A}^{\frak{C}})$ 
of $\al{A}^{\frak{C}}$ (see Corollary~\ref{cor-congr}).
Moreover, we show that this isomorphism 
preserves commutators and higher arity commutators
(see Theorem~\ref{thm-comm}). 
In particular,
$\alpha$ is a supernilpotent congruence of $\al{A}$ iff 
$\al{A}^{\frak{C}}$ is a supernilpotent algebra, 
and the degrees of supernilpotence are the same.
In addition, we establish that the functor
$\frak{C}$ preserves common finiteness conditions. For example,
$\al{A}$ is finite, finitely generated,
finitely presented, or
residually finite, respectively, iff $\al{A}^{\frak{C}}$ is
(see
  Theorem~\ref{thm-subprod}(4) and
  Corollaries~\ref{cor-presentations}, \ref{cor-quotient}(3)).
We also clarify the relationship between the clones of 
$\al{A}$ and $\al{A}^{\frak{C}}$, by explicitly 
describing how one can obtain the clone of term
operations and the clone of polynomial operations of $\al{A}^{\frak{C}}$ 
from the corresponding clone of $\al{A}$
(see Corollaries~\ref{cor-termops}(1) and \ref{cor-poly}). 
An important consequence of this relationship 
between the clones of term operations of $\al{A}$ and $\al{A}^{\frak{C}}$
is that 
the variety generated by $\al{A}^{\frak{C}}$ satisfies all idempotent
Maltsev conditions that hold in the variety generated by $\al{A}$
(cf.~Corollary~\ref{cor-clone-idem}).
Finally, we use the relationship between the clones of polynomial operations
of $\al{A}$ and $\al{A}^{\frak{C}}$ to 
prove that for finite $\al{A}$,  
the tame congruence theoretic types of covering pairs are preserved
by the isomorphism 
$I_{\al{A}}(0,\alpha)\to\Con(\al{A^{\frak{C}}})$ mentioned above
(see Theorem~\ref{thm-tct}).
Variants or particular cases of these results
appear in the
applications of $\frak{C}$ in 
\cite{freese-mckenzie, mckenzie-valeriote, VanderWerf}
mentioned above (see, e.g.,
\cite[Theorem~9.9]{freese-mckenzie},
\cite[Chapter~11]{mckenzie-valeriote}, 
\cite[Lemmas~2.24, 5.2]{VanderWerf}).

Sections~\ref{sec-appl1} and \ref{sec-appl2} answer Problem~\ref{prb-1}
(for varieties omitting type {\bf 1})
and Problem~\ref{prb-2}, respectively, in the affirmative.
In Section~\ref{sec-appl1} (Theorem~\ref{thm-sn}),
for finite algebras in a variety omitting type {\bf 1},
we give a characterization
for a congruence $\alpha$ to be supernilpotent,
which
immediately proves that supernilpotence is decidable.
Recently, essentially
the same characterization of supernilpotent congruences (in congruence modular varieties) has
also been observed
and used in algorithms for circuit satisfiability problems by Idziak,
 Kawa{\l}ek and Kraczkowski~\cite{IKK}.
In Section~\ref{sec-appl2} the theorems we prove apply to
finite sets of finite algebras --- not just single finite algebras.
First we show that 
the subpower membership problem for a finite set of finite 
algebras with a cube term is polynomial time reducible to its subproblem 
where the algebras are subdirectly irreducible and have central monoliths
(see Theorem~\ref{thm-dcentr-reduc}(3)).
From this we derive a common generalization of the main results of 
\cite{BMS:SMP} and \cite{Ma:SMP} (see Theorem~\ref{thm-P-snilp-centrs}),
which answers Problem~\ref{prb-2}.

\section{Three equivalent categories}
\label{sec-constr}

Throughout this paper $\lngF$ will denote an algebraic language.
So, for each symbol $f\in\lngF$ there is an associated natural number,
$\ar(f)\ge0$, the \emph{arity} of $f$, which indicates that in every
$\lngF$-algebra $\al{A}=(A;\lngF)$ each symbol $f$ is interpreted as
an $\ar(f)$-ary operation $f^{\al{A}}\colon A^{\ar(f)}\to A$.
To simplify notation, we will usually drop the superscript $\al{A}$
from the operations $f^{\al{A}}$.
If $\lngF$ contains no nullary symbols, we will allow the universe
$A$ of an $\lngF$-algebra to be empty.

Analogously, $\lngG$ will always denote a multisorted algebraic language.
There is an associated nonempty set, $J$, indexing the sorts,
and for each symbol $g\in\lngG$ there is an associated natural number
$\ar(g)\ge0$, the \emph{arity} of $g$, and an associated pair
$(\mathbf{j},j')$, where $\al{j}=(j_1,\dots,j_{\ar(g)})\in J^{\ar(g)}$ is the
\emph{sequence of input sorts} of $g$ and $j'\in J$ is
the \emph{output sort} of $g$.
These data indicate that in every
multisorted $\lngG$-algebra $\al{M}=\bigl((M^{(j)})_{j\in J};\lngG\bigr)$
each symbol $g$ is interpreted as
a multisorted operation
$g^{\al{M}}\colon M^{(j_1)}\times\dots\times M^{(j_{\ar(g)})}\to M^{(j')}$.
To simplify notation, we will usually drop the superscript $\al{M}$
from the multisorted operations $g^{\al{M}}$.
For each $j'\in J$, if $\lngG$ contains no nullary symbol with
output sort $j'$, 
we will allow the universe $M^{(j')}$ of a multisorted
$\lngG$-algebra to be empty.

Next we introduce several categories.

\begin{df}
  \label{df-cat1}
  \begin{enumerate}
  \item[(1)]
  $\Alg(\lngF)$ will denote the category of all $\lngF$-algebras; that is,
  the objects are all $\lngF$-algebras, and the
  morphisms are all homomorphisms between $\lngF$-algebras.
  \item[(2)]
  Given a nonempty $\lngF$-algebra $\al{I}=(I;\mathcal{F})$, 
  the category
  $\Comma$ of all $\lngF$-algebras over $\al{I}$ is defined
  (cf.~\cite[Ch.~II, Sec.~6]{maclane}) to be the category in which
  \begin{itemize}
  \item
    the objects are all pairs $(\al{A},\chi)$ where $\al{A}$ is an
    $\lngF$-algebra and $\chi$ is a homomorphism
    $\al{A}\to\al{I}$ %$\al{A}\stackrel{\chi}{\to}\al{I}$
    of $\lngF$-algebras, while
  \item
    a morphism between two such objects $(\al{A},\chi)$ and $(\al{B},\xi)$
    is a homomorphism $\psi\colon\al{A}\to\al{B}$ of $\lngF$-algebras
    such that $\chi=\xi\circ \psi$.
  \end{itemize}
  \item[(3)]
  $\MSAlg(\lngG)$ will denote the category of all multisorted
  $\lngG$-algebras; that is,
  the objects are all multisorted $\lngG$-algebras, and the
  morphisms are all homomorphisms between multisorted $\lngG$-algebras.
  Recall that a homomorphism $\zeta\colon \al{M}\to\al{N}$
  between multisorted $\lngG$-algebras
  $\al{M}=\bigl((M^{(j)})_{j\in J};\lngG\bigr)$  and
  $\al{N}=\bigl((N^{(j)})_{j\in J};\lngG\bigr)$
  is a $J$-tuple
  $\zeta=(\zeta^{(j)})_{j\in J}$ of functions $\zeta^{(j)}\colon M^{(j)}\to N^{(j)}$
  such that $\zeta$ preserves all multisorted operations $g\in\lngG$.
  \end{enumerate}
\end{df}

The next statement introduces a functor 
$\frak{M}\colon\Comma\to\MSAlg(\lngG)$ for an appropriately defined
multisorted language $\lngG$, which is analogous to a functor considered
in \cite{novotny}.

\begin{prp}
  \label{prp-functorM}
  Given an algebraic language $\mathcal{F}$ and a nonempty $\mathcal{F}$-algebra
  $\al{I}=(I,\mathcal{F})$, let $\lngF_{\al{I}}$ be
  the multisorted language $\lngF_{\al{I}}$ defined as follows:
  \begin{itemize}
  \item
    the sorts are indexed by the set $I$, and
  \item
    the operation symbols of the multisorted language
    $\mathcal{F}_{\al{I}}$ are all symbols
    \begin{multline*}
      \qquad\quad
      f_{\wec{i}}\ \
      \text{with } f\in\mathcal{F} \text{ and } \wec{i}\in I^{\ar(f)},\\
      \text{\ \ where $\ar(f_{\wec{i}})=\ar(f)$,\ \ 
        $\wec{i}$ is the sequence of input sorts,}\\
      \text{and $f(\wec{i})$ (computed in $\al{I}$) is the output sort.}
      \quad
    \end{multline*}
  \end{itemize}
  The following functions define a functor
  $\frak{M}\colon\Comma\to\MSAlg(\lngF_{\al{I}})$:
  \begin{itemize}
  \item
  $\frak{M}$ assigns to every
  object $(\al{A},\chi)$ of $\Comma$ the multisorted algebra
  \[
  \frak{M}(\al{A},\chi):=\bigl((\chi^{-1}(i))_{i\in I};\lngF_{\al{I}}\bigr)
  \]
  where the sorts are the congruence classes of the kernel of $\chi$
  (indexed by $\chi$), and for each
  $f\in\lngF$ and for every tuple
  $\wec{i}=(i_1,\dots,i_{\ar(f)})\in I^{\ar(f)}$, the operation $f_{\wec{i}}$ is the
  restriction of the operation $f$ of $\al{A}$ to the set
  $\chi^{-1}(i_1)\times\dots\times\chi^{-1}(i_{\ar(f)})$.
  \item
  $\frak{M}$ assigns to every morphism $\psi\colon (\al{A},\chi)\to(\al{B},\xi)$
  of $\Comma$ the multisorted homomorphism
  \[
  \frak{M}(\psi):=\bigl(\psi\us{i}\bigr)_{i\in I}\colon
  \frak{M}(\al{A},\chi)\to\frak{M}(\al{B},\xi)
  \]
  where for each $i\in I$, $\psi\us{i}$ is the restriction of $\psi$ to
  $\chi^{-1}(i)$, which is a function $\chi^{-1}(i)\to\xi^{-1}(i)$.
  \end{itemize}
\end{prp}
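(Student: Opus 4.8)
The plan is to verify the functoriality of $\frak{M}$ directly from the definitions, checking the two things a functor must do: send objects to objects and morphisms to morphisms in a way that is compatible with identities and composition. Since the excerpt has already laid out all the data of $\frak{M}$, the proof is essentially a bookkeeping exercise, and I would organize it in the following steps.

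First I would check that $\frak{M}(\al{A},\chi)$ is a well-defined $\lngF_{\al{I}}$-algebra. The underlying family of sorts is $(\chi^{-1}(i))_{i\in I}$, and for a symbol $f_{\wec{i}}$ with $\wec{i}=(i_1,\dots,i_n)\in I^n$ (where $n=\ar(f)$) I must confirm that the restriction of $f^{\al{A}}$ to $\chi^{-1}(i_1)\times\dots\times\chi^{-1}(i_n)$ actually lands in the sort $\chi^{-1}(f(\wec{i}))$ prescribed as the output sort. This is exactly where the hypothesis that $\chi$ is a homomorphism $\al{A}\to\al{I}$ is used: for $a_k\in\chi^{-1}(i_k)$ we have $\chi\bigl(f^{\al{A}}(a_1,\dots,a_n)\bigr)=f^{\al{I}}\bigl(\chi(a_1),\dots,\chi(a_n)\bigr)=f^{\al{I}}(i_1,\dots,i_n)=f(\wec{i})$, so $f^{\al{A}}(a_1,\dots,a_n)\in\chi^{-1}(f(\wec{i}))$, as required. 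I would also note the edge case: if some $\chi^{-1}(i_k)$ is empty then $f_{\wec{i}}$ is the empty function, which is harmless; and since $\al{I}$ is nonempty but $\al{A}$ need not be (if $\lngF$ has no nullary symbols), the sorts $\chi^{-1}(i)$ may individually be empty, which is permitted by the conventions set up for $\MSAlg(\lngG)$.

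Next I would check that $\frak{M}(\psi)$ is a well-defined morphism in $\MSAlg(\lngF_{\al{I}})$ whenever $\psi\colon(\al{A},\chi)\to(\al{B},\xi)$ is a morphism in $\Comma$. The defining condition $\chi=\xi\circ\psi$ guarantees that for $a\in\chi^{-1}(i)$ we have $\xi(\psi(a))=\chi(a)=i$, so $\psi(a)\in\xi^{-1}(i)$; hence each restriction $\psi\us{i}\colon\chi^{-1}(i)\to\xi^{-1}(i)$ is well-defined, and $\frak{M}(\psi)=(\psi\us{i})_{i\in I}$ is an $I$-tuple of the correct shape. Then I must verify that this tuple preserves every multisorted operation $f_{\wec{i}}$: for $\wec{i}=(i_1,\dots,i_n)$ and $a_k\in\chi^{-1}(i_k)$,
\[
\psi\us{f(\wec{i})}\bigl(f_{\wec{i}}^{\frak{M}(\al{A},\chi)}(a_1,\dots,a_n)\bigr)
=\psi\bigl(f^{\al{A}}(a_1,\dots,a_n)\bigr)
=f^{\al{B}}\bigl(\psi(a_1),\dots,\psi(a_n)\bigr)
=f_{\wec{i}}^{\frak{M}(\al{B},\xi)}\bigl(\psi\us{i_1}(a_1),\dots,\psi\us{i_n}(a_n)\bigr),
\]
using in the middle that $\psi$ is an $\lngF$-homomorphism. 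Finally, the functor axioms: $\frak{M}(\id_{(\al{A},\chi)})$ is the tuple of restrictions of $\id_{\al{A}}$, which is the identity morphism of $\frak{M}(\al{A},\chi)$; and for composable morphisms $\psi\colon(\al{A},\chi)\to(\al{B},\xi)$ and $\psi'\colon(\al{B},\xi)\to(\al{D},\eta)$, restricting $\psi'\circ\psi$ to $\chi^{-1}(i)$ equals $(\psi')\us{i}\circ\psi\us{i}$ because $\psi$ maps $\chi^{-1}(i)$ into $\xi^{-1}(i)$ (where $(\psi')\us{i}$ is defined), so $\frak{M}(\psi'\circ\psi)=\frak{M}(\psi')\circ\frak{M}(\psi)$ sortwise.

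There is no real obstacle here: every step is a direct unwinding of definitions, and the only ``content'' is the single computation showing the output-sort assignment in $\lngF_{\al{I}}$ is consistent, which is forced by $\chi$ being a homomorphism. The mild care-points worth flagging explicitly in the write-up are (i) the possibility of empty sorts and the convention that permits them, and (ii) the observation that the restriction of a composite equals the composite of restrictions precisely because of the codomain constraints imposed by $\chi=\xi\circ\psi$; handling these cleanly is all that separates a correct proof from a sloppy one.
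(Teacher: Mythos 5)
The paper states Proposition~\ref{prp-functorM} without proof, treating the verification that $\frak{M}$ is a functor as a routine unwinding of definitions; your write-up supplies exactly that routine verification, correctly. You check well-definedness of the object map (the key point being that $\chi$ a homomorphism forces $f^{\al{A}}$ to carry $\prod_k\chi^{-1}(i_k)$ into $\chi^{-1}(f^{\al{I}}(\wec{i}))$), well-definedness of the morphism map (the key point being that $\chi=\xi\circ\psi$ forces $\psi(\chi^{-1}(i))\subseteq\xi^{-1}(i)$), preservation of the multisorted operations, and the identity/composition axioms — all of which are stated accurately, and the observations about empty sorts are consistent with the paper's conventions on allowing empty universes when no nullary symbol targets the given sort. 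This is the proof the authors had in mind, so there is nothing to contrast; your proposal is correct and complete.
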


\begin{df}
  \label{df-cat2}
  \begin{enumerate}
  \item[(1)]
  Given a nonempty $\lngF$-algebra $\al{I}=(I;\mathcal{F})$,
  $\comma$ will be our notation for 
  the full subcategory of $\Comma$,
  which consists of all objects $(\al{A},\chi)$ such that $\chi$ is onto.
  \item[(2)]
  $\MSAlg^\Box(\lngG)$ will denote
  the full subcategory of $\MSAlg(\lngG)$,
  which consists of all multisorted $\lngG$-algebras
  $\al{M}=\bigl((M^{(j)})_{j\in J};\lngG\bigr)$ whose sorts
  $M^{(j)}$ ($j\in J$) are pairwise disjoint.
  \item[(3)]
  $\MSAlg^+(\lngG)$ will stand for
  the full subcategory of $\MSAlg(\lngG)$,
  which consists of all multisorted $\lngG$-algebras
  $\al{M}=\bigl((M^{(j)})_{j\in J};\lngG\bigr)$ whose sorts
  $M^{(j)}$ ($j\in J$) are all nonempty.
  \item[(4)]
    Finally, $\MSAlg^\boxplus(\lngG)$ will be our notation for the full
    subcategory of $\MSAlg(\lngG)$ whose objects are the multisorted
    $\lngG$-algebras that belong to both
    $\MSAlg^\Box(\lngG)$ and $\MSAlg^+(\lngG)$.
  \end{enumerate}
\end{df}

\setlength{\unitlength}{1truemm}
\begin{picture}(100,45)
\put(5,0){%
\begin{tikzpicture}[scale=.35]
  \node at (0,10) {$\MSAlg(\lngG)$};
  \node at (-12,5) {(disjoint sorts) $\MSAlg^\Box(\lngG)$};
  \node at (13,5) {$\MSAlg^+(\lngG)$ (nonempty sorts)};
  \node at (0,0) {$\MSAlg^\boxplus(\lngG)$};
  \node at (0,-1.4) {${}=\MSAlg^\Box(\lngG)\cap\MSAlg^+(\lngG)$};
  \draw[line width = 1pt] (-1,9) -- (-9,6);
  \draw[line width = 1pt] (1,9) -- (9,6);
  \draw[line width = 1pt] (1,1) -- (9,4);
  \draw[line width = 1pt] (-1,1) -- (-9,4);
\end{tikzpicture}
}
\end{picture}

\begin{thm}
  \label{thm-functorM}
    For any algebraic language $\mathcal{F}$ and any nonempty
    $\mathcal{F}$-algebra
  $\al{I}=(I,\mathcal{F})$, the category $\Comma$ of $\lngF$-algebras over 
  $\al{I}$ is equivalent to
  the category $\MSAlg(\lngF_{\al{I}})$ of multisorted $\lngF_{\al{I}}$-algebras.
  The full subcategories $\comma$ and\break $\MSAlg^+(\lngF_{\al{I}})$
  of these categories are also equivalent.

  In more detail, we have the following.
  \begin{enumerate}
  \item[{\rm(1)}]
    The functor $\frak{M}$ described in Proposition~\ref{prp-functorM}
    maps onto the full subcategory
    $\MSAlg^\Box(\lngF_{\al{I}})$ of $\MSAlg(\lngF_{\al{I}})$, and yields
    (by changing the target category)
    an isomorphism
    \[
    \frak{M}\colon\Comma\to\MSAlg^\Box(\lngF_{\al{I}}).
    \]
   \item[{\rm(2)}]
    $\frak{M}$ restricts to an isomorphism between the
    full subcategory
    $\comma$ of $\Comma$ and the full subcategory 
    $\MSAlg^\boxplus(\lngF_{\al{I}})$
    of $\MSAlg^\Box(\lngF_{\al{I}})$.
  \end{enumerate}
\end{thm}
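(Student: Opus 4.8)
The plan is to construct an explicit inverse functor and verify that $\frak{M}$ and this inverse are mutually inverse on objects and morphisms, which immediately gives that $\frak{M}$ is an isomorphism of categories onto its image, and then identify that image precisely. First I would define, on a multisorted $\lngF_{\al{I}}$-algebra $\al{M}=\bigl((M^{(i)})_{i\in I};\lngF_{\al{I}}\bigr)$ with \emph{pairwise disjoint} sorts, a single-sorted $\lngF$-algebra $\al{A}$ together with a homomorphism $\chi\colon\al{A}\to\al{I}$: the universe is $A=\bigsqcup_{i\in I}M^{(i)}$ (an honest disjoint union since the sorts are disjoint), $\chi$ sends every element of $M^{(i)}$ to $i$, and for $f\in\lngF$ of arity $n$ and $a_1,\dots,a_n\in A$ with $a_k\in M^{(i_k)}$ we set $f^{\al{A}}(a_1,\dots,a_n):=f_{\wec{i}}^{\al{M}}(a_1,\dots,a_n)$, where $\wec{i}=(i_1,\dots,i_n)$; by definition of $\lngF_{\al{I}}$ the output sort is $f(\wec{i})$, so $\chi\bigl(f^{\al{A}}(\bar a)\bigr)=f(\wec i)=f^{\al{I}}(\chi a_1,\dots,\chi a_n)$, i.e.\ $\chi$ is a homomorphism, so $(\al{A},\chi)$ is an object of $\Comma$. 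On a multisorted homomorphism $\zeta=(\zeta^{(i)})_{i\in I}$ one takes $\psi:=\bigsqcup_i\zeta^{(i)}$, which is well defined because the domains are disjoint, is an $\lngF$-homomorphism because each $\zeta^{(i)}$ preserves the relevant $f_{\wec i}$, and satisfies $\chi=\xi\circ\psi$ by construction. Call this assignment $\frak{N}$.

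Next I would check $\frak{N}\circ\frak{M}=\id$ and $\frak{M}\circ\frak{N}=\id$ on the category $\Comma$ and on $\MSAlg^\Box(\lngF_{\al{I}})$ respectively. For an object $(\al{A},\chi)$ of $\Comma$: the sorts of $\frak{M}(\al{A},\chi)$ are the fibres $\chi^{-1}(i)$, which are automatically pairwise disjoint, so $\frak{M}$ does land in $\MSAlg^\Box$; reassembling them gives back $\bigsqcup_i\chi^{-1}(i)=A$ with the original $\chi$, and the operation $f^{\al{A}}$ is recovered from its restrictions $f_{\wec i}$ since those restrictions partition $A^n$ according to the sorts of the arguments — this is the one genuinely content-bearing point, and it is precisely the observation that an operation on $A$ is the same data as the family of its restrictions to all products of blocks. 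Conversely, for $\al{M}\in\MSAlg^\Box(\lngF_{\al{I}})$, disjointness of the sorts is exactly what is needed for $A=\bigsqcup_i M^{(i)}$ to contain each $M^{(i)}$ as the genuine fibre $\chi^{-1}(i)$, so $\frak{M}(\frak{N}(\al{M}))$ has sort $i$ equal to $M^{(i)}$ on the nose and operation $f_{\wec i}$ equal to the restriction of $f^{\al{A}}$ to $\prod_k M^{(i_k)}$, which is $f_{\wec i}^{\al{M}}$ by definition of $\frak{N}$. The morphism identities are verified the same way: $\frak{M}$ sends $\psi$ to the family of its restrictions to the fibres, and $\frak{N}$ reassembles a family into its disjoint union, and these are inverse operations on functions compatible with $\chi$. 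This establishes part~(1): $\frak{M}$ is a bijective-on-objects, bijective-on-morphisms functor onto $\MSAlg^\Box(\lngF_{\al{I}})$ with inverse $\frak{N}$, hence an isomorphism of categories, and since $\MSAlg^\Box$ is a full subcategory an isomorphism is in particular an equivalence $\Comma\simeq\MSAlg^\Box(\lngF_{\al{I}})\subseteq\MSAlg(\lngF_{\al{I}})$, and one checks $\MSAlg^\Box$ is equivalent to the whole $\MSAlg(\lngF_{\al{I}})$ by noting every multisorted algebra is isomorphic to one with disjoint sorts (replace $M^{(i)}$ by $M^{(i)}\times\{i\}$).

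For part~(2) I would observe that under the bijection $\frak{M}$, the condition that $\chi$ be onto corresponds exactly to the condition that every fibre $\chi^{-1}(i)$ be nonempty, i.e.\ that every sort of $\frak{M}(\al{A},\chi)$ be nonempty; thus $\frak{M}$ restricts to a bijection between the objects of $\comma$ and the objects of $\MSAlg(\lngF_{\al{I}})$ lying in both $\MSAlg^\Box$ and $\MSAlg^+$, which is $\MSAlg^\boxplus(\lngF_{\al{I}})$ by Definition~\ref{df-cat2}(4). Since $\comma$ and $\MSAlg^\boxplus(\lngF_{\al{I}})$ are full subcategories of $\Comma$ and $\MSAlg^\Box(\lngF_{\al{I}})$ respectively, and $\frak{M}$ is already an isomorphism on the larger categories, it restricts to an isomorphism between these full subcategories, which combined with the remark above gives the claimed equivalence $\comma\simeq\MSAlg^+(\lngF_{\al{I}})$. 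The main obstacle, such as it is, is bookkeeping: being careful that ``disjoint union'' is literally a union of disjoint sets (so that no identification is needed and $\frak{N}$ really is a two-sided inverse, not just a quasi-inverse), and cleanly separating the strict isomorphism statement onto $\MSAlg^\Box$ from the mere equivalence onto all of $\MSAlg(\lngF_{\al{I}})$; the algebraic content — that operations on a set with a partition are the same thing as compatible families of their restrictions to the blocks — is routine once stated correctly.
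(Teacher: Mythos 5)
Your proof is correct and follows essentially the same route as the paper: both construct the explicit inverse functor (your $\frak{N}$, the paper's $\frak{M}^{-1}$) by taking the disjoint union of the sorts with $\chi$ reading off the sort index, verify it is a two-sided inverse to $\frak{M}$ on $\MSAlg^\Box(\lngF_{\al{I}})$, and then deduce the equivalence with all of $\MSAlg(\lngF_{\al{I}})$ from the fact that every multisorted algebra is isomorphic to one with pairwise disjoint sorts. The only difference is cosmetic: the paper pauses to note explicitly that the set of indices of nonempty sorts is a subuniverse of $\al{I}$ (which keeps the bookkeeping around possibly-empty sorts and nullary symbols tidy), whereas you leave that implicit, and you supply the concrete gadget $M^{(i)}\times\{i\}$ for disjointifying sorts, which the paper takes for granted.
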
  

\begin{proof}
First we prove statement (1) by showing that the functor
\[
\frak{M}^{-1}\colon\MSAlg^\Box(\lngF_{\al{I}})\to\Comma
\]
defined below is an inverse of $\frak{M}$:
    \begin{itemize}
  \item
  $\frak{M}^{-1}$ assigns to every
    multisorted $\lngF_{\al{I}}$-algebra
    $\al{M}=\bigl((M\us{i})_{i\in I};\lngF_{\al{I}}\bigr)$
    in $\MSAlg^\Box(\lngF_{\al{I}})$ the pair 
    \[
    \frak{M}^{-1}(\al{M}):=
    \left(\Bigl(\bigcup_{i\in I}M\us{i};\lngF\Bigr),\chi\right)
    \]
    where for each symbol $f\in\lngF$, the operation $f$ of the
    $\lngF$-algebra $\al{A}=\bigl(\bigcup_{i\in I}M\us{i};\lngF\bigr)$ is
    the union of the multisorted operations
    $f_{\al{i}}$ for all $\al{i}\in I^{\ar(f)}$, and
    the homomorphism $\chi\colon\al{A}\to\al{I}$ sends each
    element $a\in\bigcup_{i\in I}M\us{i}$ to the unique $i\in I$ with
    $a\in M\us{i}$.
  \item
    $\frak{M}^{-1}$ assigns to every homomorphism
    $\zeta=(\zeta\us{i})_{i\in I}\colon\al{M}\to\al{N}$
    the morphism 
    \[
    \frak{M}^{-1}(\zeta):=\bigcup_{i\in I}\zeta\us{i}\colon
    \frak{M}^{-1}(\al{M}) \to \frak{M}^{-1}(\al{N})
    \]
    in $\Comma$.
  \end{itemize}
Let $\al{M}=\bigl((M\us{i})_{i\in I};\lngF_{\al{I}}\bigr)$ be a
multisorted $\lngF_{\al{I}}$-algebra in $\MSAlg^\Box(\lngF_{\al{I}})$, and let
$J:=\{j\in J:M\us{j}\not=\emptyset\}$.
Notice first that $J$ is a subuniverse of $\al{I}$; indeed, if
$f\in\lngF$ and $\al{i}=(i_1,\dots,i_{\ar(f)})\in J^{\ar(f)}$, then
$\al{M}$ has an operation $f_{\al{i}}$ with nonempty domain 
$M\us{i_1}\times\dots\times M\us{i_{\ar(f)}}$
and with codomain $M\us{f(\al{i})}$,
so $M\us{f(\al{i})}$ must be nonempty, i.e., $f(\al{i})\in J$.
It follows also that for every tuple 
$\al{i}\in I^{\ar{f}}\setminus J^{\ar{f}}$, the multisorted operation
$f_{\al{i}}$ of $\al{M}$ is the empty function.
Thus, the algebra $\al{A}=\bigl(\bigcup_{i\in I}M\us{i};\lngF\bigr)$ in
$\frak{M}^{-1}(\al{M})$ described above is indeed an $\lngF$-algebra,
and $\chi\colon\al{A}\to\al{I}$ is indeed an $\lngF$-algebra homomorphism.  
Thus, the object function of $\frak{M}^{-1}$ is well-defined.

To check the analogous statement for the morphisms,
let $\zeta=(\zeta\us{i})_{i\in I}\colon\al{M}\to\al{N}$ be a homomorphism in
$\MSAlg(\lngF_{\al{I}})$, and let $\frak{M}^{-1}(\al{M})=(\al{A},\chi)$
and $\frak{M}^{-1}(\al{N})=(\al{B},\xi)$. 
It is clear from the definitions
of $\al{A}$ and $\al{B}$ that $\bigcup_{i\in I}\zeta\us{i}$ is a homomorphism
$\al{A}\to\al{B}$. We also have $\xi\circ\bigcup_{i\in I}\zeta\us{i}=\chi$,
for the following reason:
for every $a\in A$ there is a unique $i\in I$ with $a\in M\us{i}$, so
$\chi(a)=i$ and $M\us{i}\not=\emptyset$; 
since $\zeta\us{i}\colon M\us{i}\to N\us{i}$, 
we also have $N\us{i}\not=\emptyset$,
whence $(\xi\circ\bigcup_{i\in I}\zeta\us{i})(a)=\xi(\zeta\us{i}(a))=i$.
This shows that $\frak{M}^{-1}(\zeta)=\bigcup_{i\in I}\zeta\us{i}$ is indeed 
a morphism $\frak{M}^{-1}(\al{M})\to\frak{M}^{-1}(\al{N})$ in $\Comma$.  

To complete the proof of statement~(1), one can easily verify
from the definitions that $\frak{M}^{-1}$ is a functor such that 
$\frak{M}\circ\frak{M}^{-1}$ is the identity functor on $\MSAlg(\lngF_{\al{I}})$
and
$\frak{M}^{-1}\circ\frak{M}$ is the identity functor on $\Comma$.

Statement (2) follows immediately from statement (1).

Finally,
the first statement of the theorem about the equivalence of the categories
$\Comma$ and $\MSAlg(\lngF_{\al{I}})$ follows from the isomorphism
of $\Comma$ and $\MSAlg^\Box(\lngF_{\al{I}})$ established in (1), and the fact
that every multisorted algebra is isomorphic to one in which the sorts are
pairwise disjoint.
\end{proof}

Next we discuss a variant of the main result of
\cite{gardner}.

\begin{prp}
  \label{prp-functorP}
  Given a multisorted algebraic language $\lngG$ with no nullary symbols
  and finitely many sorts
  indexed by the set $[m]:=\{1,\dots,m\}$ $(m>0)$, let $\hat{\lngG}$
  be the algebraic language whose symbols are
  \begin{itemize}
    \item
      $\diag$ with  $\ar(\diag)=m$, and
    \item
      $\hat{g}$ with $\ar(\hat{g})=\ar(g)$ for all $g\in G$.
  \end{itemize}
  The following functions define a functor
  $\frak{P}\colon\MSAlg(\lngG)\to\Alg(\hat{\lngG})$:
    \begin{itemize}
    \item
      $\frak{P}$ assigns to every multisorted $\lngG$-algebra
      $\al{M}=\bigl((M\us{i})_{i\in[m]};\lngG\bigr)$ the $\hat{\lngG}$-algebra
      \[
      \frak{P}(\al{M}):=(M\us{1}\times\dots\times M\us{m};\hat{\lngG})
      \]
      where
      \begin{itemize}
      \item
        $\diag$ is the \emph{diagonal operation} on the product set
      $M\us{1}\times\dots\times M\us{m}$; that is, for any tuples
      $(a\lsus{j}{i})_{i\in[m]}\in \prod_{i\in[m]}M\us{i}$
      $(j\in[m])$,
      \begin{equation}
      \label{eq-constrd}
      \diag\bigl((a\lsus{1}{i})_{i\in[m]},\dots,(a\lsus{m}{i})_{i\in[m]}\bigr)
      =(a\lsus{1}{1},\dots,a\lsus{m}{m}),
      \end{equation}
      and
    \item
      for every $g\in\lngG$, if the input and output sorts are
      $\wec{i}=(i_1,\dots,i_k)\in[m]^k$ and $i'\in[m]$ (so, $\ar(g)=k$),
      then the $k$-ary operation $\hat{g}$ is the following:
      for any input tuples
      $(a\lsus{j}{i})_{i\in[m]}\in \prod_{i\in[m]}M\us{i}$
      $(j\in[k])$, 
    \begin{multline}
      \label{eq-constrg}
      \qquad\quad
      \hat{g}
      \bigl((a\lsus{1}{i})_{i\in[m]},\dots,(a\lsus{k}{i})_{i\in[m]}\bigr)\\
        =\bigl(a\lsus{1}{1},\dots,a\lsus{1}{i'-1},
          g(a\lsus{1}{i_1},\dots,a\lsus{k}{i_k}),
          a\lsus{1}{i'+1},\dots,a\lsus{1}{m}\bigl).
          \qquad\quad
    \end{multline}
    \end{itemize}
  \item
    $\frak{P}$ assigns to every morphism
    $\zeta=(\zeta\us{i})_{i\in[m]}\colon\al{M}\to\al{N}$ in
    $\MSAlg(\lngG)$,
    where each $\zeta\us{i}$ $(i\in[m])$ is a function $M\us{i}\to N\us{i}$,
    the morphism
    \[
    \frak{P}(\zeta):=\zeta\us{1}\times\dots\times\zeta\us{m}\colon
    \frak{P}(\al{M})\to\frak{P}(\al{N})
    \]
    in $\Alg(\hat{\lngG})$,
    which is a function
    $M\us{1}\times\dots\times M\us{m}\to N\us{1}\times\dots\times N\us{m}$.
%    \hfill$\Box$
  \end{itemize}
\end{prp}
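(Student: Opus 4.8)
The plan is to verify, in turn, that the object assignment of $\frak{P}$ is well defined, that its morphism assignment is well defined, and finally that the two assignments respect identities and composition. None of these steps is deep; the only points requiring a moment's care are the coordinate bookkeeping and the degenerate case of an empty sort.

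\textbf{Object function.} For a multisorted $\lngG$-algebra $\al{M}=\bigl((M\us{i})_{i\in[m]};\lngG\bigr)$ I would first check that the formulas \eqref{eq-constrd} and \eqref{eq-constrg} define total operations on $P:=M\us{1}\times\dots\times M\us{m}$. For $d$ this is immediate from \eqref{eq-constrd}: the $j$-th coordinate of the output is $a\lsus{j}{j}\in M\us{j}$, so the output lies in $P$. For $\hat{g}$ with input sorts $\wec{i}=(i_1,\dots,i_k)$ and output sort $i'$, the $i'$-th coordinate of the right-hand side of \eqref{eq-constrg} is $g(a\lsus{1}{i_1},\dots,a\lsus{k}{i_k})\in M\us{i'}$ since $g\colon M\us{i_1}\times\dots\times M\us{i_k}\to M\us{i'}$ (note $a\lsus{j}{i_j}\in M\us{i_j}$ because $(a\lsus{j}{i})_{i\in[m]}\in P$), while the remaining coordinates are $a\lsus{1}{i}\in M\us{i}$; hence again the output lies in $P$. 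If some $M\us{i}$ is empty then $P=\emptyset$, which (as $\hat{\lngG}$ has no nullary symbols) is a legitimate $\hat{\lngG}$-algebra, and there is nothing more to check.

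\textbf{Morphism function.} Given $\zeta=(\zeta\us{i})_{i\in[m]}\colon\al{M}\to\al{N}$ in $\MSAlg(\lngG)$, I would show that $\frak{P}(\zeta)=\zeta\us{1}\times\dots\times\zeta\us{m}\colon P\to Q:=\prod_{i\in[m]}N\us{i}$ preserves $d$ and every $\hat{g}$. Preservation of $d$ is formal: applying $\zeta\us{1}\times\dots\times\zeta\us{m}$ to either side of \eqref{eq-constrd} yields the tuple $(\zeta\us{1}(a\lsus{1}{1}),\dots,\zeta\us{m}(a\lsus{m}{m}))$. Preservation of $\hat{g}$ reduces coordinatewise to the trivial identity $\zeta\us{i}(a\lsus{1}{i})=\zeta\us{i}(a\lsus{1}{i})$ in the coordinates $i\neq i'$, together with $\zeta\us{i'}\bigl(g(a\lsus{1}{i_1},\dots,a\lsus{k}{i_k})\bigr)=g\bigl(\zeta\us{i_1}(a\lsus{1}{i_1}),\dots,\zeta\us{i_k}(a\lsus{k}{i_k})\bigr)$ in the coordinate $i'$, and the latter is precisely the defining property of a multisorted homomorphism (Definition~\ref{df-cat1}(3)) applied to $g$.

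\textbf{Functoriality and main obstacle.} Finally I would note that $\frak{P}(\id_{\al{M}})=\id_{M\us{1}}\times\dots\times\id_{M\us{m}}=\id_{\frak{P}(\al{M})}$, and that for $\al{M}\labelledarrow{\eta}\al{N}\labelledarrow{\zeta}\al{K}$ one has $\frak{P}(\zeta\circ\eta)=\prod_{i}(\zeta\us{i}\circ\eta\us{i})=\bigl(\prod_{i}\zeta\us{i}\bigr)\circ\bigl(\prod_{i}\eta\us{i}\bigr)=\frak{P}(\zeta)\circ\frak{P}(\eta)$, since tuples of functions compose coordinatewise. This completes the check that $\frak{P}$ is a functor. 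The only genuine content is the single displayed identity in the morphism step, which is just the definition of a multisorted homomorphism, so I expect the main obstacle to be nothing more than keeping the two superscripts in $a\lsus{j}{i}$ straight throughout --- the outer index $j$ selecting the input tuple, the inner index $i$ selecting the sort.
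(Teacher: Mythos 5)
Your proof is correct and is exactly the routine verification the paper leaves implicit (the paper states Proposition~\ref{prp-functorP} without giving a proof). The three checks you perform --- well-definedness of the operations on the product set, preservation of $d$ and $\hat g$ by $\zeta\us{1}\times\dots\times\zeta\us{m}$, and functoriality with respect to identities and composition --- are precisely what is needed, and each is carried out correctly, including the observation that the empty-sort case causes no trouble because $\hat{\lngG}$ has no nullary symbols.
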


As it is remarked in \cite{gardner}, for a fixed $\lngG$,
the class of all isomorphic copies of algebras of the form $\frak{P}(\al{M})$,
as described in Proposition~\ref{prp-functorP},
is a variety $\DvarG$ (including the empty algebra);
$\DvarG$ is defined by the following identities:
\begin{align}
  \diag(x,\dots,x) & {}=x,\label{eq-diag1}\\
  \diag\bigl(\diag(x_{11},\dots,x_{1m}),\diag(x_{21},\dots,x_{2m}),
  \dots,\diag(x_{m1},\dots,x_{mm})\bigr)
   & {}=\diag(x_{11},\dots,x_{mm}),\label{eq-diag2}
\end{align}  
and for every symbol $g\in\lngG$ with input and output sorts
$\al{i}=(i_1,\dots,i_k)\in[m]^k$ and $i'\in[m]$ (so, $\ar(g)=\ar(\hat{g})=k$),
\begin{align}
  \diag_{i'}\bigl(\hat{g}(x_1,\dots,x_k),y\bigr)
& {}=\diag_{i'}\bigl(\hat{g}\bigl(\diag_{i_1}(x_1,z_1),\dots,\diag_{i_k}(x_k,z_k)\bigr),y\bigr),
  \label{eq-g1}\\
  \diag_j\bigl(\hat{g}(x_1,\dots,x_k),y\bigr)
  & {}=\diag_j(x_1,y)\quad\text{for all $j\in[m]\setminus\{i'\}$,\label{eq-g2}}
\end{align}
where $\diag_\ell(x,y)$ is an abbreviation for $\diag(y,\dots,y,x,y,\dots,y)$
with $x$ in the $\ell$-th position.

Briefly, the statement that every algebra $\al{A}=(A;\hat{\lngG})$
in $\DvarG$
is isomorphic to an algebra of the form $\frak{P}(\al{M})$
can be proved as follows.
The identities \eqref{eq-diag1}--\eqref{eq-diag2} imply that
for each $\ell\in[m]$ there exists an equivalence relation $\equiv_\ell$
on $A$ such that for any $a,b\in A$ we have
\begin{equation*}
  a\equiv_\ell b\quad \text{iff}\quad
  a=\diag_\ell(b,a)\quad \text{iff}\quad
  \diag_\ell(a,c)=\diag_\ell(b,c)\ \text{for all $ c\in A$.}
\end{equation*}
Moreover, ${\equiv}_1,\dots,{\equiv}_m$ yield a product decomposition
$A\to A/{\equiv}_1\times\dots\times A/{\equiv_m}$ so that
for each $\ell\in[m]$ and $a\in A$ the elements of $A$ with the same
$\ell$-th coordinate as $a$ are exactly the elements of the form
$\diag_\ell(a,c)$ ($c\in A$).
Thus, the identities \eqref{eq-g1}--\eqref{eq-g2} express that
for any elements $a_1,\dots,a_k\in A$, the $i'$-th coordinate of
$\hat{g}(a_1,\dots,a_k)$ depends only on the coordinates $i_1,\dots,i_k$
of $a_1,\dots,a_k$, respectively, while for $j\in[m]\setminus\{i'\}$,
the $j$-th coordinate of $\hat{g}(a_1,\dots,a_k)$ is the $j$-th
coordinate of $a_1$.

\begin{df}\label{df-cat3}
  \begin{enumerate}
    \item[(1)]
  $\DAlg(\hat{\lngG})$ will denote the full subcategory of $\Alg(\hat{\lngG})$
  whose objects are the $\hat{\lngG}$-algebras of the form $\frak{P}(\al{M})$
  for some multisorted $\lngG$-algebra $\al{M}$.
  \item[(2)]
  We will use the notation
  $\DAlg^+(\hat{\lngG})$ for the full subcategory of $\DAlg(\hat{\lngG})$
  obtained by omitting the empty algebra; that is, the objects of
  $\DAlg^+(\hat{\lngG})$ are exactly the algebras $\frak{P}(\al{M})$
  for which the sorts of $\al{M}$ are all nonempty.
  \item[(3)]
  $\DAlg^\boxplus(\hat{\lngG})$ will be our notation for the
  full subcategory of $\DAlg^+(\hat{\lngG})$
  consisting of those algebras $\frak{P}(\al{M})$
  where the sorts of $\al{M}$ are pairwise disjoint.
  \item[(4)]
  $\mathcal{D}^+(\hat{\lngG})$ will denote the full subcategory of
  $\Alg(\hat{\lngG})$ whose objects are the nonempty members of the variety
  $\DvarG$.
  \end{enumerate}  
\end{df}

\setlength{\unitlength}{1truemm}
\begin{picture}(100,52)
\put(25,0){%
\begin{tikzpicture}[scale=.30]
  \node at (0,10) {$\DvarG$};
  \node at (-10,5) {$\DAlg(\hat{\lngG})$};
  \node at (15,5) {$\mathcal{D}^+(\hat{\lngG})
    =\DvarG\setminus\{(\emptyset;\hat{\lngG})\}$};
  \node at (6.1,0) {$\DAlg^+(\hat{\lngG})
    =\DAlg(\hat{\lngG})\cap\mathcal{D}^+(\hat{\lngG})$};
  \node at (0,-5) {$\DAlg^\boxplus(\hat{\lngG})$};
  \draw[line width = 1pt] (-1,9) -- (-9,6);
  \draw[line width = 1pt] (1,9) -- (9,6);
  \draw[line width = 1pt] (1,1) -- (9,4);
  \draw[line width = 1pt] (-1,1) -- (-9,4);
  \draw[line width = 1pt] (0,-1) -- (0,-4);
\end{tikzpicture}
}
\end{picture}

Clearly,
$\DAlg^+(\hat{\lngG})$ is a full subcategory of
$\mathcal{D}^+(\hat{\lngG})$.

\begin{thm}
  \label{thm-functorP}
For any multisorted algebraic language $\lngG$ with no nullary symbols and
finitely many sorts, the category $\MSAlg^+(\lngG)$ of multisorted
$\lngG$-algebras is equivalent to the category
$\mathcal{D}^+(\hat{\lngG})$ of $\hat{\lngG}$-algebras.

  In more detail, we have the following.
  \begin{enumerate}
  \item[{\rm(1)}]
    The functor $\frak{P}$ described in Proposition~\ref{prp-functorP}
    maps the full subcategory $\MSAlg^+(\lngG)$ of $\MSAlg(\lngG)$ into
    the full subcategory $\DAlg^+(\hat{\lngG})$ of
    $\Alg(\hat{\lngG})$, and yields
    (by changing the domain and target categories)
    an isomorphism
    \[
    \frak{P}\colon\MSAlg^+(\lngG)\to\DAlg^+(\hat{\lngG}).
    \]
   \item[{\rm(2)}]
    $\frak{P}$ restricts to an isomorphism between the
    full subcategory
    $\MSAlg^\boxplus(\lngG)$ of $\MSAlg^+(\lngG)$ and the full subcategory 
    $\DAlg^\boxplus(\hat{\lngG})$
    of $\DAlg^+(\hat{\lngG})$.
  \end{enumerate}
\end{thm}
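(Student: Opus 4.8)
The plan is to imitate the proof of Theorem~\ref{thm-functorM}: construct an explicit inverse functor $\frak{P}^{-1}\colon\DAlg^+(\hat{\lngG})\to\MSAlg^+(\lngG)$, verify that $\frak{P}$ and $\frak{P}^{-1}$ are mutually inverse to obtain statement~(1), and then deduce statement~(2) and the opening equivalence of the theorem by formal arguments. That $\frak{P}$ maps $\MSAlg^+(\lngG)$ into $\DAlg^+(\hat{\lngG})$ and is onto the object class of $\DAlg^+(\hat{\lngG})$ is immediate from the way the latter is defined, so the real content is that $\frak{P}$ is injective on objects and bijective on every hom-set.

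For injectivity on objects, I would show that a multisorted algebra $\al{M}=\bigl((M\us{i})_{i\in[m]};\lngG\bigr)$ all of whose sorts are nonempty can be read off from $\frak{P}(\al{M})$. Its underlying set is literally the product $\prod_{i\in[m]}M\us{i}$, and since each sort other than the $\ell$-th one is nonempty, $M\us{\ell}$ is exactly the set of $\ell$-th coordinates of the elements of $\frak{P}(\al{M})$; moreover, by the identity~\eqref{eq-constrg} defining $\hat{g}$, for $g\in\lngG$ with input sorts $(i_1,\dots,i_k)$ and output sort $i'$ the value $g^{\al{M}}(c_1,\dots,c_k)$ is the $i'$-th coordinate of $\hat{g}^{\frak{P}(\al{M})}(\bar{a}^1,\dots,\bar{a}^k)$ for any tuples $\bar{a}^j$ having $i_j$-th coordinate $c_j$. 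Hence every algebra in $\DAlg^+(\hat{\lngG})$ has a unique preimage $\al{M}$ under $\frak{P}$ lying in $\MSAlg^+(\lngG)$, and we define $\frak{P}^{-1}$ on objects to be the passage to this preimage.

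On morphisms, given a $\hat{\lngG}$-homomorphism $\phi\colon\frak{P}(\al{M})\to\frak{P}(\al{N})$ with $\al{M},\al{N}\in\MSAlg^+(\lngG)$, I would prove that $\phi$ splits as a product $\zeta\us{1}\times\dots\times\zeta\us{m}$ of maps $\zeta\us{\ell}\colon M\us{\ell}\to N\us{\ell}$ arising from a homomorphism $\zeta=(\zeta\us{\ell})_{\ell\in[m]}\colon\al{M}\to\al{N}$, and set $\frak{P}^{-1}(\phi):=\zeta$. The crucial observation is that, since $\phi$ preserves $d$ and $d_\ell$ is a term in $d$, $\phi$ preserves the equivalence relation ``equality of $\ell$-th coordinates'', which is characterized on $\frak{P}(\al{M})$ by $\bar{a}\equiv_\ell\bar{b}\iff\bar{a}=d_\ell(\bar{b},\bar{a})$ (as in the discussion preceding Definition~\ref{df-cat3}); therefore the $\ell$-th coordinate of $\phi(\bar{a})$ depends only on the $\ell$-th coordinate of $\bar{a}$, and this holds for every $\ell\in[m]$, giving the functions $\zeta\us{\ell}$ with $\phi=\prod_{\ell\in[m]}\zeta\us{\ell}$ — here the nonemptiness of all sorts of $\al{M}$ is what guarantees each $\zeta\us{\ell}$ is defined on all of $M\us{\ell}$. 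That $\zeta$ preserves every $g\in\lngG$ then follows by reading off the $i'$-th coordinate in~\eqref{eq-constrg}, using that $\phi$ preserves $\hat{g}$. Routine checks show $\frak{P}^{-1}$ is a functor and that $\frak{P}\circ\frak{P}^{-1}$ and $\frak{P}^{-1}\circ\frak{P}$ are the identity functors, which proves~(1). Statement~(2) follows at once, since $\DAlg^\boxplus(\hat{\lngG})$ is by definition the $\frak{P}$-image of the objects of $\MSAlg^\boxplus(\lngG)$ and both of these are full subcategories. Finally, the opening equivalence $\MSAlg^+(\lngG)\simeq\overline{\DAlg}^{\,+}(\hat{\lngG})$ is obtained by composing the isomorphism of~(1) with the inclusion $\DAlg^+(\hat{\lngG})\hookrightarrow\overline{\DAlg}^{\,+}(\hat{\lngG})$, which is an equivalence because it is fully faithful (a full subcategory) and essentially surjective; essential surjectivity is exactly the assertion, sketched in the remarks following Proposition~\ref{prp-functorP}, that every member of the variety $\DvarG$ is isomorphic to some $\frak{P}(\al{M})$, necessarily with all sorts nonempty whenever the algebra is nonempty.

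The step I expect to be the main obstacle is the surjectivity of $\frak{P}$ on hom-sets — showing that an arbitrary $\hat{\lngG}$-homomorphism between algebras in the image of $\frak{P}$ decomposes as a product of coordinate maps induced by a multisorted homomorphism — together with the careful tracking of nonemptiness of sorts, which is precisely what upgrades $\frak{P}$ from an equivalence to an honest isomorphism onto $\DAlg^+(\hat{\lngG})$ and explains why the domain must be $\MSAlg^+(\lngG)$ rather than all of $\MSAlg(\lngG)$.
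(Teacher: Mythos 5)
Your proof is correct and follows essentially the same approach as the paper: show that $\frak{P}$ is bijective on objects and on every hom-set, with the key step being that preservation of $d$ (via the relations $\equiv_\ell$) forces any homomorphism $\frak{P}(\al{M})\to\frak{P}(\al{N})$ to split as a product of coordinate maps, then pass to the full variety $\DvarG$ by observing that every nonempty member is isomorphic to one in $\DAlg^+(\hat{\lngG})$. The paper is terser — it asserts the splitting without spelling out the $\equiv_\ell$ mechanism — but the underlying argument is identical.
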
  

\begin{proof}
As in the proof of Theorem~\ref{thm-functorM}, the crucial statement is (1), 
because (2) is an immediate consequence of (1), while the first 
statement of the theorem on the equivalence of the categories 
$\MSAlg^+(\lngG)$ 
and 
$\mathcal{D}^+(\hat{\lngG})$
follows from (1) and the fact that 
every algebra in 
$\mathcal{D}^+(\hat{\lngG})$
is isomorphic to one in
$\DAlg^+(\hat{\lngG})$.

To verify statement~(1) it suffices to check that
\begin{enumerate}
\item[(i)]
the object function of $\frak{P}$ is a bijection between
the class of objects of $\MSAlg^+(\lngG)$ and 
the class of objects of $\DAlg^+(\hat{\lngG})$,
\item[(ii)]
the morphism function of $\frak{P}$ is a bijection between
the set of homomorphisms $\al{M}\to\al{N}$ in $\MSAlg^+(\lngG)$
and the set of homomorphisms $\frak{P}(\al{M})\to\frak{P}(\al{N})$ in 
$\DAlg^+(\hat{\lngG})$ for all objects $\al{M},\al{N}$ in 
$\MSAlg^+(\lngG)$, and
\item[(iii)]
the inverses of these object and morphism functions yield a functor
$\DAlg^+(\hat{\lngG})\to\MSAlg^+(\lngG)$.
\end{enumerate}
For (i), the object function of $\frak{P}$ is surjective by the definition
of the category $\DAlg^+(\hat{\lngG})$, and it is injective, because
for any multisorted $\lngG$-algebra $\al{M}$, 
the $\hat{\lngG}$-algebra $\frak{P}(\al{M})$ uniquely determines
$\al{M}$. Indeed, since the underlying set $M\us{1}\times\dots\times M\us{m}$
of $\frak{P}(\al{M})$ is nonempty, this product set determines the sorts
$M\us{1},\dots,M\us{m}$ of $\al{M}$, and for every $g\in\lngG$, the operation
$\hat{g}$ of $\frak{P}(\al{M})$ determines the operation $g$ of $\al{M}$.

For (ii), let us fix two objects $\al{M},\al{N}$ in 
$\MSAlg^+(\lngG)$. It follows easily from the definition 
$\zeta=(\zeta\us{i})_{i\in[m]}\mapsto\frak{P}(\zeta)
=\zeta\us{1}\times\dots\times\zeta\us{m}$
of the morphism function of $\frak{P}$ that
homomorphisms $\al{M}\to\al{N}$ are mapped into homomorphisms
$\frak{P}(\al{M})\to\frak{P}(\al{N})$ injectively.
To see that the morphism function is also surjective, notice that the diagonal
operation $\diag$ of the algebras in $\DAlg^+(\hat{\lngG})$
forces every homomorphism $\frak{P}(\al{M})\to\frak{P}(\al{N})$ to be of 
the form $\sigma\us{1}\times\dots\times\sigma\us{m}$ for some
functions $\sigma\us{i}\colon M\us{i}\to N\us{i}$. 
So, since such a homomorphism
also preserves the operations $\hat{g}$ ($g\in\lngG$), it follows that
$(\sigma\us{i})_{i\in[m]}$ must be a homomorphism $\al{M}\to\al{N}$.

The last statement, (iii), follows immediately from the definitions
of the object and morphism functions of $\frak{P}$, completing the proof.
\end{proof}

Later on in the paper, when we use the definitions of the operations
of the algebra $\frak{P}(\al{M})$ (see Proposition~\ref{prp-functorP})
it will be convenient to think of the elements of $\prod_{i\in[m]} M\us{i}$
as column vectors of length $m$, and $k$-tuples of elements of
$\prod_{i\in[m]} M\us{i}$ as $m\times k$ matrices whose columns are in  
$\prod_{i\in[m]} M\us{i}$.
Accordingly, we will introduce the following convention:
for an $m\times k$ matrix
$\al{a}=(a\lsus{j}{i})_{i\in[m],j\in[k]}\in\bigl(\prod_{i\in[m]} M\us{i}\bigr)^k$,
we will denote
the $j$-th column $(a\lsus{j}{i})_{i\in[m]}$ of $\al{a}$ by $\al{a}\ls{j}$,
and the $i$-th row $(a\lsus{j}{i})_{j\in[k]}$ of $\al{a}$ by $\al{a}\us{i}$. 
Thus, the definitions of the operations of $\frak{P}(\al{M})$ in
\eqref{eq-constrd} and \eqref{eq-constrg} can be rewritten as follows:
\begin{itemize}
  \item
for every $m\times m$ matrix
$\al{a}=(a\lsus{j}{i})_{i\in[m],j\in[m]}\in\bigl(\prod_{i\in[m]} M\us{i}\bigr)^m$,
\begin{equation*}
  \diag(\al{a})=\text{diagonal of $\al{a}$},
  \tag*{(\ref{eq-constrd})$'$}
\end{equation*}
\item
for every symbol $g\in\mathcal{G}$
with input and output sorts $\al{i}=(i_1,\dots,i_k)\in[m]^k$ and $i'\in[m]$
  (and hence of arity $k$),
and
for every $m\times k$ matrix
$\al{a}=(a\lsus{j}{i})_{i\in[m],j\in[k]}\in\bigl(\prod_{i\in[m]} M\us{i}\bigr)^k$,
\begin{equation*}
  \hat{g}(\al{a})\ \text{is obtained from $\al{a}\ls{1}$ by replacing
    the $i'$-th entry with $g(a\lsus{1}{i_1},\dots,a\lsus{k}{i_k})$}.
  \tag*{(\ref{eq-constrg})$'$}
\end{equation*}  
\end{itemize}

\begin{remrk}
  \label{rm-terms-G}
This description of the operations of $\mathcal{D}^+(\hat{\lngG})$
generalizes to all terms, and yields an alternative view of (and proof for)
Theorem~\ref{thm-functorP}. The description is as follows:

\smallskip
\noindent
{\it
For any positive integer $k$,
there is a bijection
$\displaystyle  T(\al{x})\mapsto
  \begin{bmatrix}
    t\us{1}(\al{x})\\
    \vdots\\
    t\us{m}(\al{x})
  \end{bmatrix}
$  
between
\begin{itemize}
\item
  the $k$-ary terms $T$ of $\mathcal{D}^+(\hat{\lngG})$
(modulo the identities in $\mathcal{D}^+(\hat{\lngG})$)
  and
\item
  the $m$-tuples $t\us{1},\dots,t\us{m}$ 
of $mk$-ary $\lngG$-terms such that the input sort of each $t\us{i}$
$(i\in[m])$
is the $m\times k$ matrix with $j$-th row $[j\ \dots\ j]$ for each
$j\in[m]$, while the output sort is $i$
\end{itemize}
such that
the following equality holds in the algebra
$\al{P}:=\frak{P}(\al{M})$ for each object
$\al{M}=\bigl((M^{(i)})_{i\in[m]},\lngG\bigr)\in\MSAlg^+(\lngG)$:
  \begin{equation*}
    \qquad
  T(\al{a})=
  \begin{bmatrix}
    t\us{1}(\al{a})\\
    \vdots\\
    t\us{m}(\al{a})
  \end{bmatrix}
  \quad
  \text{for every $m\times k$ matrix
    $\al{a}\in\Bigl(\prod_{i\in[m]}M^{(i)}\Bigr)^k=P^k$,}
  \end{equation*}
  where $T$ is applied to the $k$ columns of $\al{a}$, 
  and $t\us{1},\dots,t\us{m}$ are applied to the $mk$ entries of $\al{a}$.}

\smallskip
\noindent
It is clear from the definitions of the operations in
$\mathcal{D}^+(\hat{\lngG})$
that terms of height $\le 1$ have the form
described in the statement, and
an easy induction on the complexity of terms yields the same result for
arbitrary terms $T$. For the converse, one can prove first,
again by induction on the complexity of terms, that 
every $\lngG$-term with output sort $i$ occurs as the $i$-th coordinate term
$t^{(i)}$ of some term $T$ of $\mathcal{D}^+(\hat{\lngG})$, and then use
the diagonal operation $\diag$ to construct a single term $T$ as claimed
for every sequence
$t\us{1},\dots,t\us{m}$  of $\lngG$-terms satisfying the requirements in the
statement above.
\end{remrk}

This paper will focus on the composition of the functors
$\frak{M}$ and $\frak{P}$. For further reference we now summarize the
consequences of Theorems~\ref{thm-functorM} and \ref{thm-functorP}
that we will need.

\begin{cor}\label{cor-functorC}
  For any algebraic language $\mathcal{F}$ with no nullary symbols
  and for any finite $\mathcal{F}$-algebra
  $\al{I}=([m];\mathcal{F})$, the category $\comma$
  of $\lngF$-algebras is equivalent to the category
  $\mathcal{D}^+(\hat{\lngF_{\al{I}}})$.

  In more detail, the functor $\frak{P}\circ\frak{M}$ yields an 
  isomorphism
  \[
  \frak{C}\colon\comma\to\DAlg^\boxplus(\hat{\lngF_{\al{I}}})
  \]
  between $\comma$ and the full subcategory
  $\DAlg^\boxplus(\hat{\lngF_{\al{I}}})$ of
  $\mathcal{D}^+(\hat{\lngF_{\al{I}}})$
  with object and morphism functions defined as follows:
\begin{itemize}
  \item
  $\frak{C}$ assigns to every object $(\al{A},\chi)$ in $\comma$
  the algebra 
  \[
  \frak{C}(\al{A},\chi):=
  \bigl(D\us{1}\times\dots\times D\us{m};\hat{\lngF}_{\al{I}}\bigr)
  \]
  in $\DAlg^\boxplus(\hat{\lngF}_{\al{I}})$ where 
  \begin{itemize}
  \item
    $\prod_{i\in[m]} D\us{i}$ is the product
    of the congruence classes $D\us{i}:=\chi^{-1}(i)$
    of the kernel of $\chi$,
  \item
    $\diag$ is the diagonal operation on $\prod_{i\in[m]} D\us{i}$, that is,
    for every $m\times m$ matrix
    $\al{a}=(a\lsus{j}{i})_{i\in[m],j\in[m]}\in\bigl(\prod_{i\in[m]} D\us{i}\bigr)^m$,
    \begin{equation*}
     \diag(\al{a})=\text{diagonal of $\al{a}$},
    \end{equation*}
    and
  \item
    for each $f\in\mathcal{F}$ (say $k$-ary), $\al{i}=(i_1,\dots,i_k)\in[m]^k$,
    and for every
    $m\times k$ matrix
    $\al{a}=(a\lsus{j}{i})_{i\in[m],j\in[k]}\in\bigl(\prod_{i\in[m]} D\us{i}\bigr)^k$,
    \begin{multline*}
      \qquad\qquad\qquad
    \hat{f}_{\wec{i}}(\al{a})\ \text{is obtained from $\al{a}\ls{1}$ by replacing
      the $f(\wec{i})$-th entry}\\
    \text{with $f(a\lsus{1}{i_1},\dots,a\lsus{k}{i_k})$, where
    $f(\wec{i})$ is evaluated in $\al{I}$}.
      \qquad
    \end{multline*}  
  \end{itemize}
  \item
  $\frak{C}$ assigns to every morphism
  $\psi=(\al{A},\chi)\to(\al{B},\xi)$ in $\comma$ the homomorphism
  \[
  \frak{C}(\psi):=\psi\us{1}\times\dots\times\psi\us{m}\colon
  \frak{C}(\al{A},\chi)\to\frak{C}(\al{B},\xi)
  \]
  of $\hat{\lngF_{\al{I}}}$-algebras,
  where for each $i\in [m]$, $\psi\us{i}\colon \chi^{-1}(i)\to\xi^{-1}(i)$
  is the restriction of $\psi$ to $\chi^{-1}(i)$, and
  $\psi\us{1}\times\dots\times\psi\us{m}$ is the induced function
  \[
  \chi^{-1}(1)\times\dots\times\chi^{-1}(m)\to
  \xi^{-1}(1)\times\dots\times\xi^{-1}(m).\]
  \end{itemize} 
\end{cor}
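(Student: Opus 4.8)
The plan is to obtain Corollary~\ref{cor-functorC} by simply composing the two isomorphisms produced in Theorems~\ref{thm-functorM} and \ref{thm-functorP}, so almost all of the work has already been done; what remains is to track the intermediate categories and to unwind the explicit formulas. First I would observe that the hypothesis of the corollary --- $\lngF$ has no nullary symbols and $\al{I}=([m];\lngF)$ is finite --- guarantees that the multisorted language $\lngF_{\al{I}}$ constructed in Proposition~\ref{prp-functorM} has no nullary symbols either (a nullary $f\in\lngF$ would be needed to produce a nullary $f_{\wec i}$) and has finitely many sorts (indexed by $[m]$). Hence $\lngF_{\al{I}}$ meets the hypotheses of Proposition~\ref{prp-functorP} and Theorem~\ref{thm-functorP}, so the language $\hat{\lngF_{\al{I}}}$ and the functor $\frak{P}\colon\MSAlg(\lngF_{\al{I}})\to\Alg(\hat{\lngF_{\al{I}}})$ are defined, and the variety $\mathcal{D}(\hat{\lngF_{\al{I}}})$ makes sense.

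Next I would chain the isomorphisms. By Theorem~\ref{thm-functorM}(2), $\frak{M}$ restricts to an isomorphism $\comma\to\MSAlg^{\boxplus}(\lngF_{\al{I}})$. By Theorem~\ref{thm-functorP}(2), $\frak{P}$ restricts to an isomorphism $\MSAlg^{\boxplus}(\lngF_{\al{I}})\to\DAlg^{\boxplus}(\hat{\lngF_{\al{I}}})$. Composing, $\frak{C}:=\frak{P}\circ\frak{M}$ is an isomorphism $\comma\to\DAlg^{\boxplus}(\hat{\lngF_{\al{I}}})$, and since $\DAlg^{\boxplus}(\hat{\lngF_{\al{I}}})$ is a full subcategory of $\DAlg^{+}(\hat{\lngF_{\al{I}}})\subseteq\overline{\DAlg}^{\,+}(\hat{\lngF_{\al{I}}})$, the first assertion (equivalence of $\comma$ with $\overline{\DAlg}^{\,+}(\hat{\lngF_{\al{I}}})$) follows from the remark immediately after Definition~\ref{df-cat3} together with the fact, used already in the proof of Theorem~\ref{thm-functorP}, that every object of $\overline{\DAlg}^{\,+}(\hat{\lngF_{\al{I}}})$ is isomorphic to one in $\DAlg^{+}(\hat{\lngF_{\al{I}}})$, and every object of $\DAlg^{+}$ to one in $\DAlg^{\boxplus}$ (replace the sorts of $\al{M}$ by pairwise disjoint copies).

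It then remains to check that the explicit description of $\frak{C}$ in the statement agrees with $\frak{P}\circ\frak{M}$. On objects, $\frak{M}(\al{A},\chi)$ has sorts $D\us i=\chi^{-1}(i)$ and multisorted operations $f_{\wec i}=f\restr(D\us{i_1}\times\cdots\times D\us{i_k})$ with output sort $f(\wec i)$ computed in $\al{I}$; applying $\frak{P}$ gives the algebra on $\prod_{i\in[m]}D\us i$ whose operations are the diagonal $d$ from \eqref{eq-constrd} and the operations $\hat g=\widehat{f_{\wec i}}$ from \eqref{eq-constrg}, which --- reading off formula (\ref{eq-constrg})$'$ with $g=f_{\wec i}$ and $i'=f(\wec i)$ --- is exactly the operation $\hat f_{\wec i}$ described in the corollary. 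On morphisms, $\frak{M}(\psi)=(\psi\us i)_{i\in[m]}$ and then $\frak{P}((\psi\us i)_{i\in[m]})=\psi\us 1\times\cdots\times\psi\us m$, matching the stated formula. I do not anticipate a genuine obstacle here: every step is a bookkeeping check against the already-proved theorems and the already-stated definitions. The only point demanding a little care --- and the step I would flag as the most error-prone --- is the verification that the hypotheses carry over correctly from $(\lngF,\al{I})$ to $(\lngF_{\al{I}},\text{its sort set})$, in particular that ``$\lngF$ has no nullary symbols'' really does yield ``$\lngF_{\al{I}}$ has no nullary symbols,'' so that $\frak{P}$ and $\mathcal{D}(\hat{\lngF_{\al{I}}})$ are legitimately in scope and the empty-algebra edge case is handled consistently across the two theorems.
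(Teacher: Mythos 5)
Your proposal is correct and takes the same route the paper intends: the statement is labelled a corollary precisely because it follows by composing the restricted isomorphisms from Theorems~\ref{thm-functorM}(2) and \ref{thm-functorP}(2) and unwinding the explicit formulas for $\frak{M}$ and $\frak{P}$. Your hypothesis-tracking check (that $\lngF_{\al{I}}$ inherits the ``no nullary symbols'' and ``finitely many sorts'' properties from $\lngF$ and $\al{I}$) and your reading of (\ref{eq-constrg})$'$ with $g=f_{\wec i}$, $i'=f(\wec i)$ are exactly the verifications needed.
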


Note that it is necessary to assume that 
the algebra $\al{I}$ is finite
if we want to obtain a finitary diagonal operation $\diag$. The assumption that
the language $\lngF$ has no nullary symbols is also unavoidable, 
unless we are willing to lose nullary symbols during the construction
$(\al{A},\chi)\mapsto \frak{C}(\al{A},\chi)$.
Indeed, if an algebra $\frak{C}(\al{A},\chi)$ had a 
nullary operation symbol, and hence a corresponding unary constant 
term operation, then by the description of the term operations of 
$\frak{C}(\al{A},\chi)$ in
Corollary~\ref{cor-termops}(1),
the algebra $\al{A}$ should have constant term operations 
$t\us{i}$ (and hence their nullary counterparts) 
with values 
in every kernel class $\chi^{-1}(i)$ ($i\in[m]$)
of the homomorphism $\chi\colon\al{A}\to\al{I}$.

This restriction on the language $\lngF$ does not affect the applicability
of the functor $\frak{C}$. Indeed, if $\lngF$ is any language with at least
one nullary symbol, then letting $\lngF'$ be the algebraic language
obtained from $\lngF$ by replacing each nullary symbol $c\in F$ by a unary
symbol $c'$ we get a functor
\[
'\colon\Alg(\lngF)\to\Alg(\lngF')
\]
\begin{itemize}
\item
by assigning to every $\lngF$-algebra $\al{A}=(A;\lngF)$
(which is necessarily nonempty) the algebra $\al{A}'=(A,\lngF')$
obtained from $\al{A}$ by defining $c'$ to be the unary constant operation
on $A$ with value $c$ for every nullary symbol $c\in\lngF$, and by keeping all
other operations unchanged; and
\item
  by assigning to every homomorphism $\phi\colon\al{A}\to\al{B}$ of
  $\lngF$-algebras the same function $\phi':=\phi$, which is a homomorphism
  $\al{A}'\to\al{B}'$ of $\lngF'$-algebras.
\end{itemize}
Clearly, this is an isomorphism between $\Alg(\lngF)$ and a full subcategory
of $\Alg(\lngF')$.
Given a finite $\lngF$-algebra $\al{I}=([m];\lngF)$, this isomorphism
induces a functor
\[
\frak{I}\colon\comma\to\bigl(\Alg(\lngF')\,{\downdownarrows}\,\al{I}'\bigr)
\]
which is an isomorphism between $\comma$ and a full subcategory of
$\bigl(\Alg(\lngF')\,{\downdownarrows}\,\al{I}'\bigr)$.
This can now be composed with the isomorphism from Corollary~\ref{cor-functorC}
with domain category $\bigl(\Alg(\lngF')\,{\downdownarrows}\,\al{I}'\bigr)$.

  \begin{remrk}
    \label{rm-terms-FI}
For every algebraic language $\lngF$ with no nullary symbols
  and for any finite $\mathcal{F}$-algebra
  $\al{I}=([m];\mathcal{F})$,
  the description (stated without proof in Remark~\ref{rm-terms-G})
  for the terms of the variety
  $\mathcal{D}^+(\hat{\lngF_{\al{I}}})$ via $\lngF_{\al{I}}$-terms easily yields
  a description of the terms of
  $\mathcal{D}^+(\hat{\lngF_{\al{I}}})$ via $\lngF$-terms and $\al{I}$.
  In Section~\ref{sec-functorC} we give a proof for this result, but instead of
  using computations with terms, we derive the result from
  Corollary~\ref{cor-functorC} via examining the
  relationship between free algebras in $\comma$ and in $\Alg(\lngF)$
  (see Corollary~\ref{cor-terms}). 
\end{remrk}

\section{Algebraic properties of the functor $\frak{C}$}
\label{sec-functorC}

Throughout this section, $\lngF$ will be an algebraic language with no nullary
symbols, and $\al{I}=([m];\lngF)$ will be a finite $\lngF$-algebra ($m>0$). 
The functor $\frak{C}$ (see Corollary~\ref{cor-functorC})
describes a construction which produces from every
$\lngF$-algebra $\al{A}$ with a fixed homomorphism $\chi$
onto $\al{I}$ a new
algebra $\frak{C}(\al{A},\chi)$
in the language $\hat{\lngF}_{\al{I}}$.
Our goal in this section is to demonstrate
that this construction preserves
a lot of important algebraic properties.
As discussed in the Introduction, variants and special cases of many of these
preservation results have appeared in the literature, but often not in the
generality that we need in Sections~\ref{sec-appl1}--\ref{sec-appl2}.
Therefore we decided to include this section to survey a wide range
of algebraic properties preserved by the functor $\frak{C}$, most of which will
be used in the proofs of the main results of the paper in
Sections~\ref{sec-appl1}--\ref{sec-appl2}.

Our survey will start with
preservation theorems for subalgebras of products
(see Theorem~\ref{thm-subprod}) and
compatible relations, including endomorphisms and automorphisms
  (see   Corollaries~\ref{cor-A-n-alpha}--\ref{cor-endo}).
Of particular interest are the preservation results on
congruences (see Corollary~\ref{cor-congr}),
commutator properties (see Theorem~\ref{thm-comm}),
and tame congruence theoretic types (see Theorem~\ref{thm-tct}).
We will also describe how the clone of term (resp., polynomial) operations
changes when we pass from $\al{A}$ (paired with $\chi$) to $\frak{C}(\al{A},\chi)$
(see Corollaries~\ref{cor-termops}(1) and \ref{cor-poly}).
The description of term operations will imply,
for example, that under this  construction, 
the satisfaction of idempotent Maltsev conditions by the generated varieties
is also preserved (see Corollary~\ref{cor-clone-idem}).
  Finiteness properties of algebras and clones preserved by
  $\frakC$ will also be considered
  (see, e.g., Theorem~\ref{thm-subprod}(4) and
  Corollaries~\ref{cor-quotient}(3), \ref{cor-clone-fingen},
  \ref{cor-presentations}).

\subsection{Subalgebras of products and compatible relations}
\label{subsec-subalg}
In the category\break $\comma$, the subobjects of an object $(\al{A},\chi)$
are (up to isomorphism) the objects $(\al{B},\xi)$
such that $B\subseteq A$ and the inclusion map $B\to A$ is a morphism
$(\al{B},\xi)\to(\al{A,\chi})$; equivalently,
the subobjects of $(\al{A},\chi)$
are (up to isomorphism) the pairs $(\al{B},\xi)$ where
$\al{B}$ is a subalgebra of $\al{A}$ and
$\xi=\chi|_{\al{I}}\colon \al{B}\to\al{I}$ is onto
(i.e., $B$ has a nonempty intersection with $\chi^{-1}(i)$
for all $i\in[m]$).

The category $\comma$ has products for all nonempty families of objects.
Namely, if 
$(\al{A}\ls{j},\chi\ls{j})$ ($j\in J$, $J\not=\emptyset$)
is an indexed family of objects in $\comma$,
  then their product in $\comma$ is their pullback in $\Alg(\lngF)$, one
  representative of which
is the fibered product $\prodI_{j\in J}\al{A}\ls{j}$
together with the induced homomorphism $\chi$ defined as follows:
\[
\ProdI_{j\in J}A\ls{j}
=\Bigl\{(a\ls{j})_{j\in J}\in\prod_{j\in J}A\ls{j}:
\chi\ls{j}(a\ls{j})=\chi\ls{j'}(a\ls{j'}) \text{ for all $j,j'\in J$}
\Bigr\}
=\bigcup_{i\in[m]}\Bigl(\prod_{j\in J}D\lsus{j}{i}\Bigr)
\]
where $D\lsus{j}{i}=\chi^{-1}\ls{j}(i)$ for all $i\in[m]$, $j\in J$, and
\[
\chi\colon \ProdI_{j\in J}\al{A}\ls{j}\to\al{I},\quad
(a\ls{j})_{j\in J}\mapsto \chi\ls{j}(a\ls{j}),
\]
where the image $\chi\ls{j}(a\ls{j})$ is independent of the choice of $j\in J$.

\begin{thm}
\label{thm-subprod}
Let $(\al{A}\ls{j},\chi\ls{j})$ $(j\in J)$ be arbitrary
  objects of $\comma$,
  let $\al{C}_j:=\frak{C}(\al{A}\ls{j},\chi\ls{j})$ $(j\in J)$,
  and for any $i\in[m]$ let
  $D\us{i}:=\prod_{l\in J}\chi\ls{l}^{-1}(i)$.
  \begin{enumerate}
    \item[{\rm(1)}]
   $\frak{C}\bigl(\prodI_{j\in J}\al{A}\ls{j},\chi\bigr)=\prod_{j\in J}\al{C}_j$,
      and the mapping
      \begin{align*}
      B\mapsto  B^\frakC & {}:=
        \left\{
    \begin{pmatrix}
      \begin{bmatrix}
        b\lsus{j}{1}\\
        \vdots\\
        b\lsus{j}{m}
      \end{bmatrix}
    \end{pmatrix}_{\kern-5pt j\in J}\in\prod_{j\in J}C_j:
    (b\lsus{j}{i})_{j\in J}\in B\cap D\us{i}\ 
    \text{for all $i\in[m]$}
    \right\}\\
    & {} = \prod_{i\in[m]}(B\cap D\us{i})
    \quad \text{(up to a rearrangement of coordinates)}
    \end{align*}
    is an isomorphism between
    \begin{enumerate}
    \item[$\diamond$]
      the ordered set of all subuniverses $B$ of
      the algebra $\prodI_{j\in J}\al{A}\ls{j}$ with
      the property that $B\cap D\us{i}\not=\emptyset$ for every $i\in[m]$,
      and
    \item[$\diamond$]
      the ordered set of all nonempty subuniverses of
      the algebra $\prod_{j\in J}\al{C}_j$.
    \end{enumerate}
    
  \item[{\rm(2)}]
  This mapping\,\ $^\frakC$ 
  preserves all intersections of subuniverses 
  and all coordinate manipulations on subuniverses
  (namely: permutation and duplication of coordinates, and
  projection of subuniverses onto subsets of coordinates).
  In more detail, for intersection and for projection this means that
  if $R\ls{\ell}$ $(\ell\in L)$ and $R$ are subuniverses of
  $\prodI_{j\in J}\al{A}_j$ in the domain
  of\,\ ${}^\frakC$ and $K\subseteq J$, then
  \[
  \qquad
  \Bigl(\bigcap_{\ell\in L} R\ls{\ell}\Bigr)^\frakC=\bigcap_{\ell\in L}R\ls{\ell}^\frakC
  \quad \text{and}\quad
  \bigl(R|_K\bigr)^\frakC=R^\frakC|_K,
  \]
  respectively, where $=$ means that if the left hand side is defined,
  then so is the right hand side and the equality holds.
    
  \item[{\rm(3)}]
    For arbitrary choice of elements
    $d\us{i}=(d\lsus{j}{i})_{j\in J}\in D\us{i}$ $(i\in[m])$, the
    function
    \[
    \tilde{\phantom{G}}\colon \ProdI_{j\in J} A \ls{j}\to\prod_{j\in J}C_j,\quad
    x=(x_j)_{j\in J}\ \ \mapsto\ \ 
    \tilde{x}:=
    \begin{pmatrix}
      \begin{bmatrix}
        d\lsus{j}{1}\\
        \vdots\\
        d\lsus{j}{i-1}\\
        x\ls{j}\\
        d\lsus{j}{i+1}\\
        \vdots\\
        d\lsus{j}{m}
      \end{bmatrix}
    \end{pmatrix}_{\kern-5pt j\in J}
    \ \ \text{if $x\in D\us{i}$}
    \]
    has the following property:
    If $\al{B}$ is a subalgebra of $\prodI_{j\in J}\al{A}\ls{j}$ such
    that $d\us{i}\in B\cap D\us{i}$ for every $i\in[m]$,
    then for every set $G\subseteq B$
    that generates $\al{B}$,
    the set $\tilde{G}:=\{\tilde{b}:b\in G\}$
    generates $\al{B}^\frakC$.

  \item[{\rm(4)}]
    Consequently, for every subalgebra $\al{B}$ of
    $\prodI_{j\in J}\al{A}\ls{j}$ such that
    $B\cap D\us{i}\not=\emptyset$ for every $i\in[m]$,
    $\al{B}$ is finitely generated if and only if
    $\al{B}^\frakC$ is finitely generated.
  \end{enumerate}  
\end{thm}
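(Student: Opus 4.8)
The plan is to fix $\al{D}:=\prod_{j\in J}\al{C}_j$ and, via the rearrangement of coordinates in part~(1), to view each element of $\al{D}$ as an $m$-tuple $(r\us{1},\dots,r\us{m})$ with $r\us{i}\in D\us{i}$. Under this identification, for $x\in D\us{i}$ the element $\tilde{x}$ is the $m$-tuple whose $i$-th entry is $x$ and whose $i'$-th entry is $d\us{i'}$ for every $i'\neq i$, and $B^*=\prod_{i\in[m]}(B\cap D\us{i})$ for every subuniverse $B$ of $\prodI_{j\in J}\al{A}_j$ that meets every block. The goal is to show that the subuniverse $\langle\tilde{G}\rangle$ of $\al{D}$ generated by $\tilde{G}$ equals $B^*$. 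Observe first that $\al{B}$ is nonempty, since it contains each $d\us{i}$; as $\lngF$ (hence $\hat{\lngF}_{\al{I}}$) has no nullary symbols, every generating set of a nonempty algebra is nonempty, so $G\neq\emptyset$ and $\tilde{G}\neq\emptyset$. In particular $\langle\tilde{G}\rangle$ is a \emph{nonempty} subuniverse of $\al{D}$, so it lies in the range of the bijection $B\mapsto B^*$ of part~(1).

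For the inclusion $\langle\tilde{G}\rangle\subseteq B^*$ it suffices, since $B^*$ is a subuniverse of $\al{D}$ by part~(1), to check $\tilde{G}\subseteq B^*$. Given $g\in G$, let $i\in[m]$ be the unique index with $g\in D\us{i}$ (the blocks $D\us{i}$ partition $\prodI_{j\in J}\al{A}_j$). Then the $i$-th entry of $\tilde{g}$ is $g$, which lies in $B\cap D\us{i}$ because $g\in G\subseteq B$, while for $i'\neq i$ the $i'$-th entry of $\tilde{g}$ is $d\us{i'}\in B\cap D\us{i'}$ by hypothesis; hence $\tilde{g}\in\prod_{i'\in[m]}(B\cap D\us{i'})=B^*$.

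For the reverse inclusion I would pull $\langle\tilde{G}\rangle$ back through part~(1). Since $\langle\tilde{G}\rangle$ is a nonempty subuniverse of $\al{D}$, part~(1) gives a unique subuniverse $B_0$ of $\prodI_{j\in J}\al{A}_j$ with $B_0\cap D\us{i}\neq\emptyset$ for all $i$ and $B_0^*=\langle\tilde{G}\rangle$; and because $B\mapsto B^*$ is an order isomorphism it reflects inclusions, so $\langle\tilde{G}\rangle\subseteq B^*$ forces $B_0\subseteq B$. It remains to prove $G\subseteq B_0$, for then $B\subseteq B_0$ (as $G$ generates $\al{B}$ and $B_0$ is a subuniverse of $\prodI_{j\in J}\al{A}_j$), whence $B_0=B$ and $\langle\tilde{G}\rangle=B_0^*=B^*$, as desired. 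To see $G\subseteq B_0$, take $g\in G$ with $g\in D\us{i}$; then $\tilde{g}\in\langle\tilde{G}\rangle=B_0^*=\prod_{i'\in[m]}(B_0\cap D\us{i'})$, and the $i$-th entry of $\tilde{g}$ being $g$ gives $g\in B_0\cap D\us{i}\subseteq B_0$.

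The whole argument rests on part~(1); beyond that it is only bookkeeping about which coordinate of $\tilde{g}$ records $g$, so I do not expect a serious obstacle (a direct proof that builds elements of $B^*$ from $\tilde{G}$ using the operations $d$ and $\hat{f}_{\wec{i}}$ is possible but messier, because applying $\hat{f}_{\wec{i}}$ produces ``dirty'' extra coordinates that must be cleaned up with $d$). The one place that needs care is the degenerate case: one must know that $\langle\tilde{G}\rangle$ is nonempty so that part~(1) applies to it, and this is exactly where the hypothesis that $\al{B}$ contains all the $d\us{i}$ (together with the absence of nullary symbols) is used.
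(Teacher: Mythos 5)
Your proof of part (3) is correct and follows essentially the same route as the paper: pull $\langle\tilde{G}\rangle$ back through the bijection of part (1) to a subuniverse $B_0$ (the paper calls it $\al{S}$), use $\tilde{G}\subseteq B^*$ to get one inclusion and read off the $i$-th coordinate of each $\tilde{g}$ to get $G\subseteq B_0$, hence $B\subseteq B_0$. The only difference is cosmetic: you spell out why $G$ must be nonempty (no nullary symbols), a point the paper passes over silently, and you phrase the descent from $\langle\tilde{G}\rangle\subseteq B^*$ to $B_0\subseteq B$ via order-reflection rather than the paper's direct ``$\al{S}^*=\al{T}$ is a subalgebra of $\al{B}^*$.''
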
  

The elements $d\us{i}$ ($i\in[m]$) used in the definition of the function
$\tilde{\phantom{G}}$ in part (3) of the theorem above will be referred to as \emph{padding elements}.

\begin{proof}[Proof of Theorem~\ref{thm-subprod}]
  Statements (1)--(2)
  follow by combining our description of subobjects
    and products in the category $\comma$ (see the first two paragraphs
    of this subsection) with the following properties of the functor 
    $\frak{C}$ (see Corollary~\ref{cor-functorC}):
    (i)~$\frak{C}$ is an isomorphism between the category $\comma$ and its
    image category $\frak{C}\comma$, (ii)~$\frak{C}$ sends the product object
  $\Bigl(\prodI_{j\in J}\al{A}\ls{j},\chi\bigr)$ to the product algebra
  $\prod_{j\in J}\frak{C}(\al{A}_j,\chi_j)=\prod_{j\in J}\al{C}_j$,
  and (iii)~$\frak{C}$ sends
  the subobjects $(\al{B},\chi|_\al{B})$ of an object
  $(\al{A},\chi)$ to the subalgebras
  $\al{B}^\frakC:=\frak{C}(\al{B},\chi|_\al{B})$ of
  $\frak{C}(\al{A},\chi)$.

  For the proof of statement (3)
  assume $G$ generates $\al{B}$, and
  let $\al{T}$ denote the subalgebra of
  $\prod_{j\in J}\al{C}_j$ generated by the set
  $\tilde{G}$. 
  Since $\al{T}$ is nonempty (as $\al{B}$,
  and hence $G$ are nonempty), statement~(1) implies that 
  $\al{T}=\al{S}^\frakC$ for some subalgebra $\al{S}$ of
  $\prodI_{j\in J}\al{A}\ls{j}$ 
  such
  that $S\cap D\us{i}\not=\emptyset$ for every $i\in[m]$.
  Our goal is to show that $\al{T}=\al{B}^\frakC$, or equivalently,
  $\al{S}^\frakC=\al{B}^\frakC$. 
  
  Since $G\subseteq B$, it follows from the description of the elements of
  $\al{B}^\frakC$ that $\tilde{G}\subseteq B^\frakC$. Therefore,
  $\al{S}^\frakC=\al{T}$ is a subalgebra of $\al{B}^\frakC$.
  To prove that $\al{B}^\frakC$ is a subalgebra of $\al{S}^\frakC$, it suffices to
  establish that $\al{B}$ is a subalgebra of $\al{S}$. Since
  $\tilde{G}\subseteq\al{S}^\frakC$, the description of the elements of $\al{S}^\frakC$
  implies that $G\cup\{d\us{i}:i\in[m]\}\subseteq S$. Hence
  the algebra $\al{B}$ generated by $G$ is a subalgebra of $\al{S}$.

    In statement~(4), the implication that if $\al{B}$ is finitely generated,
    then so is $\al{B}^\frakC$ follows from the statement in part (3). Since
    in that statement $G$ is finite if and only if $\tilde{G}$ is finite
    if and only if $G\cup\{d^{(i)}:i\in[m]\}$ is finite, the converse follows
    by using that $\frak{C}$ is an isomorphism $\comma\to\frak{C}\comma$.
\end{proof}

An important special case of Theorem~\ref{thm-subprod} is when
all objects $(\al{A}_j,\chi_j)$ ($j\in J$) are the same, say $(\al{A},\chi)$.
Then the algebra $\prodI_{j\in J}\al{A}$ depends only on $\al{A}$ and
$\alpha=\ker(\chi)$, and will be denoted by $\al{A}^J[\alpha]$.
The underlying set is
\[
A^J[\alpha]:=\{(a\ls{j})_{j\in J}:a\ls{j}\,\alpha\,a\ls{\ell}
\text{ for all $j,\ell\in J$}\}.
\]
In particular, $A^2[\alpha]$ is nothing else than $\alpha$.

Recall that for an algebra $\al{A}$ the subuniverses of $\al{A}^n$
are often called $n$-ary \emph{compatible relations} of $\al{A}$.
An $n$-ary compatible relation of $\al{A}$, and the corresponding algebra,
is said to be \emph{reflexive} if it contains
all constant $n$-tuples $(a,\dots,a)$ ($a\in A$).
The map $^\frakC$ described in Theorem~\ref{thm-subprod}(1) is particularly
well-behaved for reflexive compatible relations of $\al{A}$
that are contained in $\al{A}^n[\alpha]$, as the following corollary shows.

\begin{cor}
\label{cor-A-n-alpha}
Let $\al{A}$ be an $\mathcal{F}$-algebra
with an onto homomorphism $\chi\colon\al{A}\to\al{I}$.
Let $\al{C}:=\frak{C}(\al{A},\chi)$ and
$\alpha:=\ker(\chi)$.
\begin{enumerate}
\item[{\rm(1)}]
  For every positive integer $n$, the mapping 
  \begin{align*}
    \qquad
    B &{}\mapsto  B^\frakC\\
      &{}=
        \left\{
    \begin{pmatrix}
      \begin{bmatrix}
        a\lsus{1}{1}\\
        \vdots\\
        a\lsus{1}{m}
      \end{bmatrix},\dots,
      \begin{bmatrix}
        a\lsus{n}{1}\\
        \vdots\\
        a\lsus{n}{m}
      \end{bmatrix}
    \end{pmatrix}\in\al{C}^n:
    (a\lsus{1}{i},\dots,a\lsus{n}{i})\in B\ 
    \text{for all $i\in[m]$}
    \right\}
  \end{align*}
  is an isomorphism between
    \begin{enumerate}
    \item[$\diamond$]
      the lattice of all $n$-ary reflexive compatible relation of $\al{A}$
      that are contained in $A^n[\alpha]$,
      and
    \item[$\diamond$]
      the lattice of all $n$-ary reflexive compatible relations of
      $\al{C}$.
    \end{enumerate}  
\item[{\rm(2)}]
  This mapping $^\frakC$ between reflexive relations
  (applied over all arities $n\ge1$) also preserves
  the following relational clone operations: intersection of relations
  of the same arity, composition, and coordinate manipulations
  (permutation, identification, and duplication of coordinates, and
  projection of relations to a subset of coordinates).
  Moreover, $^\frakC$ preserves product in the respective categories; that is,
  if for any reflexive
  compatible relations $R\subseteq A^k[\alpha]$ and
  $S\subseteq A^n[\alpha]$ we define
  $R\times_\alpha S:=(R\times S)\cap A^{k+n}[\alpha]$, then
  \[
(R\times_\alpha S)^\frakC=R^\frakC\times S^\frakC.
  \]
  \end{enumerate}  
\end{cor}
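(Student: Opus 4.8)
The plan is to derive Corollary~\ref{cor-A-n-alpha} from Theorem~\ref{thm-subprod}, applied with all of the objects $(\al{A}\ls{j},\chi\ls{j})$ taken to be $(\al{A},\chi)$ and $J=[n]$, together with two short supplementary arguments: one reconciling reflexivity with the hypothesis ``$B\cap D\us{i}\ne\emptyset$ for all $i$'' of that theorem, and one establishing the identity $(R\times_\alpha S)^*=R^*\times S^*$, which is the one construction appearing in~(2) that is not directly provided by Theorem~\ref{thm-subprod}.

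For statement~(1): under this specialization $\prodI_{j\in J}\al{A}\ls{j}=\al{A}^n[\alpha]$, $\prod_{j\in J}\al{C}_j=\al{C}^n$, and $D\us{i}=(\chi^{-1}(i))^n$; the $n$-ary compatible relations of $\al{A}$ contained in $A^n[\alpha]$ are exactly the subuniverses of $\al{A}^n[\alpha]$, and the map $B\mapsto B^*$ of Theorem~\ref{thm-subprod}(1) is precisely the one in the statement (for a tuple lying in $\al{C}^n$, its $i$-th row automatically lies in $D\us{i}$). Thus Theorem~\ref{thm-subprod}(1) already yields an order isomorphism from the poset of subuniverses $B$ of $\al{A}^n[\alpha]$ with $B\cap D\us{i}\ne\emptyset$ for all $i$ onto the poset of nonempty subuniverses of $\al{C}^n$, and it remains to see that $B$ is reflexive if and only if $B^*$ is. For the latter I would argue directly: for $c=\bigl[c\us{1};\dots;c\us{m}\bigr]\in C$ the constant matrix $(c,\dots,c)$ has $i$-th row $(c\us{i},\dots,c\us{i})$, so $(c,\dots,c)\in B^*$ iff $(c\us{i},\dots,c\us{i})\in B$ for all $i$; since $A=\bigcup_{i\in[m]}\chi^{-1}(i)$ with every $\chi^{-1}(i)\ne\emptyset$, the entry $c\us{i}$ ranges over all of $A$ as $c$ ranges over $C$ and $i$ over $[m]$, and so ``$(c,\dots,c)\in B^*$ for all $c\in C$'' is equivalent to ``$(a,\dots,a)\in B$ for all $a\in A$''. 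In particular every reflexive $B$ satisfies $B\cap D\us{i}\ne\emptyset$, and every reflexive subuniverse of $\al{C}^n$ is nonempty, so $^*$ restricts to an order isomorphism between the reflexive members of the two posets, i.e.\ between the $n$-ary reflexive compatible relations of $\al{A}$ contained in $A^n[\alpha]$ and those of $\al{C}$. Each of these two families is a lattice under inclusion (intersections and generated subuniverses of reflexive relations are again reflexive), and an order isomorphism between lattices is a lattice isomorphism, so~(1) follows.

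For statement~(2): preservation of intersections of a fixed arity and of the coordinate manipulations permutation, duplication, and projection is immediate from Theorem~\ref{thm-subprod}(2), each of these operations carrying reflexive relations to reflexive relations, so $^*$ stays applicable. Identification of two coordinates $l_1,l_2$ of $R\subseteq A^n[\alpha]$ is the composite of first intersecting $R$ with the reflexive relation $\Delta:=\{a\in A^n[\alpha]:a\ls{l_1}=a\ls{l_2}\}$ and then projecting away one of these coordinates; since $\Delta^*=\{c\in C^n:c\ls{l_1}=c\ls{l_2}\}$ is the corresponding equality relation of $\al{C}$ (a matrix has all rows in $\Delta$ iff its $l_1$-th and $l_2$-th columns coincide), and $^*$ preserves both intersection and projection, identification is preserved, with all intermediate relations remaining reflexive. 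Composition is handled in the same fashion: $R\circ S$ is obtained from $R\times_\alpha S$ by intersecting with such an equality relation (gluing the coordinate of $R$ to be composed to that of $S$) and then projecting onto the remaining coordinates, hence is preserved as soon as the product case is known.

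The remaining identity $(R\times_\alpha S)^*=R^*\times S^*$---which I regard as the main point of~(2), since it is not an instance of Theorem~\ref{thm-subprod}---I would check directly from the explicit description of $^*$: for $(c_1,\dots,c_{k+n})\in\al{C}^{k+n}$, this tuple lies in $(R\times_\alpha S)^*$ iff each row $(c_1\us{i},\dots,c_{k+n}\us{i})$ lies in $R\times S$ (the requirement of lying in $A^{k+n}[\alpha]$ is automatic, since all entries of a single row lie in one $\alpha$-class), i.e.\ iff $(c_1\us{i},\dots,c_k\us{i})\in R$ and $(c_{k+1}\us{i},\dots,c_{k+n}\us{i})\in S$ for every $i$, which says exactly $(c_1,\dots,c_k)\in R^*$ and $(c_{k+1},\dots,c_{k+n})\in S^*$; one also notes that $R\times_\alpha S$ is a reflexive compatible relation of $\al{A}$ contained in $A^{k+n}[\alpha]$, so $^*$ indeed applies to it. I expect this last verification, together with the bookkeeping of reflexivity through the reductions of identification and composition, to be the only mildly delicate points; everything else is a routine specialization of Theorem~\ref{thm-subprod}.
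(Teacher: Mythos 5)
Your proposal is correct and follows the same route as the paper: both statements are deduced by specializing Theorem~\ref{thm-subprod} with all objects equal to $(\al{A},\chi)$ and $J=[n]$. The paper's own proof is only three sentences long — it notes that reflexivity of $B$ forces $B\cap(\chi^{-1}(i))^n\ne\emptyset$ automatically, then declares~(1) immediate and~(2) a ``straightforward consequence of~(1).'' Your write-up fills in exactly the details the paper leaves to the reader: the bidirectional argument that $B$ is reflexive if and only if $B^*$ is (the paper only records the forward half needed to land in the domain of $^*$), and the reductions of identification and composition to intersection, product, and projection, together with the direct check of $(R\times_\alpha S)^*=R^*\times S^*$. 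All of these steps are sound, and your observation that the membership of each row of an element of $\al{C}^{k+n}$ in $A^{k+n}[\alpha]$ is automatic is precisely the point that makes the product identity go through.
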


\begin{proof}
  Statement~(1) follows immediately from Theorem~\ref{thm-subprod}(1)
  and the fact that for any $n$-ary reflexive relation $B$ of $\al{A}$
  with $B\subseteq A^n[\alpha]$ the property that
  ``$B\cap \bigl(\chi^{-1}(i)\bigr)^n\not=\emptyset$ for every $i\in[m]$''
  holds automatically, since $B\cap \bigl(\chi^{-1}(i)\bigr)^n$ contains all
  constant tuples
  $(a,\dots,a)$ with $a\in\chi^{-1}(i)$.
  Statement~(2) is a straightforward consequence of statement (1). 
\end{proof}

Recall that by Corollary~\ref{cor-functorC},
every homomorphism from $\frak{C}(\al{A},\chi)$ to $\frak{C}(\al{B},\xi)$
is of the form $\frak{C}(\psi)$ for
some homomorphism $\psi\colon \al{A}\to \al{B}$ with $\chi = \xi\circ\psi$.
By identifying endomorphisms $\psi$ of $\al{A}$ with their graphs
$\{ (a,\psi(a)) : a\in A\}$, which form subuniverses
of $\al{A}^2$, the map $^\frakC$ described in Theorem~\ref{thm-subprod}(1)
also yields the following correspondence.

\begin{cor}
\label{cor-endo}
Let $\al{A}$ be an $\mathcal{F}$-algebra
with an onto homomorphism $\chi\colon\al{A}\to\al{I}$.
Let $\al{C}:=\frak{C}(\al{A},\chi)$ and
$\alpha:=\ker(\chi)$.
  The mapping 
\begin{equation*}
  \qquad
      \psi\mapsto  \psi^\frakC =
        \left\{
    \begin{pmatrix}
      \begin{bmatrix}
        a\us{1}\\
        \vdots\\
        a\us{m}
      \end{bmatrix},
      \begin{bmatrix}
        \psi(a\us{1})\\
        \vdots\\
        \psi(a\us{m})
      \end{bmatrix}
    \end{pmatrix}\in\al{C}^2:
    a\us{i}\in\chi^{-1}(i)\ 
    \text{for all $i\in[m]$}
    \right\}
\end{equation*}
  is an isomorphism between
    \begin{enumerate}
    \item[$\diamond$]
      the monoid of all endomorphisms (the group of all automorphisms) 
      $\psi$ of $\al{A}$ such that 
      the graph of $\psi$ is contained in $\alpha$,
      and
    \item[$\diamond$]
      the endomorphism monoid (the automorphism group) of
      $\al{C}$.
    \end{enumerate}  
\end{cor}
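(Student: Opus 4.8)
The plan is to deduce the corollary from the fact, established in Corollary~\ref{cor-functorC}, that $\frak{C}$ is an \emph{isomorphism} of categories $\comma\to\DAlg^\boxplus(\hat{\lngF_{\al{I}}})$, after reformulating the monoid on the first side categorically. The key observation is that an endomorphism $\psi$ of $\al{A}$ has graph contained in $\alpha=\ker(\chi)$ precisely when $\chi\circ\psi=\chi$, that is, precisely when $\psi$ is an endomorphism of the object $(\al{A},\chi)$ in the category $\comma$; these are closed under composition (since $\alpha$ is transitive) and include $\id_{\al{A}}$, so they form a submonoid of $\operatorname{End}(\al{A})$. Moreover, such a $\psi$ is an isomorphism in $\comma$ exactly when it is an automorphism of $\al{A}$, because if $\psi$ is bijective then $\chi\circ\psi=\chi$ forces $\chi\circ\psi^{-1}=\chi$, so $\psi^{-1}$ is again a morphism in $\comma$. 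Hence the monoid (group) on the first side of the claimed correspondence is exactly the endomorphism monoid (automorphism group) of the object $(\al{A},\chi)$ in $\comma$.

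To conclude, I would transport this along $\frak{C}$. Since $\DAlg^\boxplus(\hat{\lngF_{\al{I}}})$ is a full subcategory of $\Alg(\hat{\lngF_{\al{I}}})$ (Definition~\ref{df-cat3}), the endomorphisms of $\al{C}=\frak{C}(\al{A},\chi)$ computed in $\DAlg^\boxplus(\hat{\lngF_{\al{I}}})$ are precisely the $\hat{\lngF_{\al{I}}}$-algebra endomorphisms of $\al{C}$; and since $\frak{C}$ is an isomorphism of categories, it restricts to a monoid isomorphism from the endomorphism monoid of $(\al{A},\chi)$ in $\comma$ onto $\operatorname{End}(\al{C})$. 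As $\frak{C}$ and its inverse are functors and hence preserve isomorphisms, this further restricts to a group isomorphism between the corresponding automorphism groups. It then remains to check that, under the identification of endomorphisms with their graphs, this abstract isomorphism is the map $\psi\mapsto\psi^*$ of the statement. For that I would unwind the morphism part of $\frak{C}$ from Corollary~\ref{cor-functorC}: there $\frak{C}(\psi)=\psi\us{1}\times\dots\times\psi\us{m}$, where $\psi\us{i}$ is the restriction of $\psi$ to $\chi^{-1}(i)$ (a map into $\chi^{-1}(i)$ because the graph of $\psi$ lies in $\alpha$), so the graph of $\frak{C}(\psi)$ inside $\al{C}^2$, with $\al{C}=\prod_{i\in[m]}\chi^{-1}(i)$, is exactly the displayed set $\psi^*$. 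Alternatively, as the remark preceding the corollary suggests, one can instead specialize Theorem~\ref{thm-subprod}(1) to $J=\{1,2\}$ with both factors $(\al{A},\chi)$, where $\prodI_{j\in\{1,2\}}\al{A}$ is $\alpha$, and check that the map $^*$ there restricts, along graphs of endomorphisms with graph in $\alpha$, to $\psi\mapsto\psi^*$.

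I expect no conceptual obstacle here: the real work is the bookkeeping in the last step, namely verifying that the explicit set $\psi^*$ in the statement is the graph of $\frak{C}(\psi)$ (equivalently, the image of the graph of $\psi$ under the map $^*$ of Theorem~\ref{thm-subprod}(1)) and that the assignment $\psi\mapsto\psi^*$ is compatible with composition. If one follows the Theorem~\ref{thm-subprod}(1) route, there is a further small point: one must note that, because each block $\chi^{-1}(i)$ is nonempty (as $\chi$ is onto), the graph of any endomorphism of $\al{A}$ with graph in $\alpha$ lies in the domain of $^*$, and that a compatible relation $R\subseteq\alpha$ is the graph of a (total) endomorphism of $\al{A}$ if and only if $R^*$ is the graph of a (total) endomorphism of $\al{C}$ --- both directions following by reading functionality and totality off coordinatewise.
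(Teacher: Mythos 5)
Your proposal is correct. The paper gives no explicit proof for this corollary; it only offers the one-sentence remark before it, which suggests identifying an endomorphism with its graph and applying the map $^*$ from Theorem~\ref{thm-subprod}(1). Your secondary route follows that suggestion, but your primary route --- identifying the monoid on the left as $\operatorname{End}_{\comma}(\al{A},\chi)$ and transporting it via the category isomorphism $\frak{C}$ of Corollary~\ref{cor-functorC} --- is the cleaner way to carry out the same underlying idea, since compatibility with composition and inverses comes for free from functoriality rather than having to be read off from the behaviour of $^*$ on graph subuniverses; and the identification $\chi\circ\psi=\chi\Leftrightarrow\text{graph}(\psi)\subseteq\alpha$ together with the explicit description of $\frak{C}(\psi)=\psi\us{1}\times\dots\times\psi\us{m}$ does produce exactly the displayed set $\psi^*$. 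Both routes rest on the same isomorphism of categories, so this is essentially the paper's approach, just entered one step earlier.
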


\subsection{Congruences and commutator properties}

  Results of the previous subsection specialize
  to congruences as follows.

\begin{cor}
\label{cor-congr}
Let $\al{A}$ be an $\mathcal{F}$-algebra
with an onto homomorphism $\chi\colon\al{A}\to\al{I}$, and let
$\al{C}:=\frak{C}(\al{A},\chi)$, $\alpha:=\ker(\chi)$.
\begin{enumerate}
\item[{\rm(1)}]
The mapping
\begin{equation*}
  \qquad
      \beta\mapsto  \beta^\frakC =
        \left\{
    \begin{pmatrix}
      \begin{bmatrix}
        a\lsus{1}{1}\\
        \vdots\\
        a\lsus{1}{m}
      \end{bmatrix},
      \begin{bmatrix}
        a\lsus{2}{1}\\
        \vdots\\
        a\lsus{2}{m}
      \end{bmatrix}
    \end{pmatrix}\in\al{C}^2:
    (a\lsus{1}{i},a\lsus{2}{i})\in\beta\ 
    \text{for all $i\in[m]$}
    \right\}
\end{equation*}
is an isomorphism between the interval $I(0,\alpha)$ of
the congruence lattice of $\al{A}$ and the congruence lattice of
$\al{C}$.
\item[{\rm(2)}]
The mapping ${}^\frakC$ preserves meet (= intersection) and join of congruences,
and $k$-permutability of congruences for every $k\ge2$; that is,
for arbitrary congruences $\beta_j\in I(0,\alpha)$ $(j\in J)$
and $\beta,\gamma\in I(0,\alpha)$ of $\al{A}$,
\[
\Bigl(\bigcap_{j\in J}\beta_j\Bigr)^\frakC=\bigcap_{j\in J}\beta_j^\frakC,
\qquad
\Bigl(\bigvee_{j\in J}\beta_j\Bigr)^\frakC=\bigvee_{j\in J}\beta_j^\frakC,
\]
and
\[
\beta\circ_k\gamma=\gamma\circ_k\beta
\ \ \Longleftrightarrow\ \ 
\beta^\frakC\circ_k\gamma^\frakC=\gamma^\frakC\circ_k\beta^\frakC.
\]
\item[{\rm(3)}]
  For every $\beta\in I(0,\alpha)$, $\beta$ is a finitely generated
  congruence of $\al{A}$ if and only if $\beta^\frakC$ is a finitely generated
  congruence of $\al{C}$.
\end{enumerate}
\end{cor}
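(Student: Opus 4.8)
The plan is to obtain everything as a corollary of Corollary~\ref{cor-A-n-alpha} in the case $n=2$. Recall that for any algebra $\al{B}$ the congruences of $\al{B}$ are precisely the binary reflexive compatible relations $R\subseteq B^2$ that are symmetric (i.e.\ $R=R^{-1}$) and transitive (i.e.\ $R\circ R\subseteq R$), and that for a congruence $\beta$ of $\al{A}$ one has $\beta\le\alpha$ iff $\beta\subseteq\alpha=A^2[\alpha]$. So $I(0,\alpha)$, ordered by inclusion, is exactly the set of symmetric transitive members of the lattice $\mathcal{R}$ of all binary reflexive compatible relations of $\al{A}$ contained in $A^2[\alpha]$, and $\Con(\al{C})$ is exactly the set of symmetric transitive members of the lattice $\mathcal{R}'$ of all binary reflexive compatible relations of $\al{C}$. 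By Corollary~\ref{cor-A-n-alpha}(1) with $n=2$, the map $^*$ is already an order isomorphism $\mathcal{R}\to\mathcal{R}'$; the main point to verify is therefore that it carries symmetric transitive relations to symmetric transitive relations in both directions.

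For this I would use Corollary~\ref{cor-A-n-alpha}(2). Given $R\in\mathcal{R}$, note that the coordinate swap $R^{-1}$ is again in $\mathcal{R}$ (it is reflexive and compatible, and contained in $\alpha$ since $\alpha$ is symmetric), and that $R\circ R$ is again in $\mathcal{R}$ (reflexive, compatible --- a composition of subuniverses of $\al{A}^2$ is again one --- and contained in $\alpha\circ\alpha=\alpha$ since $\alpha$ is transitive). Since $^*$ preserves coordinate permutations and composition of relations, $(R^{-1})^*=(R^*)^{-1}$ and $(R\circ R)^*=R^*\circ R^*$; and since $^*$ is an order isomorphism it is injective and reflects inclusions among members of $\mathcal{R}$. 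Hence $R=R^{-1}$ iff $R^*=(R^*)^{-1}$, and $R\circ R\subseteq R$ iff $R^*\circ R^*\subseteq R^*$, so $R$ is a congruence of $\al{A}$ iff $R^*$ is a congruence of $\al{C}$. Therefore $^*$ restricts to an order isomorphism $I(0,\alpha)\to\Con(\al{C})$. As $I(0,\alpha)$ is an interval of the complete lattice $\Con(\al{A})$ (so that arbitrary meets and joins in $I(0,\alpha)$ coincide with those in $\Con(\al{A})$) and $\Con(\al{C})$ is a complete lattice, this order isomorphism is automatically a complete-lattice isomorphism; this proves part~(1) together with the identities $(\bigcap_{j}\beta_j)^*=\bigcap_j\beta_j^*$ and $(\bigvee_j\beta_j)^*=\bigvee_j\beta_j^*$ in part~(2). (The identity for intersections also drops out directly from the fact, recorded in Corollary~\ref{cor-A-n-alpha}(2), that $^*$ preserves intersections of relations of the same arity.)

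For the $k$-permutability clause, given $\beta,\gamma\in I(0,\alpha)$, the relations $\beta\circ_k\gamma$ and $\gamma\circ_k\beta$, as well as all of their ``initial segments'', are reflexive compatible relations of $\al{A}$ contained in $\alpha$ (again because $\alpha$ is transitive), hence lie in $\mathcal{R}$; applying the composition-preservation part of Corollary~\ref{cor-A-n-alpha}(2) repeatedly gives $(\beta\circ_k\gamma)^*=\beta^*\circ_k\gamma^*$ and $(\gamma\circ_k\beta)^*=\gamma^*\circ_k\beta^*$, and the injectivity of $^*$ then yields $\beta\circ_k\gamma=\gamma\circ_k\beta$ iff $\beta^*\circ_k\gamma^*=\gamma^*\circ_k\beta^*$, as required.

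The work here is essentially bookkeeping, and the only pitfalls to watch out for are: (a) one must keep checking that each auxiliary relation manufactured from congruences in $I(0,\alpha)$ --- the swap $R^{-1}$, the composites $R\circ R$ and $\beta\circ_k\gamma$ --- is still a reflexive compatible relation \emph{contained in} $A^2[\alpha]$, so that Corollary~\ref{cor-A-n-alpha} genuinely applies to it; and (b) one should not attempt to exhibit $I(0,\alpha)$ as a sublattice of $\mathcal{R}$ (it is not: the congruence join of a family $\beta_j$ is in general strictly larger than their join in $\mathcal{R}$), but instead appeal to the fact that an order isomorphism between complete lattices automatically preserves arbitrary meets and joins.
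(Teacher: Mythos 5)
Your proof is correct and essentially matches what the paper intends (the paper states Corollary~\ref{cor-congr} without proof, just saying it is a specialization of Corollary~\ref{cor-A-n-alpha}). You correctly identify the key observations: that congruences are exactly the symmetric transitive members of the relevant lattices of binary reflexive compatible relations, that symmetry and transitivity are expressible via relational clone operations (coordinate swap and composition) preserved by $^*$, that the auxiliary relations $R^{-1}$, $R\circ R$, and $\beta\circ_k\gamma$ stay inside $A^2[\alpha]$ because $\alpha$ is itself a congruence, and that the resulting order isomorphism between the complete lattices $I(0,\alpha)$ and $\Con(\al{C})$ automatically preserves arbitrary meets and joins (sidestepping the fact that $I(0,\alpha)$ is not a sublattice of the lattice of reflexive compatible relations). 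This last remark (your pitfall (b)) is a genuinely helpful clarification that is not stated in the paper.
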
  

\begin{proof}
  Statements~(1)--(2) are immediate consequences of
  Corollary~\ref{cor-A-n-alpha}.
  Statement (3) follows from (2) and the fact that finitely generated
  congruences are exactly the compact elements in the congruence lattice.
\end{proof}

The description of the functor $\frak{C}$ in Corollary~\ref{cor-functorC},
together with the description of the mapping ${}^\frakC$ it induces
on congruences, implies that
quotient algebras are also preserved by $\frak{C}$ 
in the following sense.

\begin{cor}
  \label{cor-quotient}
Let $\al{A}$ be an $\mathcal{F}$-algebra
with an onto homomorphism $\chi\colon\al{A}\to\al{I}$, and let
$\alpha:=\ker(\chi)$.
\begin{enumerate}
\item[{\rm(1)}]
For every congruence $\beta\in I(0,\alpha)$ of $\al{A}$,
\[
\frak{C}(\al{A}/\beta,\chi/\beta)\cong \frak{C}(\al{A},\chi)/\beta^\frakC,
\]
where $\chi/\beta\colon\al{A}/\beta\to\al{I}$ is the unique homomorphism
that is obtained by factoring
$\chi\colon\al{A}\to\al{I}$ through the natural homomorphism
$\al{A}\to\al{A}/\beta$.
\item[{\rm(2)}]
  In particular, for every congruence $\beta\in I(0,\alpha)$ of $\al{A}$,
  $\al{A}/\beta$ is finite if and only if
  $\frak{C}(\al{A},\chi)/\beta^\frakC$ is finite.
  Consequently, $\al{A}$ is residually finite if and only if
  $\frak{C}(\al{A},\chi)$ is residually finite.
\end{enumerate}
\end{cor}

  Recall (see, e.g., \cite{bulHC, AM:SAHC, Mo:HCT})
  that for any algebra $\al{A}$, the $k$-ary commutator
  is a $k$-ary operation $[{-},\dots,{-}]$
  on the congruence lattice of $\al{A}$,
  and for any choice of $\beta_1,\dots,\beta_k\in\Con(\al{A})$,
  the definition of the commutator $[\beta_1,\dots,\beta_k]$
  uses
  a specific subalgebra $\al{M}_{\al{A}}(\beta_1,\dots,\beta_k)$
  of $\al{A}^{{\mathbf2}^k}$, called the algebra of
  \emph{$(\beta_1,\dots,\beta_k)$-matrices}.
  It is useful to think of the elements of $A^{\mathbf{2}^k}$ as functions
  $f\colon {\mathbf{2}}^k\to A$ labeling the vertices of the
  $k$-dimensional cube $\mathbf{2}^k=\{0,1\}^k$.
  For each $j\in[k]$, $\mathbf{2}^k$ has two `faces' of codimension $1$
  perpendicular to the $j$-th direction: $F_j(0)$ and $F_j(1)$, where
  $F_j(u)$ is the set of all elements of $\mathbf{2}^k$ with $j$-th
  coordinate $u$ ($u\in\mathbf{2}$).
  The algebra $\al{M}_{\al{A}}(\beta_1,\dots,\beta_k)$ of
  $(\beta_1,\dots,\beta_k)$-matrices is generated by all labelings
  of the vertices of
  the $k$-dimensional cube that are constant on the faces
  $F_j(0)$ and $F_j(1)$ for some $j\in[k]$, and have the property that
  the two values they assume are $\beta_j$-related. 
  We introduce some notation to describe this
  generating set.
  For any $j\in[k]$ and $a_0\,\beta_j\,a_1$,
  let $g_j[a_0,a_1]$ denote the labeling $g\colon\mathbf{2}^k\to A$ such that
  for each $u\in\mathbf{2}$ we have that $g|_{F_j(u)}$ is constant with value
  $a_u$. So, the standard generating set for
  $\al{M}_{\al{A}}(\beta_1,\dots,\beta_k)$ is
  \begin{equation}
    \label{eq-HCgenset-A}
  G=\bigcup_{j\in[k]} G_j
  \quad\text{where}\quad
  G_j:=\{g_j[a_0,a_1]:a_0\,\beta_j\,a_1\}
  \ \ \text{for all $j\in[k]$}.
  \end{equation}
  The commutator $[\beta_1,\dots,\beta_k]$ is defined to be the least
  congruence $\gamma$ of $\al{A}$ with the following property:
  \begin{enumerate}
  \item[$(*)$]
    whenever $f$ is an element of $\al{M}_{\al{A}}(\beta_1,\dots,\beta_k)$
    such that $f(\epsilon0)\,\gamma\,f(\epsilon1)$ for all
    $\epsilon\in\mathbf{2}^{k-1}\setminus\{1\dots1\}$, then
    we also have that  
    $f(1\dots10)\,\gamma\,f(1\dots11)$.
  \end{enumerate}  
  Note that condition $(*)$ holds for $\gamma=\beta_k$, because
  every labeling $f$ in $G$ --- and
  hence also every labeling $f$ in $\al{M}_{\al{A}}(\beta_1,\dots,\beta_k)$ ---
  satisfies $f(\epsilon0)\,\beta_k\,f(\epsilon1)$ for all
  $\epsilon\in\mathbf{2}^{k-1}$. Therefore,
  $[\beta_1,\dots,\beta_k]\le\beta_k$.

\begin{thm}
  \label{thm-comm}
Let $\al{A}$ be an $\mathcal{F}$-algebra
with an onto homomorphism $\chi\colon\al{A}\to\al{I}$, and let
$\al{C}:=\frak{C}(\al{A},\chi)$,
$\alpha:=\ker(\chi)$.
The isomorphism ${}^\frakC$ between the interval $I(0,\alpha)$ of
the congruence lattice of $\al{A}$ and the congruence lattice of
$\al{C}$ (see Corollary~\ref{cor-congr}(1))
preserves higher commutators of congruences;
that is, for any integer $k\ge2$
and for arbitrary congruences $\beta_1,\dots,\beta_k\in I(0,\alpha)$
of $\al{A}$,
\begin{equation}
  \label{eq-HC}
[\beta_1,\dots,\beta_k]^\frakC=[\beta_1^\frakC,\dots,\beta_k^\frakC].
\end{equation}
\end{thm}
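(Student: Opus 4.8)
The plan is to reduce \eqref{eq-HC} to the statement that the map $^*$ of Corollary~\ref{cor-A-n-alpha} carries the algebra $\al{M}_{\al{A}}(\beta_1,\dots,\beta_k)$ of $(\beta_1,\dots,\beta_k)$-matrices of $\al{A}$ onto the algebra $\al{M}_{\al{C}}(\beta_1^*,\dots,\beta_k^*)$ of $(\beta_1^*,\dots,\beta_k^*)$-matrices of $\al{C}$ (note $\beta_1^*,\dots,\beta_k^*\in\Con(\al{C})$ by Corollary~\ref{cor-congr}(1)), and then to translate the defining property $(*)$ of the commutator along the isomorphism $^*$ of Corollary~\ref{cor-congr}(1). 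First I would note that $\al{M}_{\al{A}}(\beta_1,\dots,\beta_k)$, regarded as a subuniverse of $\al{A}^{\mathbf{2}^k}$, is a reflexive compatible relation of $\al{A}$ contained in $A^{\mathbf{2}^k}[\alpha]$: it contains every constant labeling $g_j[a,a]$, hence is reflexive, and since $\beta_1,\dots,\beta_k\le\alpha$ each standard generator $g_j[a_0,a_1]$ takes only values in a single $\ker(\chi)$-class, so it lies in the subuniverse $A^{\mathbf{2}^k}[\alpha]$ and therefore so does the whole subalgebra it generates. Thus $^*$ is defined on $\al{M}_{\al{A}}(\beta_1,\dots,\beta_k)$, and the claim is that $\al{M}_{\al{A}}(\beta_1,\dots,\beta_k)^*=\al{M}_{\al{C}}(\beta_1^*,\dots,\beta_k^*)$. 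The inclusion $\supseteq$ is immediate from the generators: if $\mathbf{a}_0\,\beta_j^*\,\mathbf{a}_1$ in $\al{C}$, then by the description of $\beta_j^*$ the $i$-th coordinate slice of the labeling $g_j[\mathbf{a}_0,\mathbf{a}_1]$ is $g_j[a_0\us{i},a_1\us{i}]$ with $(a_0\us{i},a_1\us{i})\in\beta_j$ for every $i\in[m]$, so $g_j[\mathbf{a}_0,\mathbf{a}_1]\in\al{M}_{\al{A}}(\beta_1,\dots,\beta_k)^*$ by the description of $^*$.

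For the inclusion $\subseteq$ I would apply Theorem~\ref{thm-subprod}(3) in the special case where all factors equal $(\al{A},\chi)$ and the index set is $\mathbf{2}^k$ (so $\prodI\al{A}=\al{A}^{\mathbf{2}^k}[\alpha]$): fixing elements $c\us{i}\in\chi^{-1}(i)$ for $i\in[m]$, take the constant labelings of value $c\us{i}$, which lie in $\al{M}_{\al{A}}(\beta_1,\dots,\beta_k)\cap\bigl(\chi^{-1}(i)\bigr)^{\mathbf{2}^k}$, as the padding elements $d\us{i}$. Then a standard generator $g_j[a_0,a_1]$, all of whose values lie in one class $\chi^{-1}(i_0)$, is sent by the map $\tilde{\phantom{G}}$ of Theorem~\ref{thm-subprod}(3) to $g_j[\mathbf{a}_0,\mathbf{a}_1]$, where $\mathbf{a}_0,\mathbf{a}_1\in\al{C}$ have $i_0$-th coordinates $a_0$ and $a_1$ respectively and agree with $c\us{i}$ in every other coordinate $i$; since $\beta_j$ is reflexive this yields $\mathbf{a}_0\,\beta_j^*\,\mathbf{a}_1$, so the image is a standard generator of $\al{M}_{\al{C}}(\beta_1^*,\dots,\beta_k^*)$. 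As these images generate $\al{M}_{\al{A}}(\beta_1,\dots,\beta_k)^*$ by Theorem~\ref{thm-subprod}(3), the inclusion $\subseteq$, and hence the equality, would follow.

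Next I would show that a congruence $\gamma\in I(0,\alpha)$ has property $(*)$ relative to $\al{M}_{\al{A}}(\beta_1,\dots,\beta_k)$ if and only if $\gamma^*$ has property $(*)$ relative to $\al{M}_{\al{C}}(\beta_1^*,\dots,\beta_k^*)$. For the forward direction, given such a $\gamma$ and an element $F=(\mathbf{a}_v)_{v\in\mathbf{2}^k}$ of $\al{M}_{\al{C}}(\beta_1^*,\dots,\beta_k^*)=\al{M}_{\al{A}}(\beta_1,\dots,\beta_k)^*$ with $F(\epsilon0)\,\gamma^*\,F(\epsilon1)$ for all $\epsilon\in\mathbf{2}^{k-1}\setminus\{1\dots1\}$, each coordinate slice $f\us{i}:=(a_v\us{i})_{v\in\mathbf{2}^k}$ lies in $\al{M}_{\al{A}}(\beta_1,\dots,\beta_k)$ and, by the description of $\gamma^*$, satisfies $f\us{i}(\epsilon0)\,\gamma\,f\us{i}(\epsilon1)$ for the same $\epsilon$; applying $(*)$ for $\gamma$ to each $f\us{i}$ gives $f\us{i}(1\dots10)\,\gamma\,f\us{i}(1\dots11)$ for every $i\in[m]$, that is, $F(1\dots10)\,\gamma^*\,F(1\dots11)$. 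For the converse, given $\gamma^*$ with $(*)$ and $f=(a_v)_{v\in\mathbf{2}^k}\in\al{M}_{\al{A}}(\beta_1,\dots,\beta_k)$ with $f(\epsilon0)\,\gamma\,f(\epsilon1)$ for the relevant $\epsilon$, all $a_v$ lie in a single class $\chi^{-1}(i_0)$, and the labeling $F=(\mathbf{a}_v)_{v\in\mathbf{2}^k}$ whose $i_0$-th coordinate slice is $f$ and whose other slices are the constant labelings of value $c\us{i}$ lies in $\al{M}_{\al{A}}(\beta_1,\dots,\beta_k)^*=\al{M}_{\al{C}}(\beta_1^*,\dots,\beta_k^*)$ and satisfies $F(\epsilon0)\,\gamma^*\,F(\epsilon1)$ for the relevant $\epsilon$ by reflexivity of $\gamma$; applying $(*)$ for $\gamma^*$ to $F$ and reading off coordinate $i_0$ gives $f(1\dots10)\,\gamma\,f(1\dots11)$.

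Finally, \eqref{eq-HC} would follow: by Corollary~\ref{cor-congr}(1) the map $^*$ is an order isomorphism of $I(0,\alpha)$ onto $\Con(\al{C})$, and since $[\beta_1,\dots,\beta_k]\le\beta_k\le\alpha$ it lies in $I(0,\alpha)$; being the least congruence of $\al{A}$ with property $(*)$, it is in particular the least element of $I(0,\alpha)$ with property $(*)$, so by the equivalence above $[\beta_1,\dots,\beta_k]^*$ is the least congruence of $\al{C}$ with property $(*)$ relative to $\al{M}_{\al{C}}(\beta_1^*,\dots,\beta_k^*)$ --- namely $[\beta_1^*,\dots,\beta_k^*]$. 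I expect the main obstacle to be the identification $\al{M}_{\al{A}}(\beta_1,\dots,\beta_k)^*=\al{M}_{\al{C}}(\beta_1^*,\dots,\beta_k^*)$, and within it the verification that, with the constant padding elements, the map $\tilde{\phantom{G}}$ of Theorem~\ref{thm-subprod}(3) carries the standard cube generators of one matrix algebra exactly onto standard cube generators of the other; once that is in hand, the translation of condition $(*)$ and the closing minimality argument are routine.
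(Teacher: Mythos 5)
Your proposal is correct and follows essentially the same route as the paper: establish $\bigl(\al{M}_{\al{A}}(\beta_1,\dots,\beta_k)\bigr)^*=\al{M}_{\al{C}}(\beta_1^*,\dots,\beta_k^*)$ (the paper proves this as Claim~\ref{clm-matrix-algs}), show that condition $(*)$ transfers along $^*$, and conclude by minimality. The only notable variations are local: for $\supseteq$ in the matrix-algebra identity you directly check that the standard generators $h_j[\al{a}_0,\al{a}_1]$ of $\al{M}_{\al{C}}$ lie in $\bigl(\al{M}_{\al{A}}\bigr)^*$ via the coordinate-slice description, which is somewhat more elementary than the paper's preimage argument via Corollary~\ref{cor-A-n-alpha}(1); and for the transfer of $(*)$ you verify it slicewise by hand, while the paper packages the condition as an inclusion of reflexive compatible relations and invokes Corollary~\ref{cor-A-n-alpha}(2) --- both verifications are correct.
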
  

\begin{proof}
  We start by examining the effect of the mapping ${}^\frakC$ from
  Corollary~\ref{cor-A-n-alpha} to the algebra of 
  $(\beta_1,\dots,\beta_k)$-matrices for congruences
  $\beta_1,\dots,\beta_k$ below $\alpha$.

  \begin{clm}
    \label{clm-matrix-algs}
    Under the same assumptions on $\al{A}$, $\chi$, and $\beta_1,\dots,\beta_k$
    as in Theorem~\ref{thm-comm},
  \begin{equation}
    \label{eq-matrix-algs}
  \bigl(\al{M}_{\al{A}}(\beta_1,\dots,\beta_k)\bigr)^\frakC
  = \al{M}_{\al{C}}(\beta_1^\frakC,\dots,\beta_k^\frakC).
  \end{equation}
  \end{clm}

  \begin{proof}[Proof of Claim~\ref{clm-matrix-algs}]
  The standard generating set for
  the algebra $\al{M}_{\al{A}}(\beta_1,\dots,\beta_k)$ of
  $(\beta_1,\dots,\beta_k)$-matrices in $\al{A}$ is the set $G$
  indicated in \eqref{eq-HCgenset-A}. Analogously, the standard generating set
  for the algebra $\al{M}_{\al{C}}(\beta_1^\frakC,\dots,\beta_k^\frakC)$ of
  $(\beta_1^\frakC,\dots,\beta_k^\frakC)$-matrices in $\al{C}$ is the set
  \begin{equation}
    \label{eq-HCgenset-C}
  H=\bigcup_{j\in[k]} H_j
  \quad\text{where}\quad
  H_j:=\{h_j[c_0,c_1]:c_0\,\beta_j^\frakC\,c_1\}
  \ \ \text{for all $j\in[k]$},
  \end{equation}
  where for any elements $c_0,c_1\in C$ with $c_0\,\beta_j^\frakC\,c_1$,
  $h_j[c_0,c_1]$ denotes the labeling $h\colon\mathbf{2}^k\to C$
  of the vertices of the $k$-dimensional cube with elements of $\al{C}$
  in such a way that
  for each $u\in\mathbf{2}$, $h|_{F_j(u)}$ is constant with value
  $c_u$. 

  To see that the left hand side of \eqref{eq-matrix-algs}
  is defined, observe that
  $\al{M}_{\al{A}}(\beta_1,\dots,\beta_k)$ is a subalgebra of
  $\al{A}^{\mathbf{2}^k}[\alpha]=\prodI_{\epsilon\in\mathbf{2}^k}\al{A}$, 
  because the assumption
  $\beta_1,\dots,\beta_k\le\alpha$ ensures that
  $G\subseteq A^{\mathbf{2}^k}[\alpha]$.
  Moreover, $\al{M}_{\al{A}}(\beta_1,\dots,\beta_k)$ is reflexive,
  since $G$ contains all constant labelings $\mathbf{2}^k\to A$.
  Choose and fix one constant labeling
  $g_1[d\us{i},d\us{i}]\,(=\dots=g_k[d\us{i},d\us{i}])$
  with value
  $d\us{i}\,(\in\chi^{-1}(i))$ from each set $D\us{i}$ ($i\in[m]$).
  By Theorem~\ref{thm-subprod}(3), the algebra
  $\bigl(\al{M}_{\al{A}}(\beta_1,\dots,\beta_k)\bigr)^\frakC$ is generated by
  the set
  \begin{equation}
    \label{eq-HCgenset-A*}
  \tilde{G}=\bigcup_{j\in[k]} \tilde{G}_j
  \quad\text{where}\quad
  \tilde{G}_j:=\{h_j[\tilde{a}_0,\tilde{a}_1]:a_0\,\beta_j\,a_1\}
  \ \ \text{for all $j\in[k]$},
  \end{equation}
  because the image of each labeling $g_j[a_0,a_1]\in G$ under the function
  $\tilde{\phantom{G}}\colon \al{A}^{\bf{2}^k}\to \al{C}^{\bf{2}^k}$
  with padding elements $g_1[d\us{i},d\us{i}]$ ($i\in[m]$)
  is the labeling $h_j[\tilde{a}_0,\tilde{a}_1]$, where
  $\tilde{a}_0,\tilde{a}_1$ are obtained from $a_0,a_1$ by applying the
  function $\tilde{\phantom{G}}\colon \al{A}\to \al{C}$
  with padding elements $d\us{i}$ ($i\in[m]$).
  Since $\tilde{G}\subseteq H$, the inclusion $\subseteq$ in
  \eqref{eq-matrix-algs} follows.
  For the reverse inclusion notice that
  $\al{M}_{\al{C}}(\beta_1^\frakC,\dots,\beta_k^\frakC)$ is a reflexive subalgebra of
  $\al{C}^{\mathbf{2}^k}$, hence by Corollary~\ref{cor-A-n-alpha}(1),
  it is equal to $\al{B}^\frakC$ for some reflexive subalgebra $\al{B}$ of
  $\al{A}^{\mathbf{2}^k}[\alpha]$.
  Thus, $\al{B}^\frakC$ is the least reflexive subalgebra of $\al{C}^{\mathbf{2}^k}$
  which contains all generators
  $h_j[c_0,c_1]\in H$ of $\al{M}_{\al{C}}(\beta_1^\frakC,\dots,\beta_k^\frakC)$ from
  \eqref{eq-HCgenset-C}; so $j\in[k]$ and
  \begin{multline*}
    \qquad
  c_0=\begin{bmatrix}c_0\us{1}\\ \vdots\\ c_0\us{m}\end{bmatrix},\ \ 
  c_1=\begin{bmatrix}c_1\us{1}\\ \vdots\\ c_1\us{m}\end{bmatrix}\\
  \text{where}\ \
  c_0\us{i}\,\beta_j\,c_1\us{i} \ \ \text{and}\ \
  c_0\us{i},c_1\us{i}\in\chi^{-1}(i)
  \ \ \text{for each $i\in[m]$}.
  \qquad
  \end{multline*}
  It follows from Corollary~\ref{cor-A-n-alpha}(1)
  that $\al{B}$ is the least reflexive subalgebra of
  $\al{A}^{\mathbf{2}^k}[\alpha]$ which contains the projections of these
  generators to all coordinates $i\in[m]$, which are the functions
    \[
  g_j[c_0\us{i},c_1\us{i}]
    \ \ \text{with}\ \
    c_0\us{i}\,\beta_j\,c_1\us{i}\ \ \text{and}\ \
    c_0\us{i},c_1\us{i}\in\chi^{-1}(i).
  \]
  Since all these functions belong to $G$, and
  $\al{M}_{\al{A}}(\beta_1,\dots,\beta_k)$ is reflexive, we conclude that
  $\al{B}$ is a subalgebra of $\al{M}_{\al{A}}(\beta_1,\dots,\beta_k)$.
  By applying ${}^\frakC$ we get the inclusion $\supseteq$ in \eqref{eq-matrix-algs}.
  This completes the proof of \eqref{eq-matrix-algs}.
  \renewcommand{\qedsymbol}{$\diamond$}
  \end{proof}

  Now we are ready to prove \eqref{eq-HC}.
  Since our assumption $\beta_1,\dots,\beta_k\in I(0,\alpha)$
  implies that $[\beta_1,\dots,\beta_k]\le\beta_k\le\alpha$,
  the commutator $[\beta_1,\dots,\beta_k]$ is the least 
  congruence $\gamma$ of $\al{A}$ in the interval $I(0,\alpha)$
  which satisfies condition $(*)$.
  
  So, let $\gamma$ be a congruence of $\al{A}$ with $\gamma\le\alpha$.
  Condition $(*)$ in the definition of $[\beta_1,\dots,\beta_k]$
  can be expressed by relational clone operations as follows:
  \[
  \al{M}_{\al{A}}(\beta_1,\dots,\beta_k)\cap
  (\gamma\times_\alpha\dots\times_\alpha\gamma\times_\alpha A^2[\alpha])
  \subseteq \gamma\times_\alpha\dots\times_\alpha\gamma\times_\alpha\gamma
  \quad\text{(with $2^{k-1}$ factors)}
  \]
  provided the coordinates in $\mathbf{2}^k$ are listed so that
  $\epsilon0$ and $\epsilon1$ are consecutive for every
  $\epsilon\in\mathbf{2}^{k-1}$, and the last two coordinates are
  $1\dots10$ and $1\dots11$.
  All three compatible relations involved in this condition
  are reflexive and are contained in $A^{\mathbf{2}^k}[\alpha]$.
  Therefore, by Corollary~\ref{cor-A-n-alpha}(2),
  condition $(*)$ holds for $\gamma$ and
  $\al{M}_{\al{A}}(\beta_1,\dots,\beta_k)$ if and only if the analogous condition
  holds for $\gamma^\frakC$ and
  $\bigl(\al{M}_{\al{A}}(\beta_1,\dots,\beta_k)\bigr)^\frakC
  =\al{M}_{\al{C}}(\beta_1^\frakC,\dots,\beta_k^\frakC)$.
  Consequently, $\gamma\in I(0,\alpha)$ is the least congruence of $\al{A}$
  satisfying $(*)$ for $\al{M}_{\al{A}}(\beta_1,\dots,\beta_k)$
  if and only if $\gamma^\frakC$ is the least congruence of
  $\al{C}$ satisfying the analogous condition for
  $\al{M}_{\al{C}}(\beta_1^\frakC,\dots,\beta_k^\frakC)$.
  By the definition of the $k$-ary commutator, and by our remark
  in the preceding paragraph, this proves the desired equality \eqref{eq-HC}.
\end{proof}

For any algebra $\al{A}$ and congruence $\beta$ of $\al{A}$,
there is a largest congruence $\rho$ of $\al{A}$ such that
$[\rho,\beta]=0$.
This congruence is called the
\emph{centralizer of $\beta$}, and is denoted by $(0:\beta)$.
Theorem~\ref{thm-comm}, combined with this definition, immediately implies the
following fact.

\begin{cor}
  \label{cor-centr}
Let $\al{A}$ be an $\mathcal{F}$-algebra
with an onto homomorphism $\chi\colon\al{A}\to\al{I}$, and let
$\al{C}:=\frak{C}(\al{A},\chi)$,
$\alpha:=\ker(\chi)$.
The isomorphism ${}^\frakC$ between the interval $I(0,\alpha)$ of
the congruence lattice of $\al{A}$ and the congruence lattice of
$\al{C}$ (see Corollary~\ref{cor-congr}(1))
preserves centralizers in the following sense:
for any congruence $\beta\in I(0,\alpha)$ of $\al{A}$
we have that
\[
((0:\beta) \wedge\alpha)^\frakC=(0:\beta^\frakC).
\]
\end{cor}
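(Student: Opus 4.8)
The plan is to deduce Corollary~\ref{cor-centr} directly from Theorem~\ref{thm-comm} together with the fact that ${}^*$ is a lattice isomorphism from $I(0,\alpha)$ onto $\Con(\al{C})$ (Corollary~\ref{cor-congr}(1)). The key observation is that under the hypothesis $(0:\beta)\le\alpha$, the centralizer $(0:\beta)$ is not merely the largest congruence $\rho$ of $\al{A}$ with $[\rho,\beta]=0$, but already the largest congruence $\rho$ \emph{in the interval} $I(0,\alpha)$ with $[\rho,\beta]=0$; indeed $(0:\beta)$ itself lies in $I(0,\alpha)$, and it dominates every congruence (in particular every one below $\alpha$) that centralizes $\beta$.

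First I would fix $\beta\in I(0,\alpha)$ with $(0:\beta)\le\alpha$, and let $\rho:=(0:\beta)\in I(0,\alpha)$. The characterization above says: for every $\gamma\in I(0,\alpha)$ we have $[\gamma,\beta]=0$ iff $\gamma\le\rho$. Now apply ${}^*$. Since ${}^*$ is an order isomorphism $I(0,\alpha)\to\Con(\al{C})$ with $0^*=0$ (this is part of Corollary~\ref{cor-congr}(1), as $0^*$ is the bottom of $\Con(\al{C})$), and since by Theorem~\ref{thm-comm} we have $[\gamma,\beta]^*=[\gamma^*,\beta^*]$ for all $\gamma\in I(0,\alpha)$, it follows that for every $\gamma\in I(0,\alpha)$, $[\gamma^*,\beta^*]=0$ iff $[\gamma,\beta]^*=0$ iff $[\gamma,\beta]=0$ iff $\gamma\le\rho$ iff $\gamma^*\le\rho^*$. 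Because every congruence of $\al{C}$ is of the form $\gamma^*$ for a unique $\gamma\in I(0,\alpha)$, this exactly says that $\rho^*$ is the largest congruence $\delta$ of $\al{C}$ with $[\delta,\beta^*]=0$, i.e. $\rho^*=(0:\beta^*)$. Rewriting $\rho=(0:\beta)$ gives $(0:\beta)^*=(0:\beta^*)$, as desired.

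One point that deserves care is that ${}^*$ preserves arbitrary joins (Corollary~\ref{cor-congr}(2)), which is what guarantees that ``largest congruence with a property closed under joins'' is transported correctly; but in the argument above I have avoided needing this directly by phrasing everything in terms of the order isomorphism and the bijectivity of ${}^*$ onto all of $\Con(\al{C})$. The only genuinely necessary inputs are: (a) $(0:\beta)\in I(0,\alpha)$, which is the standing hypothesis $(0:\beta)\le\alpha$; (b) ${}^*$ is an order isomorphism of $I(0,\alpha)$ onto $\Con(\al{C})$ sending $0$ to $0$; and (c) the commutator identity from Theorem~\ref{thm-comm} specialized to two arguments.

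I do not expect a serious obstacle here: the corollary is essentially a formal consequence of Theorem~\ref{thm-comm}. The one mildly delicate point — and the place where the hypothesis $(0:\beta)\le\alpha$ is used in an essential way — is that without it, a congruence $\rho$ of $\al{A}$ that centralizes $\beta$ but is not below $\alpha$ has no image under ${}^*$, so the centralizer of $\beta$ in $\al{A}$ would simply not be visible to the construction; the assumption $(0:\beta)\le\alpha$ is precisely what makes the statement both meaningful and true. So the proof is short: quote the definition of the centralizer, restrict attention to $I(0,\alpha)$ using the hypothesis, and transport the extremal characterization across the isomorphism ${}^*$ using \eqref{eq-HC}.
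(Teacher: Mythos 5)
Your proof is correct and is exactly what the paper has in mind: the paper offers no explicit argument, stating only that Theorem~\ref{thm-comm} combined with the definition of the centralizer ``immediately implies'' the corollary, and your write-up spells out precisely that deduction (transporting the extremal characterization of $(0:\beta)$ across the order isomorphism ${}^*$ using the binary case of \eqref{eq-HC}). Your remark on the necessity of the hypothesis $(0:\beta)\le\alpha$ is also on point.
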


\subsection{Varieties, clones, and Maltsev conditions}
In this subsection let $\var{V}$ be any variety of (nonempty)
$\lngF$-algebras, and let 
$\al{I}=([m],\lngF)$ ($m>0$) be a finite algebra in $\var{V}$.
We will use the notation $\Vcomma$ for the full subcategory of
$\comma$
consisting of those objects $(\al{A},\chi)$ for which
$\al{A}\in\var{V}$.

The discussions following Proposition~\ref{prp-functorP} and
Corollary~\ref{cor-functorC} imply that if $\var{V}$ is the variety
of all (nonempty) $\lngF$-algebras, then the class of all isomorphic copies
of $\hat{\lngF}_{\al{I}}$-algebras in the range
$\frak{C}\Vcomma=\frak{C}\comma=\DAlg^\boxplus(\hat{\lngF_{\al{I}}})$
of the functor
$\frak{C}\colon\comma\to\DAlg^\boxplus(\hat{\lngF_{\al{I}}})$
is the variety $\mathcal{D}^+(\hat{\lngF_{\al{I}}})$  of $\hat{\lngF_{\al{I}}}$-algebras.
We can combine earlier results of this subsection to obtain
an analogous conclusion for all subvarieties $\var{V}$ of the variety
of all $\lngF$-algebras.

\begin{cor}
  \label{cor-var}
If $\var{V}$ is a variety of $\lngF$-algebras and   
$\al{I}=([m],\lngF)$ $(m>0)$ is a finite algebra in $\var{V}$, then
the class
\[
\var{V}^\frakC:=\mathbb{I}\,\frak{C}\Vcomma
\]
of all isomorphic copies of algebras in
$\frak{C}\Vcomma$ is a variety
of $\hat{\lngF}_{\al{I}}$-algebras.
\end{cor}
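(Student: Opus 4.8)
The plan is to show that $\var{V}^*$, defined as the class of isomorphic copies of algebras in $\frak{C}\Vcomma$, is closed under the three class operators $\HH$, $\SSS$, $\PPP$, and then invoke Birkhoff's theorem. We already know from the discussion after Corollary~\ref{cor-functorC} that when $\var{V}$ is the variety of all $\lngF$-algebras, the corresponding class is the variety $\DvarG$ with $\lngG = \lngF_{\al{I}}$; in particular every algebra we care about is a member of $\overline{\DAlg}^{\,+}(\hat{\lngF}_{\al{I}})$ together with the empty algebra, and $\overline{\DAlg}^{\,+}(\hat{\lngF}_{\al{I}})$-membership is preserved under $\HH$, $\SSS$, $\PPP$ of nonempty algebras. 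So it suffices to check that within this ambient variety the subclass $\var{V}^*$ is closed under the three operators, where for products we only need nonempty products (the empty $\hat{\lngF}_{\al{I}}$-algebra lies in $\var{V}^*$ because the empty multisorted algebra has all sorts empty and is the $\frak{M}$-image of the empty $\lngF$-algebra paired with the empty map — here I should double-check the degenerate bookkeeping, but $\var{V}$ contains only nonempty algebras and $\al{I}$ is nonempty, so in fact every object of $\Vcomma$ has nonempty universe and $\var{V}^*$ consists of nonempty algebras together with possibly the empty one obtained as an empty product; I will state this carefully).

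First I would handle $\SSS$ and $\PPP$ using the subalgebra/product correspondence already established. For products: given objects $(\al{A}_j,\chi_j)$ of $\Vcomma$, their product in $\comma$ is $(\prodI_j \al{A}_j, \chi)$, and by Theorem~\ref{thm-subprod} (or rather the functoriality of $\frak{C}$ recorded in Corollary~\ref{cor-functorC}) $\frak{C}$ of that product object is $\prod_j \frak{C}(\al{A}_j,\chi_j)$. Since $\var{V}$ is closed under products, $\prodI_j \al{A}_j$ need NOT lie in $\var{V}$ in general — wait, it is a \emph{subalgebra} of $\prod_j \al{A}_j \in \var{V}$, hence lies in $\var{V}$; and it maps onto $\al{I}$ via $\chi$. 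So $\prod_j \frak{C}(\al{A}_j,\chi_j) = \frak{C}(\prodI_j\al{A}_j,\chi) \in \frak{C}\Vcomma \subseteq \var{V}^*$. For subalgebras: if $\al{D}$ is a subalgebra of $\frak{C}(\al{A},\chi)$ with $(\al{A},\chi)\in\Vcomma$, then by Theorem~\ref{thm-subprod}(1) (the single-factor case, i.e.\ Corollary~\ref{cor-A-n-alpha} with $n=1$, or more directly the description of subobjects in $\comma$ under $\frak{C}$) $\al{D} = \frak{C}(\al{B},\chi|_{\al{B}})^{}$ for a subalgebra $\al{B}\le\al{A}$ with $\chi|_{\al{B}}$ onto; since $\var{V}$ is closed under $\SSS$, $\al{B}\in\var{V}$, so $(\al{B},\chi|_{\al{B}})\in\Vcomma$ and $\al{D}\in\var{V}^*$. (The empty subalgebra, if $\hat{\lngF}_{\al{I}}$ admitted it, is excluded because $\hat{\lngF}_{\al{I}}$ has the $m$-ary $d$ but no nullary symbols, and $m\ge1$, so every subuniverse containing nothing is still a subuniverse — but $C = \prod D\us{i}$ is nonempty, and in fact the only way to get the empty algebra is an empty product, already handled.)

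The main obstacle is closure under $\HH$. Here I would use Corollary~\ref{cor-quotient}: every homomorphic image of $\al{C} = \frak{C}(\al{A},\chi)$ is $\al{C}/\theta$ for some $\theta\in\Con(\al{C})$, and by Corollary~\ref{cor-congr}(1) $\theta = \beta^*$ for a unique $\beta\in I(0,\alpha)$ with $\alpha = \ker\chi$; then Corollary~\ref{cor-quotient} gives $\al{C}/\beta^* \cong \frak{C}(\al{A}/\beta, \chi/\beta)$. Since $\var{V}$ is closed under $\HH$, $\al{A}/\beta\in\var{V}$, and $\chi/\beta$ is onto $\al{I}$, so $(\al{A}/\beta,\chi/\beta)\in\Vcomma$ and $\al{C}/\beta^*\in\var{V}^*$. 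The only subtlety is that a priori a homomorphic image of an algebra in $\var{V}^*$ is a homomorphic image of an \emph{isomorphic copy} of some $\frak{C}(\al{A},\chi)$, but that is harmless since $\var{V}^*$ is by definition closed under isomorphism. Putting $\HH$, $\SSS$, $\PPP$ together: $\var{V}^*$ is a class of $\hat{\lngF}_{\al{I}}$-algebras closed under $\HH$, $\SSS$, $\PPP$, hence by Birkhoff's HSP theorem it is a variety. I would also remark that this gives an alternative route: $\var{V}^*$ is the subclass of $\DvarG$ (with $\lngG=\lngF_{\al{I}}$) cut out by the extra identities forcing the ``coordinate'' $\lngF$-terms to satisfy the defining identities of $\var{V}$ — using Theorem~\ref{thm-clone-constr} to translate identities of $\var{V}$ into identities of $\hat{\lngF}_{\al{I}}$-terms — which exhibits $\var{V}^*$ as equationally defined directly; but the HSP argument is shorter and I would present that as the proof.
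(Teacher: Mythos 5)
Your core argument is the same as the paper's: closure of $\var{V}^*$ under $\SSS$ and $\PPP$ is obtained from the correspondence between subobjects/products in $\comma$ and subalgebras/products of the $\frak{C}$-images (i.e.\ the material surrounding Theorem~\ref{thm-subprod}), and closure under $\HH$ is obtained by combining Corollary~\ref{cor-congr} (every congruence of $\frak{C}(\al{A},\chi)$ is $\beta^*$ for a unique $\beta\le\ker\chi$) with Corollary~\ref{cor-quotient} ($\frak{C}(\al{A},\chi)/\beta^*\cong\frak{C}(\al{A}/\beta,\chi/\beta)$), then invoking Birkhoff. One small correction to your parenthetical bookkeeping: the empty $\hat{\lngF}_{\al{I}}$-algebra cannot arise as an empty product (an empty product is the one-element terminal algebra, not the empty algebra), nor is it $\frak{C}$ of any object of $\Vcomma$ (since $\chi$ must be onto $\al{I}=[m]$ with $m>0$, the universe of $\frak{C}(\al{A},\chi)=\prod_i\chi^{-1}(i)$ is always nonempty); if one insists on the convention that a variety in a nullary-free language contains the empty algebra, it must simply be adjoined by fiat, but this is a peripheral convention issue that the paper also glosses over.
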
  

\begin{proof}
  The fact that $\var{V}^\frakC$ is closed under taking products and subalgebras
  follows from the discussion preceding Theorem~\ref{thm-subprod} and from
  the first paragraph of the proof of that theorem.
  To show that $\var{V}^\frakC$ is closed under taking quotients, consider
  an algebra $\al{C}$ in $\var{V}^\frakC$ and a congruence $\gamma$ of $\al{C}$.
  Since $\al{C}$ is isomorphic to an algebra of the form
  $\frak{C}(\al{A},\chi)$ for some $\al{A}\in\var{V}$ and some onto homomorphism
  $\chi\colon\al{A}\to\al{I}$, we may assume without loss of generality that
  $\al{C}$ is actually equal to $\frak{C}(\al{A},\chi)$.
  By Corollary~\ref{cor-congr}, $\gamma=\beta^\frakC$ for a unique congruence
  $\beta$ of $\al{A}$ such that $\beta\le\ker(\chi)$.
  Now Corollary~\ref{cor-quotient} shows that
  $\al{C}/\beta^\frakC\cong\frak{C}(\al{A}/\beta,\chi/\beta)\in\var{V}^\frakC$,
  completing the proof.
\end{proof}  

  In the next theorem we exhibit a
relationship between free algebras
$\al{F}_{\var{V}^\frakC}(X)$ in the variety $\var{V}^\frakC$ and
free algebras in $\var{V}$.
The following notation will be useful.
For any set $X$ we define $\wec{X}^\flat$ to be the set
$X\times[m]$, but we will use the notation $x^{(i)}$ for its element $(x,i)$
for every $x\in X$ and $i\in[m]$. 
Furthermore, we define
\begin{align*}
\wec{x}&:=\begin{bmatrix} x\us{1}\\ \vdots\\ x\us{m} \end{bmatrix}
\ \text{for each $x\in X$, and}\\
  \wec{X}&:=\{\wec{x}:x\in X\}.
\end{align*}

\begin{thm} \label{thm-free}
  Let $\var{V}$ be a variety of $\lngF$-algebras, let
  $\al{I}=([m];\mathcal{F})$ $(m>0)$ be a finite algebra in $\var{V}$,
  and let $\var{V}^\frakC$ be the corresponding
  variety of $\hat{\lngF}_{\al{I}}$-algebras
  defined in Corollary~\ref{cor-var}.
    For any non-empty set $X$ we have that
    \begin{equation}
      \label{eq-free1}
      \al{F}_{\var{V}^\frakC}(X) \cong
      \frak{C}\bigl( \al{F}_{\var{V}}(\wec{X}^\flat),\xi \bigr)
    \end{equation}
    where $\xi$ is the unique homomorphism
    $\al{F}_{\var{V}}(\wec{X}^\flat) \to \al{I}$ in $\var{V}$ that extends
    the set map
    $\wec{X}^\flat\to[m],\ x^{(i)} \mapsto i$.
\end{thm}

\begin{proof}
  Let $\al{F} := \frak{C}\bigl( \al{F}_{\var{V}}(\wec{X}^\flat),\xi \bigr)$.
  It follows from the definitions of $\frak{C}$ and $\wec{X}$ that
  $\wec{X}\subseteq F$. Since $X\to\wec{X}$, $x\mapsto\wec{x}$,
  is a bijection,
  to prove \eqref{eq-free1} it suffices to show that $\al{F}$ is a
  free algebra in $\var{V}^\frakC$ with free generating set $\wec{X}$.

 To this end let $\al{C}\in\var{V}^\frakC$. Then $\al{C}$ is isomorphic to $\frak{C}(\al{A},\chi)$ for some $\al{A}$ in
 $\var{V}$ and some onto homomorphism
 $\chi\colon \al{A}\to\al{I}$.
 Without loss of generality assume $\al{C}=\frak{C}(\al{A},\chi)$, and
 consider an arbitrary set map
 $G\colon \wec{X} \to C = \prod_{i\in [m]} \chi^{-1}(i)$.
 Our goal is to show that $G$ extends to a unique homomorphism
 $\al{F}\to\al{C}$.
 
 Note first that
 there exists a (unique) set map $g\colon\wec{X}^\flat\to A$ such that
 \[
 G(\wec{x}) =
 \begin{bmatrix} g(x\us{1}) \\ \vdots \\ g(x\us{m}) \end{bmatrix}
\text{ and $g(x\us{i})\in\chi^{-1}(i)$}
 \text{ for all } x\in X \text{ and } i\in [m].
 \]
 Thus, $g\colon \wec{X}^\flat \to A$ extends uniquely to a homomorphism
 $\psi\colon\al{F}_{\var{V}}(\wec{X}^\flat)\to\al{A}$. Moreover, $\psi$ satisfies
 the equality $\chi\circ \psi = \xi$, because it follows from our
 definitions that
 \[ \chi(\psi(x\us{i})) = \chi(g(x\us{i})) = i = \xi(x\us{i})
 \text{ for all } x\us{i} \in \wec{X}^\flat. \]
 Hence $\psi$ is a morphism
   $\bigl(\al{F}_{\var{V}}(\wec{X}^\flat),\xi\bigr)\to(\al{A},\chi)$
 in $\Vcomma$.
 Thus $\frak{C}(\psi)$ is a homomorphism $\al{F}\to\al{C}$
 by Corollary~\ref{cor-functorC}.
  By its construction, $\frak{C}(\psi)$ extends $G$.

 To show that this extension is unique,
 consider any homomorphism $\Phi\colon \al{F}\to\al{C}$
 that extends $G$. By Corollary~\ref{cor-functorC},
 $\Phi=\frak{C}(\phi)$
   for some morphism 
   $\phi\colon\bigl(\al{F}_{\var{V}}(\wec{X}^\flat),\xi\bigr)\to(\al{A},\chi)$
   in $\Vcomma$.
 It follows that $\phi$ extends $g$.
 Hence $\phi = \psi$ since $\al{F}_{\var{V}}(\wec{X}^\flat)$ is free in
 $\var{V}$ with free generating set $\wec{X}^\flat$.
   Thus $\Phi={}\frak{C}(\phi)=\frak{C}(\psi)$.
 This concludes the proof of the theorem.
\end{proof}  

  Recall from \cite{taylor}
  that the \emph{clone of a variety $\var{W}$} is the multisorted
  algebra with sorts indexed by positive integers $k$, the $k$-th sort
  being the set of all \emph{$k$-ary terms of $\var{W}$}
  (i.e., all terms in the language of $\var{W}$
  in the first $k$ variables $v_1,\dots,v_k$,
  modulo the identities true in $\var{W}$),
  and the operations are superpositions of terms and nullary operations
  naming the variables in each sort.
  Since for every positive integer $k$,
  the $k$-ary terms of $\var{W}$
  are in one-to-one correspondence with the elements of the free algebra
  $\al{F}_{\var{W}}(X)$ for a $k$-element set $X$,
  the isomorphism \eqref{eq-free1} in Theorem~\ref{thm-free}
  yields a description for the clone of the variety
  $\var{V}^\frakC$ in terms of the clone of $\var{V}$.
    
  We state this description in Corollary~\ref{cor-terms}
  below.
  We will use the matrix notation introduced in our discussion following
  Theorem~\ref{prp-functorP}. Furthermore,
  $\wec{e}_{m\times k}$ will denote
  the $m\times k$ matrix in which the $i$-th row is $[i\ i\ . . .\ i]$
  for every $i\in[m]$.

\begin{cor}
  \label{cor-terms}
Let $\var{V}$, $\al{I}$, and $\var{V}^\frakC$ be the same as in
Theorem~\ref{thm-free}.
For any positive integer $k$,
there is a bijection
$\displaystyle
  T(\al{x})\mapsto
  \begin{bmatrix}
    t\us{1}(\al{x})\\
    \vdots\\
    t\us{m}(\al{x})
  \end{bmatrix}
$  
between
\begin{itemize}
\item
  the $k$-ary terms $T$ of $\var{V}^\frakC$ and
\item  
the $m$-tuples $t\us{1},\dots,t\us{m}$ 
of $mk$-ary terms of $\var{V}$ satisfying
\begin{equation}
  \label{eq-term-restr}
    \text{$t\us{i}(\wec{e}_{m\times k})=i$ in $\al{I}$ for all $i\in[m]$}
\end{equation}
\end{itemize}
so that the following equality holds in the algebra
    $\al{C}:=\frak{C}(\al{A},\chi)$ for
    each $\al{A}\in\var{V}$ with an onto homomorphism
    $\chi\colon\al{A}\to\al{I}$:
  \begin{equation}
    \label{eq-term-eq}
    \qquad
  T(\al{a})=
  \begin{bmatrix}
    t\us{1}(\al{a})\\
    \vdots\\
    t\us{m}(\al{a})
  \end{bmatrix}
  \quad
  \text{for every $m\times k$ matrix
    $\al{a}\in\Bigl(\prod_{i\in[m]}\chi^{-1}(i)\Bigr)^k=C^k$,}
  \end{equation}
  where $T$ is applied to the $k$ columns of $\al{a}$, 
  and $t\us{1},\dots,t\us{m}$ are applied to the $mk$ entries of $\al{a}$. 
\end{cor}

For every $k$-ary term $T$ of $\var{V}^\frakC$,
the $mk$-ary terms $t\us{i}$ ($i\in[m]$) of $\var{V}$ %in \eqref{eq-term-corr}
satisfying conditions
\eqref{eq-term-restr}--\eqref{eq-term-eq} above
will be referred to as \emph{the coordinate terms of $T$}.

\begin{cor}
  \label{cor-clone-fingen}
Let $\var{V}$, $\al{I}$, and $\var{V}^\frakC$ be the same as in
Theorem~\ref{thm-free}.
If the clone of $\var{V}$ is finitely generated, then so is the clone
of $\var{V}^\frakC$.
\end{cor}

\begin{proof}
If the clone of $\var{V}$ is finitely generated, then
$\var{V}$ is equivalent to its reduct $\var{V}^\circ$
to a finite sublanguage $\lngF^\circ$ of $\lngF$.
Clearly, the reduct $\al{I}^\circ=([m];\lngF^\circ)$ of $\al{I}$
belongs to $\var{V}^\circ$, and the variety
$(\var{V}^\circ)^\frakC=
\mathbb{I}\,\bigl(\var{V}^\circ\,{\downdownarrows}\,\al{I}^\circ\bigr)$
has a finite language, $\widehat{\lngF^\circ}_{\al{I}^\circ}$.
Since $\var{V}$ and $\var{V}^\circ$ are equivalent varieties, i.e., they have
the same terms, and any $m$-tuple $t\us{1},\dots,t\us{m}$ of $mk$-ary terms
among them satisfies condition \eqref{eq-term-restr}
for $\al{I}$ if and only if it does so
for $\al{I}^\circ$, Corollary~\ref{cor-terms} shows that
$\var{V}^\frakC$ and $(\var{V}^\circ)^\frakC$ are equivalent varieties.
As the latter variety has a finite language, their clones are
finitely generated.
\end{proof}  

  Corollaries~\ref{cor-terms}--\ref{cor-clone-fingen} carry over easily
  from varieties to single algebras.

  \begin{cor}
    \label{cor-termops}
  Let $(\al{A},\chi)$ be an object of $\comma$, and
  let $\al{C}:=\frak{C}(\al{A},\chi)$.
  \begin{enumerate}
  \item[{\rm(1)}]
  A function
  $T\colon C^k\to C$ (where $C=\prod_{i\in[m]}\chi^{-1}(i)$)
  is a $k$-ary term operation of $\al{C}$
  if and only if $\al{A}$ has $mk$-ary term operations
  $t\us{1},\dots,t\us{m}$ satisfying condition \eqref{eq-term-restr}
  such that \eqref{eq-term-eq} holds.
  \item[{\rm(2)}]
  If the clone of $\al{A}$ is finitely generated,
  then so is the clone of $\al{C}$.
  \end{enumerate}
  \end{cor}

Corollary~\ref{cor-terms}
also allows us to transfer idempotent terms from $\var{V}$
to $\var{V}^\frakC$.

\begin{cor}
  \label{cor-clone-idem}
 Let $\var{V}$, $\al{I}$, and $\var{V}^\frakC$ be the same as in
Theorem~\ref{thm-free}.
  \begin{enumerate}
  \item[{\rm(1)}]
    The map
    \[
    t(x_1,\dots,x_k)\mapsto
    \breve{t}(\wec{x}_1,\dots,\wec{x}_k):=
    \begin{bmatrix}
      t(x\lsus{1}{1},\dots,x\lsus{k}{1})\\
      \vdots\\
      t(x\lsus{1}{m},\dots,x\lsus{k}{m})
      \end{bmatrix},
    \]
    which assigns to every $k$-ary idempotent term
    $t(x_1,\dots,x_k)$ of $\var{V}$
    the $k$-ary idempotent term $\breve{t}(\wec{x}_1,\dots,\wec{x}_k)$ of
    $\var{V}^\frakC$ with coordinate terms $t(x\lsus{1}{i},\dots,x\lsus{k}{i})$
    $(i\in[m])$,
    is a clone homomorphism of the clone of
    all idempotent terms of $\var{V}$ into the clone of
    all idempotent terms of $\var{V}^\frakC$.
  \item[{\rm(2)}]
    Hence, $\var{V}^\frakC$
    satisfies every idempotent Maltsev condition
    that holds in $\var{V}$.
  \end{enumerate}
\end{cor}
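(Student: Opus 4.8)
The plan is to obtain~(1) directly from Theorem~\ref{thm-clone-constr}, the description of $\var{V}^*$ in Corollary~\ref{cor-var}, and the idempotence of the terms involved, and then to deduce~(2) from~(1) via the standard reformulation of idempotent Maltsev conditions in terms of clone homomorphisms.

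First I would fix a $k$-ary idempotent term $t(x_1,\dots,x_k)$ of $\var{V}$, write $\wec{x}$ for the $m\times k$ matrix of variables $x\lsus{j}{i}$, and put $t\us{i}(\wec{x}):=t(x\lsus{1}{i},\dots,x\lsus{k}{i})$ for $i\in[m]$. Since $t$ is idempotent and $\al{I}\in\var{V}$, evaluating in $\al{I}$ gives $t\us{i}(\wec{e}_{m\times k})=t(i,\dots,i)=i$, so $t\us{1},\dots,t\us{m}$ satisfy condition~(i) of Theorem~\ref{thm-clone-constr}; by part~(2) of that theorem there is a $k$-ary $\hat{\lngF}_{\al{I}}$-term with coordinate terms $t\us{1},\dots,t\us{m}$, and I define $\breve{t}$ to be any such term. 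Two $\hat{\lngF}_{\al{I}}$-terms with the same coordinate terms induce the same operation on every algebra $\frak{C}(\al{A},\chi)$ with $\al{A}\in\var{V}$, since both send an $m\times k$ matrix $\al{a}\in C^k$ to the column whose $i$-th entry is $t\us{i}(\al{a})$ by Theorem~\ref{thm-clone-constr}(1)(ii); and by Corollary~\ref{cor-var} these algebras, up to isomorphism, are exactly the members of $\var{V}^*$. Hence $\breve{t}$ is a well-defined element of the clone of $\var{V}^*$. Evaluating $\breve{t}$ on the matrix all of whose columns equal a single $\wec{c}\in C$ (with $i$-th coordinate $c\us{i}$) gives, in coordinate $i$, the value $t(c\us{i},\dots,c\us{i})=c\us{i}$, so $\breve{t}$ is idempotent; thus $t\mapsto\breve{t}$ maps the clone of idempotent terms of $\var{V}$ into the clone of idempotent terms of $\var{V}^*$.

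Next I would check that $t\mapsto\breve{t}$ is a clone homomorphism, and by the previous paragraph it suffices to verify the relevant identities on each $\al{C}=\frak{C}(\al{A},\chi)$ with $\al{A}\in\var{V}$. If $t(x_1,\dots,x_k)=x_j$ is the $j$-th projection of $\var{V}$, its coordinate terms are $x\lsus{j}{i}$, so $\breve{t}$ has $i$-th coordinate $x\lsus{j}{i}$ in every $\al{C}$, i.e., $\breve{t}=\wec{x}_j$ is the $j$-th projection of $\var{V}^*$. For composition, let $s_1,\dots,s_k$ be $n$-ary idempotent terms of $\var{V}$ and $r:=t(s_1,\dots,s_k)$ their composite with $t$; then $r$ is idempotent, so $\breve{r}$ is defined. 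For an $m\times n$ matrix $\al{b}\in C^n$, each column $\breve{s_\ell}(\al{b})$ lies in $C$ because $s_\ell$ is idempotent, whence its coordinate terms satisfy condition~(i)$'$; the $i$-th entry of $\breve{s_\ell}(\al{b})$ is $s_\ell(\al{b}\us{i})$ by Theorem~\ref{thm-clone-constr}(1)(ii), so if $\al{c}$ denotes the $m\times k$ matrix with columns $\breve{s_1}(\al{b}),\dots,\breve{s_k}(\al{b})$, then the $i$-th entry of $\breve{t}(\al{c})$ is $t(\al{c}\us{i})=t(s_1(\al{b}\us{i}),\dots,s_k(\al{b}\us{i}))=r(\al{b}\us{i})$, which is the $i$-th entry of $\breve{r}(\al{b})$. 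Hence $\breve{t}(\breve{s_1},\dots,\breve{s_k})$ and $\breve{r}$ agree on $\al{C}$, so $t\mapsto\breve{t}$ preserves composition, proving~(1).

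For~(2), recall that $\var{V}$ satisfies a strong idempotent Maltsev condition over operation symbols $f_1,\dots,f_r$ with a set $\Sigma$ of defining identities precisely when there are idempotent terms $\hat{f}_1,\dots,\hat{f}_r$ of $\var{V}$ realizing $\Sigma$, equivalently, when there is a clone homomorphism from the idempotent clone presented by $(f_\bullet,\Sigma)$ into the clone of idempotent term operations of $\var{V}$. Composing such a clone homomorphism with $t\mapsto\breve{t}$ from~(1) yields a clone homomorphism into the clone of idempotent term operations of $\var{V}^*$: the idempotent terms $\breve{\hat{f}_1},\dots,\breve{\hat{f}_r}$ of $\var{V}^*$ realize $\Sigma$, because a clone homomorphism carries each side of an identity in $\Sigma$ --- a composite of the $\hat{f}_\ell$'s and projections --- to the corresponding composite of the $\breve{\hat{f}_\ell}$'s and projections. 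Thus $\var{V}^*$ satisfies the condition, and since every idempotent Maltsev condition is a conjunction of strong ones (a disjunction being witnessed by one of its strong disjuncts), $\var{V}^*$ satisfies every idempotent Maltsev condition of $\var{V}$. The one point that needs care --- and the main obstacle --- is the well-definedness argument in the second paragraph: that $\breve{t}$ is legitimately an operation of the variety $\var{V}^*$ rather than merely of the individual algebras $\frak{C}(\al{A},\chi)$ (this rests on Corollary~\ref{cor-var}), and that idempotence of $t$ and of $s_1,\dots,s_k$ is exactly what licenses Theorem~\ref{thm-clone-constr}(2) via condition~(i) and keeps all intermediate columns inside $C=\prod_{i\in[m]}D\us{i}$ during the composition computation; the remainder is routine unwinding of the identity~(ii).
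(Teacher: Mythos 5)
Your proof is correct and follows essentially the same route as the paper: you derive the term $\breve{t}$ from Theorem~\ref{thm-clone-constr}(2) after using idempotence of $t$ to verify condition~(i) for the coordinate terms, check that $t\mapsto\breve{t}$ is a clone homomorphism of idempotent clones, and then appeal for~(2) to the preservation of idempotent Maltsev conditions under clone homomorphisms. You helpfully spell out the well-definedness of $\breve{t}$ as an element of the clone of $\var{V}^*$ (since Theorem~\ref{thm-clone-constr}(2) produces some term, not a canonical one) and the projection/composition checks that the paper dismisses as ``straightforward''; the only minor slip is describing a Maltsev condition as a ``conjunction'' rather than a disjunction of strong conditions in~(2), though your parenthetical already supplies the correct disjunction argument.
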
  

\begin{proof}
  Since $t$ is an idempotent term, i.e., the identity $t(x,\dots,x)\approx x$
  holds in $\var{V}$, the coordinate terms $t(x\lsus{1}{i},\dots,x\lsus{k}{i})$
  satisfy $t(i,\dots,i) = i$ for every $i\in [m]$. Thus 
    the coordinate terms $t(x\lsus{1}{i},\dots,x\lsus{k}{i})$ satisfy
    condition \eqref{eq-term-restr}. It follows from
    Corollary~\ref{cor-terms} that
    $\var{V}^\frakC$ has a term $\breve{t}$ with these terms as coordinate terms.
   
  Clearly, $\breve{t}$ is an idempotent term of $\var{V}^\frakC$. It is also
  straightforward to check that the map $t\mapsto\breve{t}$ is a clone
  homomorphism. This proves statement (1).

  For (2), recall that a strong Maltsev condition
  is a primitive positive sentence in the language of clones, while a
  Maltsev condition is a disjunction of a weakening infinite sequence
  of strong Maltsev conditions.
  Since the satisfaction of conditions of this form
  is preserved under homomorphisms,
  we get from part (1) that $\var{V}^\frakC$ satisfies every idempotent
  Maltsev condition that holds in $\var{V}$.
\end{proof}  

We conclude this subsection by recording some finiteness properties
of varieties preserved by $\frak{C}$.

\begin{cor}
  \label{cor-presentations}
Let $\var{V}$, $\al{I}$, and $\var{V}^\frakC$ be the same as in
  Theorem~\ref{thm-free}.
\begin{enumerate}
\item[{\rm(1)}]
  $\var{V}$ is locally finite if and only if $\var{V}^\frakC$ is locally finite.
\item[{\rm(2)}]  
  For every algebra $\al{A}\in\var{V}$ with an onto homomorphism
  $\chi\colon\al{A}\to\al{I}$ we have that
  $\al{A}$ is finitely presented in $\var{V}$ if and only if the algebra
  $\frak{C}(\al{A},\chi)$ is finitely presented in
  $\var{V}^\frakC$.
\end{enumerate}

\end{cor}  

\begin{proof}
  We will use the notation introduced in the paragraph
    preceding Theorem~\ref{thm-free}.

    Statement~(1) will follow if we prove that for every finite
    set $X$,
    \begin{align*}
      |\al{F}_{\var{V}^\frakC}(X)| & \le |\al{F}_{\var{V}}(\wec{X}^\flat)|^m,
      \quad\text{and}\\
      |\al{F}_{\var{V}}(\wec{X}^\flat)| & \le m|\al{F}_{\var{V}^\frakC}(X)|,
    \end{align*}
    where
    $|\wec{X}^\flat|=m|X|$. The first inequality is an immediate consequence
    of the isomorphism in \eqref{eq-free1}. For the second equality,
    combine \eqref{eq-free1} with the observation that
    every element of $\al{F}_{\var{V}}(\wec{X}^\flat)$
    belongs to $\xi^{-1}(i)$ for some $i\in[m]$.
    
    To prove (2), let $\al{A}$ be an algebra in $\var{V}$ with an
    onto homomorphism $\chi\colon\al{A}\to\al{I}$. By the definition of
    $\var{V}^\frakC$ (see Corollary~\ref{cor-var}), we have
    $\frak{C}(\al{A},\chi)\in\var{V}^\frakC$.
    Assume first that $\frak{C}(\al{A},\chi)$
    is finitely presented in $\var{V}^\frakC$, that is, for some
    finite set $X$, $\frak{C}(\al{A},\chi)$
    is a quotient of $\al{F}_{\var{V}^\frakC}(X)$ by a
    finitely generated congruence. 
    By Theorem~\ref{thm-free} and Corollary~\ref{cor-congr}, this is
    equivalent to the condition that
\[
\frak{C}(\al{A},\chi)
\cong \frak{C}\bigl(\al{F}_{\var{V}}(\wec{X}^\flat),\xi\bigr)/\beta^\frakC
\]
for some finite set $X$ and some finitely generated congruence
$\beta\leq \ker(\xi)$ of $\al{F}_{\var{V}}(\wec{X}^\flat)$.
By Corollary~\ref{cor-quotient}, the last displayed isomorphism can be
rewritten as follows:  
\[
\frak{C}(\al{A},\chi) \cong
\frak{C}\bigl(\al{F}_{\var{V}}(\wec{X}^\flat)/\beta, \xi/\beta\bigr).
\]
Since $\frak{C}$ is an isomorphism between the category $\Vcomma$ and
its image in $\var{V}^\frakC$ (see
Corollaries~\ref{cor-functorC} and \ref{cor-var}),
it follows that $\al{F}_{\var{V}}(\wec{X}^\flat)/\beta \cong \al{A}$.
Hence, $\al{A}$ is finitely presented in $\var{V}$.

  Conversely assume that $\al{A}\in\var{V}$ is finitely presented in $\var{V}$.
  That is, we have a finite set $X$ and an onto homomorphism
  $h\colon \al{F}_{\var{V}}(X) \to \al{A}$ such that $\ker(h)$ is
  a finitely generated congruence of $\al{F}_{\var{V}}(X)$.
  For each $i\in[m]$ fix some
element $d\us{i}\in\chi^{-1}(i)\,(\subseteq A)$.
Let $h'$ denote the unique homomorphism
$h'\colon \al{F}_{\var{V}}(\wec{X}^\flat) \to \al{A}$ that extends the set
map $\wec{X}^\flat\to A$ defined by
\[
h'(x\us{i}) = \begin{cases} h(x) & \text{if } h(x)\in\chi^{-1}(i), \\
  d\us{i} & \text{otherwise.} \end{cases}
\]
  Since $h=h'\circ\iota$ holds for the injection
  $\iota\colon X\to\wec{X}^\flat$, $x\mapsto x^{\chi(h(x))}$,
  it follows that, up to the isomorphism
  of $\al{F}_{\var{V}}(X)$ and $\al{F}_{\var{V}}\bigl(\iota(X)\bigr)$
  induced by $\iota$, $h$ and the restriction of $h'$ to
  $\al{F}_{\var{V}}\bigl(\iota(X)\bigr)$ are the same homomorphisms onto
  $\al{A}$.
  Hence, $\al{F}_{\var{V}}(X)/\ker(h)\cong\al{A}\cong
    \al{F}_{\var{V}}(\wec{X}^\flat)/\ker(h')$.
    Since $\ker(h)$ is finitely generated and
    $\wec{X}^\flat$ is finite, we get that
  $\beta := \ker(h')$
  is a finitely generated congruence of $\al{F}_{\var{V}}(\wec{X}^\flat)$
  and $\al{F}_{\var{V}}(\wec{X}^\flat)/\beta\cong\al{A}$ is finitely presented.

 For $\xi := \chi\circ h'$, Theorem~\ref{thm-free} implies that
$\al{F}_{\var{V}^\frakC}(X)\cong\frak{C}\bigl(\al{F}_{\var{V}}(\wec{X}^\flat),\xi\bigr)$
is a finitely generated free algebra in $\var{V}^\frakC$.
Since $\beta\leq\ker(\xi)$, Corollary~\ref{cor-congr} yields that
$\beta^\frakC$ is a finitely generated congruence
of $\frak{C}\bigl(\al{F}_{\var{V}}(\wec{X}^\flat),\xi\bigr)$.
Now, Corollary~\ref{cor-quotient} yields the leftmost $\cong$ below 
\[
\frak{C}\bigl(\al{F}_{\var{V}}(\wec{X}^\flat), \xi\bigr)/\beta^\frakC
\cong\frak{C}\bigl(\al{F}_{\var{V}}(\wec{X}^\flat)/\beta, \xi/\beta\bigr)
\cong \frak{C}(\al{A},\chi),
\]
while the rightmost $\cong$ follows from the fact that
$\bigl(\al{F}_{\var{V}}(\wec{X}^\flat)/\beta, \xi/\beta\bigr)$
and $(\al{A},\chi)$ are isomorphic objects in $\Vcomma$, mediated by
the onto homomorphism $h'$ with kernel $\beta$.
This proves that $\frak{C}(\al{A},\chi)$ is a finitely presented algebra in
$\var{V}^\frakC$.
\end{proof}

\subsection{Polynomial operations and TCT types}

For any object $(\al{A},\chi)$ of $\comma$, we have 
a description for the term operations
of the algebra $\frak{C}(\al{A},\chi)$ via 
term operations of $\al{A}$ in Corollary~\ref{cor-termops}(1).
The next corollary presents an analogous description
for the polynomial operations of $\frak{C}(\al{A},\chi)$.

\begin{cor}
  \label{cor-poly}
  Let $(\al{A},\chi)$ be an object of $\comma$, and
  let $\al{C}:=\frak{C}(\al{A},\chi)$.
  A function
  $P\colon C^k\to C$ (where $C=\prod_{i\in[m]}\chi^{-1}(i)$)
  is a $k$-ary polynomial operation of $\al{C}$
  if and only if $\al{A}$ has $mk$-ary polynomial operations
  $p\us{1},\dots,p\us{m}$ such that for every $m\times k$ matrix
  $\al{a}\in\bigl(\prod_{i\in[m]} \chi^{-1}(i)\bigr)^k=C^k$,
  \begin{equation}
    \label{eq-poly}
  p\us{i}(\al{a})\in \chi^{-1}(i)\ \ \text{for every $i\in[m]$,\quad
    and\quad }
    P(\al{a})=
  \begin{bmatrix}
    p\us{1}(\al{a})\\
    \vdots\\
    p\us{m}(\al{a})
  \end{bmatrix}.
  \end{equation}
\end{cor}

\begin{proof}
  We want to deduce this statement from
    Corollary~\ref{cor-termops}(1)
    by using the fact that for every
  algebra $\al{U}$ the polynomial operations
  of $\al{U}$ are the term operations of the constant expansion of $\al{U}$.
  In keeping with our convention of excluding nullary symbols, we define the
  \emph{constant expansion} of an algebra
  $\al{U}=(U;\mathcal{L})$ to be the algebra
  $\al{U}^\ccc=(U;\mathcal{L}\cup\{\ccc_u:u\in U\})$
  where each $\ccc_u$ is a unary symbol, and is interpreted as the unary
  constant operation on $U$ with value $u$.

  Now let $(\al{A},\chi)$ satisfy the assumptions of Corollary~\ref{cor-poly},
  and let $\al{A}^\ccc$ be the constant expansion of $\al{A}$.
  The language of $\al{A}^\ccc$ is $\lngF^\ccc=\lngF\cup\{\ccc_a:a\in A\}$.
  Since $\chi$ is an onto homomorphism $\al{A}\to\al{I}$, we can
  expand $\al{I}$ to get an $\lngF^\ccc$-algebra
  $\al{I}^\ccc=([m],\lngF^\ccc)$ by interpreting each symbol $\ccc_a$ ($a\in A$)
  as the unary constant function with value $\chi(a)$ on $[m]$.
  Thus, $\chi$ is an onto homomorphism $\al{A}^\ccc\to\al{I}^\ccc$, and
  $(\al{A}^\ccc,\chi)$ is an object of the category
  $\bigl(\Alg(\lngF^\ccc)\,{\downdownarrows}\,\al{I}^\ccc\bigr)$.
  By applying the functors $\frak{C}$ with domain categories
  $\comma$ and $\bigl(\Alg(\lngF^\ccc)\,{\downdownarrows}\,\al{I}^\ccc\bigr)$
  to the objects $(\al{A},\chi)$ and $(\al{A}^\ccc,\chi)$, respectively,
  we get the $\hat{\lngF}_{\al{I}}$-algebra $\frak{C}(\al{A},\chi)$
  and the $\hat{\lngF^\ccc}_{\al{I}^\ccc}$-algebra
  $\frak{C}(\al{A}^\ccc,\chi)$.

  \begin{clm}
    \label{clm-poly}
    The clone of term operations of $\frak{C}(\al{A}^\ccc,\chi)$ coincides
    with the clone of term operations of the constant expansion of
    $\frak{C}(\al{A},\chi)$.
  \end{clm}

  \begin{proof}[Proof of Claim~\ref{clm-poly}]
    The set $C:=\prod_{i\in[m]}\chi^{-1}(i)$ is the universe of all
    three algebras that play a role in the claim:
    $\frak{C}(\al{A},\chi)$, $\frak{C}(\al{A}^\ccc,\chi)$, and
    the constant expansion of $\frak{C}(\al{A},\chi)$.
    It also follows from the definition of the functor $\frak{C}$ that
    $\frak{C}(\al{A}^\ccc,\chi)$ is obtained from  
    $\frak{C}(\al{A},\chi)$ by adding
    the unary operations $\hatt{\ccc_a}_i$ ($a\in A$, $i\in[m]$),
    while the constant expansion of $\frak{C}(\al{A},\chi)$
    is obtained from  $\frak{C}(\al{A},\chi)$ by adding
    the unary constant operations $\ccc_{\wec{c}}$ ($\wec{c}\in C$).
    Therefore, to prove the claim it suffices to show that
    \begin{enumerate}
    \item[(1)]
      every operation $\hatt{\ccc_a}_i$ ($a\in A$, $i\in[m]$)
      is a term operation of the constant expansion of
      $\frak{C}(\al{A},\chi)$, and
    \item[(2)]
      every unary constant operation $\ccc_{\wec{c}}$ ($\wec{c}\in C$) is a
      term operation of $\frak{C}(\al{A}^\ccc,\chi)$.
    \end{enumerate}
    (1) follows by observing that if $a\in \chi^{-1}(i')$ 
    and $\wec{c}$ is an element of $C=\prod_{\ell\in[m]}\chi^{-1}(\ell)$
    with $i'$-th coordinate $a$, then for each $i\in[m]$ we have that
    \[
    \hatt{\ccc_a}_i(\wec{x})=
  \diag\bigl(\wec{x},\dots,\wec{x},\underbrace{\ccc_{\wec{c}}(\wec{x})}_{\text{$i'$-th}}
    \wec{x},\dots,\wec{x}\bigr).
    \]
    For (2), notice that if
    $\wec{c}=(c_1,\dots,c_m)\in\prod_{i\in[m]}\chi^{-1}(i)=C$,
    then
    the unary operation $\ccc_{\wec{c}}$ is the composition (in any order)
    of the unary operations $\hatt{\ccc_{c_1}}_1,\dots,\hatt{\ccc_{c_m}}_m$.
  \renewcommand{\qedsymbol}{$\diamond$}
  \end{proof}  

To summarize, we have that the clone of polynomial operations of
$\frak{C}(\al{A},\chi)$ is the clone of term operations of
$\frak{C}(\al{A}^\ccc,\chi)$, while the clone of polynomial operations
of $\al{A}$ is the clone of term operations of $\al{A}^\ccc$.
Therefore, if we apply Corollary~\ref{cor-termops}(1)
to the object $(\al{A}^\ccc,\chi)$ of the category
$\bigl(\Alg(\lngF^\ccc)\,{\downdownarrows}\,\al{I}^\ccc\bigr)$,
we get the conclusion of Corollary~\ref{cor-poly}.
\end{proof}
  
For the basic concepts and notation of tame congruence theory we refer
the reader to \cite{hobby-mckenzie}. If for a covering pair
$\delta\prec\theta$ of congruences in some finite algebra $\al{A}$ we have that
$\typ_{\al{A}}(\delta,\theta)={\bf2}$, and hence the minimal algebras
$\al{A}|_N/\delta|_N$ associated to all
$\langle\delta,\theta\rangle$-traces $N$ are weakly isomorphic 
one-dimensional vector spaces over a finite field,
we will refer to the characteristic of this field as
\emph{the characteristic of $\langle\delta,\theta\rangle$}.
In particular, if $\theta$ is an atom in the congruence lattice of $\al{A}$,
then the characteristic of the prime quotient
$\langle 0,\theta\rangle$ will also be called
\emph{the characteristic of $\theta$}.

\begin{thm}
  \label{thm-tct}
  Let $\mathcal{F}$ be an algebraic language, and let
  $\al{I}=([m];\mathcal{F})$ and $\al{A}$ be finite $\mathcal{F}$-algebras.
  If  $(\al{A},\chi)$ is an object of $\comma$ with $\alpha:=\ker(\chi)$,
  then for any covering pair $\delta\prec\theta$ of congruences of $\al{A}$
  in the interval $I(0,\alpha)$ and for every
  $\langle\delta,\theta\rangle$-trace $N$ of $\al{A}$ there exist a
  $\langle\delta^\frakC,\theta^\frakC\rangle$-trace $\check{N}$ of
  $\al{C}:=\frak{C}(\al{A},\chi)$ and a weak isomorphism
  $\al{A}|_N\to\al{C}|_{\check{N}}$ between the induced algebras which
  maps $\delta|_N$ to $\delta^\frakC|_{\check{N}}$.
  Consequently,
    \[
  \typ_{\al{A}}(\delta,\theta)=\typ_{\al{C}}(\delta^\frakC,\theta^\frakC);
  \]
  moreover,
  if $\typ_{\al{A}}(\delta,\theta)=\typ_{\al{C}}(\delta^\frakC,\theta^\frakC)={\bf2}$,
  then the prime quotients $\langle\delta,\theta\rangle$ and
  $\langle\delta^\frakC,\theta^\frakC\rangle$ have the same prime characteristic.
\end{thm}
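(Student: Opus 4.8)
The plan is to transport the tame congruence theory of the prime quotient $\langle\delta,\theta\rangle$ in $\al{A}$ to $\langle\delta^*,\theta^*\rangle$ in $\al{C}$ by using the explicit description of the polynomial operations of $\al{C}$ in terms of those of $\al{A}$ from Corollary~\ref{cor-poly}, together with the congruence correspondence $^*$ from Corollary~\ref{cor-congr}. First I would recall that, because $\theta\le\alpha$, the coordinatewise description of $\delta^*$ and $\theta^*$ in Corollary~\ref{cor-congr}(1) shows that $\delta^*\prec\theta^*$ in $\Con(\al{C})$ (covering is preserved since $^*$ is a lattice isomorphism of $I(0,\alpha)$ onto $\Con(\al{C})$). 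Next, fix a $\langle\delta,\theta\rangle$-minimal set $U$ of $\al{A}$ and a $\langle\delta,\theta\rangle$-trace $N\subseteq U$; say $N\subseteq\chi^{-1}(i_0)$ for some $i_0\in[m]$ (a trace lies inside a single $\theta$-block, hence inside a single $\alpha$-block since $\theta\le\alpha$). The candidate trace in $\al{C}$ should be
\[
\check{N}:=\{\,\wec{c}\in C : (\wec{c})\us{i_0}\in N,\ (\wec{c})\us{i}=d\us{i}\ \text{for all }i\ne i_0\,\},
\]
where the $d\us{i}\in\chi^{-1}(i)$ are fixed padding elements, so that the obvious bijection $\iota\colon N\to\check{N}$ sending $a$ to the vector with $a$ in coordinate $i_0$ and padding elsewhere is the candidate weak isomorphism $\al{A}|_N\to\al{C}|_{\check N}$.

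The heart of the argument is then to show two things: (a) $\check N$ is (contained in) a $\langle\delta^*,\theta^*\rangle$-trace of $\al{C}$, and (b) $\iota$ is a weak isomorphism carrying $\delta|_N$ to $\delta^*|_{\check N}$. For (b), I would argue that the unary polynomials of $\al{C}$ that map $\check N$ into $\check N$ are, via Corollary~\ref{cor-poly}, exactly composed from unary polynomials of $\al{A}$ acting on the $i_0$-th coordinate (the other coordinates being pinned to padding), so that $\Pol_1(\al{C})|_{\check N}$ and $\Pol_1(\al{A})|_N$ are carried to one another under $\iota$; here one uses that a polynomial $P$ of $\al{C}$ has coordinate polynomials $p\us{1},\dots,p\us{m}$ of $\al{A}$ with $p\us{i}$ valued in $D\us{i}$, and that evaluating at vectors from $\check N$ feeds each $p\us{i}$ the same padding arguments in all but the $i_0$-th slot. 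This should identify the induced algebras $\al{A}|_N$ and $\al{C}|_{\check N}$ up to the relabeling $\iota$, and since $\delta^*$ restricted to $\check N$ corresponds under the coordinatewise description to $\delta$ restricted to $N$, the relation $\delta|_N$ goes to $\delta^*|_{\check N}$. For (a), I would use the Corollary~\ref{cor-A-n-alpha}/Corollary~\ref{cor-congr} dictionary to check that $\theta^*$ collapses $\check N$ onto a single $\theta^*$-block while $\delta^*$ does not collapse it entirely (because $\theta$ does, and $\delta$ does not, on $N$), and that $\check N$ lies in a $\langle\delta^*,\theta^*\rangle$-minimal set — for the latter I would lift a $\langle\delta,\theta\rangle$-minimal set $U$ of $\al{A}$ to an appropriate "padded" subset of $C$ and use the clone description to see it is the range of an idempotent polynomial of $\al{C}$ that is minimal for $\langle\delta^*,\theta^*\rangle$.

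Once the weak isomorphism $\al{A}|_N\to\al{C}|_{\check N}$ mapping $\delta|_N$ to $\delta^*|_{\check N}$ is established, the equality $\typ_{\al{A}}(\delta,\theta)=\typ_{\al{C}}(\delta^*,\theta^*)$ is immediate, since the TCT type of a prime quotient is determined (up to the five possibilities) by the polynomial-clone isomorphism type of $\al{A}|_N$ modulo $\delta|_N$. Finally, in the type $\mathbf{2}$ case the minimal algebra $\al{A}|_N/\delta|_N$ is a one-dimensional vector space over a finite field $\mathbb{F}$, and the weak isomorphism transports this module structure verbatim to $\al{C}|_{\check N}/\delta^*|_{\check N}$; since a weak isomorphism of one-dimensional vector spaces preserves the underlying field (the field is recovered as the ring of unary idempotent-free polynomial operations, which is preserved), the prime characteristics agree. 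The main obstacle I anticipate is step (a): verifying that the padded set $\check N$ actually sits inside a genuine $\langle\delta^*,\theta^*\rangle$-minimal set of $\al{C}$ — equivalently, that no unary polynomial of $\al{C}$ shrinks $\check N$ further without collapsing $\theta^*|_{\check N}$ — which requires careful use of the coordinate-polynomial description from Corollary~\ref{cor-poly} together with the condition $p\us{i}(\al{a})\in D\us{i}$, rather than any soft categorical reasoning.
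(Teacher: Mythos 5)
Your plan mirrors the paper's proof almost step for step: the same padded candidate trace $\check N$, the same bijection $u\mapsto\check u$ as the proposed weak isomorphism, the same use of Corollary~\ref{cor-poly} to match polynomial restrictions coordinatewise, and the same recognition that the crux is showing $\check N$ lies inside a genuine $\langle\delta^*,\theta^*\rangle$-minimal set. You correctly flag that this last point cannot be handled by ``soft'' reasoning --- and that is exactly where your proposal stops short of a proof.

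The gap: you assert that one can ``use the clone description to see'' that the padded copy $\check U$ of $U$ is $\langle\delta^*,\theta^*\rangle$-minimal, but no argument is given, and this is the substantive content of the theorem. What has to be ruled out is the possibility that some unary polynomial $g$ of $\al{C}$ maps $\check U$ onto a proper subset $\check V\subsetneq\check U$ while $\delta^*|_{\check V}\subsetneq\theta^*|_{\check V}$. The paper handles this by contradiction: one may assume $g=g^2$ and, after replacing $g$ by $g\circ\check e$ (which still fixes $\check V$ pointwise), extract from the first-coordinate polynomial $g\us1$ of Corollary~\ref{cor-poly} a unary polynomial $f(x):=g\us1\bigl(e(x),c\us2,\dots,c\us m\bigr)$ of $\al{A}$. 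One then checks $f=f\circ e$, that $f$ fixes $V$ and carries $U\cap D\us1$ onto $V$, and uses the containments $\ker(e)\subsetneq\ker(e\circ f)$ to conclude $(e\circ f)(A)\subsetneq U$ while $(e\circ f)(\theta)\not\subseteq\delta$, contradicting minimality of $U$ in $\al{A}$. This descent from a hypothetical ``shrinking'' polynomial of $\al{C}$ to a shrinking polynomial of $\al{A}$ is the key step your proposal names but does not carry out; without it, you have shown only that $\check U$ is the range of an idempotent polynomial, not that it is minimal. The remainder of your plan (matching $\Pol_1$ restrictions via the coordinate polynomials pinned at the padding, transferring $\delta|_N$ to $\delta^*|_{\check N}$ via Corollary~\ref{cor-congr}, and reading off type and characteristic from the weak isomorphism of minimal algebras) is sound and agrees with the paper.
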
  

\begin{proof}
We will use the notation $D\us{i}:=\chi^{-1}(i)$ ($i\in[m]$)
for the equivalence classes of $\alpha$.
So, $C:=\prod_{i\in[m]}D\us{i}$ is
the underlying set of the algebra $\al{C}=\frak{C}(\al{A},\chi)$.

Let $\delta\prec\theta\,(\le\alpha)$ be congruences in $\al{A}$, and
let $N$ be a $\langle\delta,\theta\rangle$-trace of $\al{A}$.
Then there exist a unary polynomial operation $e=e^2$
of $\al{A}$ such that $U:=e(A)$ is a
$\langle\delta,\theta\rangle$-minimal set of $\al{A}$, and
$N=U\cap (a/\theta)$ for some $a\in U$.
Furthermore,
$\theta|_N$ is the full relation on $N$ and $\delta|_N\subsetneq\theta|_N$.

Since $\theta\leq\alpha$, $N$ lies in a single $\alpha$-class;
without loss of generality we may assume that $N\subseteq D\us{1}$.
Now we choose and fix elements
$c\us{2}\in D\us{2},\dots,c\us{m}\in D\us{m}$ from the remaining
$\alpha$-classes. For $2\le i\le m$ let $q\us{i}$ denote the unary constant
polynomial operation of $\al{A}$ with value $c\us{i}$, while for $i=1$
let $q\us{1}:=e$. Notice that $e(D\us{1})\subseteq D\us{1}$, because
$D\us{1}=a/\alpha$ and $e(a)=a$ (as $a\in N\subseteq U=e(A)$).
Therefore, it follows from 
Corollary~\ref{cor-poly} that the function
$\check{e}\colon C\to C$ defined by
\[
\check{e}(\wec{a}):=\begin{bmatrix} q\us{1}(a\us{1})\\
                     q\us{2}(a\us{2})\\
                     \vdots\\
                     q\us{m}(a\us{m})
             \end{bmatrix}        
           =\begin{bmatrix} e(a\us{1})\\
                     c\us{2}\\
                     \vdots\\
                     c\us{m}
           \end{bmatrix}
           \quad
           \text{for all}\quad
           \wec{a}=\begin{bmatrix} a\us{1}\\
                     a\us{2}\\
                     \vdots\\
                     a\us{m}
             \end{bmatrix}\in C        
\]
is a polynomial operation of $\al{C}$.
Clearly, $\check{e}=\check{e}^2$, and for the set $\check{U}:=\check{e}(C)$
we have that
\[
\check{U}=e(D\us{1})\times\{c\us{2}\}\times\dots\times\{c\us{m}\}
=(U\cap D\us{1})\times\{c\us{2}\}\times\dots\times\{c\us{m}\}.
\]
For each $u\in U\cap D\us{1}$ let
$\check{u}:=(u,c\us{2},\dots,c\us{m})$; thus, we have
a bijection $U\cap D\us{1}\to\check{U}$, $u\mapsto\check{u}$.
Restricting this mapping to the set $N\,(\subseteq U\cap D\us{1})$ we obtain a
bijection
\[
N\to\check{N}:=N\times\{c\us{2}\}\times\dots\times\{c\us{m}\},
\quad u\mapsto\check{u}.
\]
It follows from Corollary~\ref{cor-congr}(1) that 
$\delta^\frakC|_{\check{N}}\subsetneq\theta^\frakC|_{\check{N}}$ and
$\check{N}=\check{U}\cap (\check{a}/\theta^\frakC)$ (so $\theta^\frakC|_{\check{N}}$
is the full relation on $\check{N}$).
To establish that $\check{N}$ is a $\langle\delta^\frakC,\theta^\frakC\rangle$-trace
of $\al{C}$ it remains to show that $\check{U}$ is a
$\langle\delta^\frakC,\theta^\frakC\rangle$-minimal set of $\al{C}$.

Assume not, and let $\check{V}:=V\times\{c\us{2}\}\times\dots\times\{c\us{m}\}$
be a proper subset of $\check{U}$ which is
$\langle\delta^\frakC,\theta^\frakC\rangle$-minimal in $\al{C}$.
Then there exists a unary polynomial $g$ of $\al{C}$ such that
\begin{equation}
  \label{eq-props-g}
  \check{V}=g(C),\quad g=g^2,\quad \text{and}\quad
  \delta^\frakC|_{\check{V}}\subsetneq\theta^\frakC|_{\check{V}}.
\end{equation}
  Hence, by Corollary~\ref{cor-congr}(1),
$\delta|_{V}\subsetneq\theta|_{V}$ where $V\subsetneq U\cap D\us{1}$.
Since both $g$ and $\check{e}$ fix every element of
$V\,(\subseteq\check{U})$, so will the unary polynomial $g\circ\check{e}$.
Therefore, by replacing $g$ with $g\circ\check{e}$,
we may assume from now on that, in addition to \eqref{eq-props-g},
$g$ also satisfies $g=g\circ\check{e}$.
Let $g\us{1}$ denote the $m$-ary polynomial operation of $\al{A}$
that is the first coordinate function of $g$ according to
Corollary~\ref{cor-poly}, and define a unary polynomial operation
$f$ of $\al{A}$ by 
$f(x):=g\us{1}\bigl(e(x),c\us{2},\dots,c\us{m}\bigr)$.
Then,
\begin{multline*}
g(\wec{a}) 
=(g\circ\check{e})(\wec{a})
=g\left(
\begin{bmatrix} e(a\us{1})\\
                     c\us{2}\\
                     \vdots\\
                     c\us{m}
\end{bmatrix}
\right)
=
\begin{bmatrix} g\us{1}\bigl(e(a\us{1}),c\us{2},\dots,c\us{m}\bigr)\\
                     c\us{2}\\
                     \vdots\\
                     c\us{m}
\end{bmatrix}
%\quad
=
\begin{bmatrix} f(a\us{1})\\
                     c\us{2}\\
                     \vdots\\
                     c\us{m}
\end{bmatrix}\\
\text{for all}\quad
           \wec{a}=\begin{bmatrix} a\us{1}\\
                     a\us{2}\\
                     \vdots\\
                     a\us{m}
             \end{bmatrix}\in C.        
\end{multline*}
The first property of $g$ in \eqref{eq-props-g} implies that
$f(U\cap D\us{1})=V$, while the second one implies that
$f|_{U\cap D\us{1}}=(f|_{U\cap D\us{1}})^2$. Hence, 
$f$ fixes all elements of $V$.
Since by the definition of $f$ we have $f=f\circ e$, it follows that
$\ker(e)\subseteq\ker(f)\subseteq\ker(e\circ f)$.
The elements of $U\cap D\us{1}$ are in distinct kernel classes
of $e$, because $e$ fixes all elements of $U$. However, some
elements of $U\cap D\us{1}$ are in the same kernel class of $e\circ f$,
because $(e\circ f)(U\cap D\us{1})=e(V)=V\subsetneq U\cap D\us{1}$.
Hence, 
$\ker(e)\subsetneq\ker(e\circ f)$, and therefore
$U=e(A)\supsetneq (e\circ f)(A)$.
Combining the facts that $\delta|_V\subsetneq \theta|_V$
and $e\circ f$ fixes every element of $V$ we also get that
$\theta|_V\subseteq(e\circ f)(\theta)$, and hence
$(e\circ f)(\theta)\not\subseteq\delta$.
The existence of a unary polynomial $e\circ f$ of $\al{A}$ with these
properties contradicts our initial assumption that $U$ is a
$\langle\delta,\theta\rangle$-minimal set of $\al{A}$.
This contradiction shows that $\check{U}$ is a
$\langle\delta^\frakC,\theta^\frakC\rangle$-minimal set of $\al{C}$, and
$\check{N}$ is
$\langle\delta^\frakC,\theta^\frakC\rangle$-trace of $\al{C}$.

Now we will prove that the induced algebras $\al{A}|_N$ and
$\al{C}|_{\check{N}}$ are weakly isomorphic.
Let $P$ be a $k$-ary polynomial operation of $\al{C}$
such that $P(\check{N}^k)\subseteq \check{N}$.
If we write $P$ in the form
described in Corollary~\ref{cor-poly}, where the coordinate polynomials
of $P$ are the $mk$-ary polynomial operations $p\us{1},\dots,p\us{m}$
of $\al{A}$, then
we get that for $2\le i\le m$, $p\us{i}$ must be constant with
value $c\us{i}$ for all allowable inputs. Thus, we may assume without loss of
generality that $p\us{2},\dots,p\us{m}$ are these constant polynomials.
For $i=1$, the assumption $P(\check{N}^k)\subseteq \check{N}$ forces that
the first coordinate function of $P$ assigns to any tuple
$(\check{u}\ls{1},\dots,\check{u}\ls{k})\in\check{N}^k$
the element
$p\us{1}(\check{u}\ls{1},\dots,\check{u}\ls{k})\in\check{N}$.
By the fact that the second through $m$-th coordinates of
$\check{u}\ls{1},\dots,\check{u}\ls{k}$ are $c\us{2},\dots,c\us{m}$,
we have the equality
$p\us{1}(\check{u}\ls{1},\dots,\check{u}\ls{k})=p(u\ls{1},\dots,u\ls{k})$
for all $\check{u}\ls{1},\dots,\check{u}\ls{k}\in\check{N}$
if we denote by $p$ the
$k$-ary polynomial of $\al{A}$
obtained from $p\us{1}$ by
replacing appropriate variables by $c\us{2},\dots,c\us{m}$.
This shows that if $P$ is a $k$-ary polynomial operation of $\al{C}$
such that $P(\check{N}^k)\subseteq \check{N}$, then $\al{A}$ has 
a $k$-ary polynomial operation $p$ such that
$p(N^k)\subseteq N$ and
\begin{equation*}
P|_{\check{N}}(\check{u}\ls{1},\dots,\check{u}\ls{k})=
P|_{\check{N}}\left(
\begin{bmatrix} u\ls{1}\\ c\us{2}\\ \vdots\\ c\us{m}\end{bmatrix},\dots,
\begin{bmatrix} u\ls{k}\\ c\us{2}\\ \vdots\\ c\us{m}\end{bmatrix}
 \right)
 =\begin{bmatrix} p|_N(u\ls{1},\dots,u\ls{k})\\
                    c\us{2}\\ \vdots\\ c\us{m}.
 \end{bmatrix}
\end{equation*}
or equivalently,
%\marginpar{eq-weakiso}
\begin{equation}
  \label{eq-weakiso}
  P|_{\check{N}}(\check{u}\ls{1},\dots,\check{u}\ls{k})
  =\bigl(p|_N(u\ls{1},\dots,u\ls{k})\bigr)\check{\phantom{l}}
  \qquad
  \text{for all $u\ls{1},\dots u\ls{k}\in N$}.
\end{equation}  
Conversely.
if $p$ is a $k$-ary polynomial operation of $\al{A}$ satisfying
$p(N^k)\subseteq N$, then the polynomial operation $P$ of $\al{C}$
whose coordinate functions are $p$ and the unary constant polynomials
with values $c\us{2},\dots,c\us{m}$ satisfies these equalities.
Clearly, each one of the induced operations $p|_N$ and $P|_{\check{N}}$
uniquely determines the another.
This proves that if we make a correspondence between the operations
of the non-indexed algebras $\al{A}|_N$ and $\al{C}|_{\check{N}}$
via the assignment $p|_N\mapsto P|_{\check{N}}$ indicated in 
\eqref{eq-weakiso}, then the bijection
$N\to\check{N}$, $u\mapsto\check{u}$ is an isomorphism.
Hence, this bijection is a weak isomorphism $\al{A}|_N\to\al{C}|_{\check{N}}$.
Moreover, by Corollary~\ref{cor-congr}(1), it maps
$\delta|_N$ to $\delta^\frakC|_{\check{N}}$, which
completes the proof of the first statement of Theorem~\ref{thm-tct}.

The second statement now follows easily from the facts that
$\typ(\delta,\theta)$ is the type of the minimal algebra
$\al{A}|_N/\delta|_N$,
$\typ(\delta^\frakC,\theta^\frakC)$ is the type of the minimal algebra
$\al{C}|_{\check{N}}/\delta^\frakC|_{\check{N}}$, we have a weak isomorphism
$\al{A}|_N/\delta|_N\to\al{C}|_{\check{N}}/\delta^\frakC|_{\check{N}}$
induced by the weak isomorphism $\al{A}|_N\to\al{C}|_{\check{N}}$
mapping $\delta|_N$ to $\delta^\frakC|_{\check{N}}$ obtained above, and
weakly isomorphic minimal algebras have the same type; moreover, if that type
is ${\bf2}$, then they also have the same prime characteristic.
\end{proof}

\section{Supernilpotent congruences}
\label{sec-appl1}

A congruence $\alpha$ of an algebra $\al{A}$ is
called \emph{$k$-supernilpotent} if
\[
[\,\underbrace{\alpha,\dots,\alpha}_{\text{$k+1$ $\alpha$'s}}\,]=0,
\]
and $\alpha$ is called \emph{supernilpotent} if it is $k$-supernilpotent
for some $k\ge1$. An algebra $\al{A}$ is called $k$-supernilpotent
or supernilpotent if its congruence $1$ has the property.

It is well known (see e.g.\ \cite[p.\ 370]{AM:SAHC})
and easy to check that for every $k\ge1$ the 4-element
algebra $(\mathbb{Z}_4;+,2x_1\dots x_k)$ ($k\ge2$) is $k$-supernilpotent,
but not $(k-1)$-supernilpotent. Therefore, even for finite algebras of
a fixed size, there is no a priori bound on the 
arity of the higher commutator $[1,\dots,1]$ to be checked
if one wants to determine whether the algebra is
supernilpotent. Hence, it is not clear from the definition whether
supernilpotence is a decidable property for
congruences of finite algebras.

Under mild assumptions on a finite algebra in a finite language,
a combination of basic facts from tame congruence theory
(see \cite{hobby-mckenzie}) and results from \cite{Ke:CMVS} and \cite{AM:SAHC}
yields the following characterization of supernilpotence, which implies that
supernilpotence for these algebras is decidable.

\begin{thm}
  \label{thm-sn-alg}
  \cite{hobby-mckenzie, Ke:CMVS, AM:SAHC}
Let $\al{A}$ be a finite algebra in a finite language such that
the variety $\var{V}(\al{A})$ generated by $\al{A}$ omits type {\bf 1}.
Then $\al{A}$ is supernilpotent if and only if $\al{A}$ factors as a direct
product of nilpotent algebras of prime power order.
\end{thm}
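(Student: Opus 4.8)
The plan is to prove the two implications separately, in both cases reducing to the congruence permutable setting, where the facts about higher commutators that we need are available from \cite{Ke:CMVS, AM:SAHC}. Two preliminary observations will be used on both sides. First, a finite nilpotent algebra $\al{B}$ omits the TCT types $\mathbf{3}$, $\mathbf{4}$, $\mathbf{5}$, since every prime quotient of a nilpotent algebra is abelian; hence if moreover $\var{V}(\al{B})$ omits type $\mathbf{1}$, then $\typ\{\al{B}\}\subseteq\{\mathbf{2}\}$, and by the structure theory of \cite{hobby-mckenzie} together with \cite{Ke:CMVS} such a $\al{B}$ generates a congruence modular --- indeed, congruence permutable --- variety. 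Second, supernilpotence of the top congruence implies nilpotence: for the terms $\gamma_i$ of the lower central series one has $\gamma_{i+1}(\al{B})\le[\,\underbrace{1,\dots,1}_{i+1}\,]$ by monotonicity of the higher commutator and the identity $[[\alpha_1,\dots,\alpha_n],\beta]\le[\alpha_1,\dots,\alpha_n,\beta]$ (see \cite{AM:SAHC}), so $[\,\underbrace{1,\dots,1}_{k+1}\,]=0$ forces $\gamma_{k+1}(\al{B})=0$.

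For the direction ``$\al{A}\cong\al{A}_1\times\dots\times\al{A}_r$ with each $\al{A}_i$ nilpotent of prime power order $\Rightarrow$ $\al{A}$ supernilpotent'', note first that each $\al{A}_i$ lies in $\var{V}(\al{A})$, so $\var{V}(\al{A}_i)$ also omits type $\mathbf{1}$, and by the first observation $\al{A}_i$ generates a congruence permutable variety. One then invokes the central fact extracted from \cite{Ke:CMVS, AM:SAHC} that a finite nilpotent algebra of prime power order generating a congruence modular variety is supernilpotent --- it is the prime-power hypothesis that pins down an a priori bound on the arity of the higher commutator that has to be checked. Finally, a finite direct product of supernilpotent algebras is supernilpotent \cite{AM:SAHC}, since for products of congruences the higher commutator is computed coordinatewise; hence $\al{A}$ is supernilpotent.

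For the converse, suppose $\al{A}$ is supernilpotent. By the second observation $\al{A}$ is nilpotent, hence by the first observation $\al{A}$ generates a congruence modular variety. Now apply the decomposition theorem for finite nilpotent algebras in congruence modular varieties (the algebra-theoretic analogue of the primary decomposition of finite nilpotent groups; see \cite{Ke:CMVS, AM:SAHC}, built on the commutator theory of \cite{freese-mckenzie}): such an algebra is isomorphic to a direct product $\al{A}_1\times\dots\times\al{A}_r$ in which each $|\al{A}_i|$ is a prime power and each $\al{A}_i$ is nilpotent. This is exactly the asserted form, completing the proof.

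The routine parts are the reduction to a single prime-power factor via preservation of supernilpotence under finite products, the TCT type bookkeeping of the first observation, and the appeal to the decomposition of finite nilpotent algebras in congruence modular varieties. The main obstacle --- and the genuinely hard input drawn from \cite{Ke:CMVS, AM:SAHC} --- is the implication ``nilpotent $+$ prime power order $+$ omits type $\mathbf{1}$ $\Rightarrow$ supernilpotent'': nilpotence by itself only bounds the lower central series and gives no control on the arity of $[1,\dots,1]$ needed to witness supernilpotence, and, as the family $(\mathbb{Z}_4;+,2x_1\cdots x_k)$ mentioned above shows, no such bound exists uniformly over algebras of a fixed size; the extra hypothesis that the order is a single prime power is precisely what lets one read a bound off the associated ring/module data of the congruence modular structure.
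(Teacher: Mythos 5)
Your outline assembles the same three ingredients the paper cites --- the Hobby--McKenzie tame congruence structure theory to reach congruence permutability, Kearnes's criterion for a prime-power direct decomposition, and Aichinger--Mudrinski's characterization of supernilpotence via bounded commutator arity --- and the forward direction (direct factorization $\Rightarrow$ supernilpotent) is assembled correctly. The converse direction, however, contains a circularity that needs to be repaired.

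Your ``second observation'' (supernilpotent $\Rightarrow$ nilpotent) is derived from the nesting inequality $[[\alpha_1,\dots,\alpha_n],\beta]\le[\alpha_1,\dots,\alpha_n,\beta]$, which you cite to \cite{AM:SAHC}. That inequality is established there for Mal'cev algebras, so it is not available until a Mal'cev term is. But in the converse you invoke the second observation \emph{before} the first, and the first observation (which is what supplies congruence permutability) takes nilpotence as its hypothesis. Thus you need permutability to get nilpotence, and nilpotence to get permutability. The paper's statement (I) is phrased precisely to break this loop: from \cite[Thms.~7.2, 7.11]{hobby-mckenzie}, ``$\var{V}(\al{A})$ omits type $\mathbf{1}$ and $\al{A}$ supernilpotent $\Rightarrow$ $\al{A}$ solvable $\Rightarrow$ $\var{V}(\al{A})$ congruence permutable'' follows by a direct tame-congruence argument (supernilpotence already excludes types $\mathbf{3},\mathbf{4},\mathbf{5}$ via the higher term condition, with no nested-commutator identity needed). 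So your first observation should be stated with ``supernilpotent'' in place of ``nilpotent'', and the second observation should be postponed until after permutability is secured; at that point \cite[Cor.~6.15]{AM:SAHC} gives nilpotence, as the paper's statement (III) records. With that reordering the rest of your bookkeeping --- Kearnes's Theorem 3.14 in one direction, Aichinger--Mudrinski's Lemma 7.5 in the other, and preservation of supernilpotence under finite direct products --- is correct and matches the paper's intended assembly.
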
  

In more detail, the three theorems on finite algebras $\al{A}$, 
which combine to yield
the characterization in Theorem~\ref{thm-sn-alg},
are as follows:
\begin{enumerate}
\item[(I)]
  If $\var{V}(\al{A})$ omits type {\bf 1}
  and $\al{A}$ is supernilpotent or nilpotent,
  then $\al{A}$ is solvable, and hence
  $\var{V}(\al{A})$ is congruence
  permutable (i.e., $\al{A}$ has a Maltsev term).
  \cite[Thm.~7.2, Cor.~7.6, Thm.~7.11]{hobby-mckenzie}
\item[(II)]
  If $\al{A}$ is nilpotent in a finite language and
  $\var{V}(\al{A})$ is congruence modular, then
  \begin{itemize}
  \item
    $\al{A}$ factors as a direct product of nilpotent algebras of
    prime power order if and only if
    $\al{A}$ has a finite bound on the arities of nontrivial commutator terms. 
    \cite[Thm.~3.14]{Ke:CMVS}
  \end{itemize}
\item[(III)]
  Assuming $\var{V}(\al{A})$ is congruence permutable,
  \begin{itemize}
  \item
    if $\al{A}$ is supernilpotent, then $\al{A}$ is nilpotent; moreover,
  \item
    $\al{A}$ is supernilpotent if and only if $\al{A}$ has a finite bound on the
    arities of nontrivial commutator terms.
    \cite[Cor.~6.15, Lm.~7.5]{AM:SAHC}
  \end{itemize}
\end{enumerate}  

Our goal in this section is to use
the techniques developed in Sections~2--3 to lift
the characterization of supernilpotence in Theorem~\ref{thm-sn-alg}
from algebras to congruences
as follows.

\begin{thm}
  \label{thm-sn}
  Let $\al{A}$ be a finite algebra in a finite language such that
  $\var{V}(\al{A})$ omits type {\bf 1}.
  For any congruence $\alpha$ of $\al{A}$ the following conditions
  are equivalent:
\begin{enumerate}
\item[{\rm(a)}]
  $\alpha$ is supernilpotent.
\item[{\rm(b)}]
  either $\alpha=0$, or else $\alpha$ is nilpotent, and $\al{A}$ has
  congruences $\beta_1,\dots,\beta_\ell\le\alpha$ (for some $\ell>0$) such that 
\begin{enumerate}
\item[{\rm(1)}]
  $\beta_1\wedge\dots\wedge\beta_\ell = 0$,
\item[{\rm(2)}]
 $(\beta_1\wedge\dots\wedge\beta_{i-1})\circ\beta_i = \alpha$
  for every $i\in [\ell]$, $i>1$, and  
\item[{\rm(3)}]
  for each $i\in[\ell]$
  there exists a prime $p_i$ such that every block of $\alpha/\beta_i$ in
  $\al{A}/\beta_i$ has size a power of $p_i$.
\end{enumerate}
\end{enumerate}
\end{thm}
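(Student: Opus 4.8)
The plan is to transfer the question about supernilpotence of the congruence $\alpha$ of $\al{A}$ to a question about supernilpotence of the algebra $\al{C}:=\frak{C}(\al{A},\chi)$, where $\chi\colon\al{A}\to\al{A}/\alpha=:\al{I}$ is the canonical surjection, and then apply Theorem~\ref{thm-sn-alg} to $\al{C}$. First I would record the translation dictionary: by Corollary~\ref{cor-congr}(1) the map ${}^*$ is a lattice isomorphism $I(0,\alpha)\to\Con(\al{C})$ sending $\alpha$ to $1_{\al{C}}$ and $0$ to $0_{\al{C}}$; by Theorem~\ref{thm-comm} it preserves higher commutators, so $\alpha$ is supernilpotent (of degree $k$) iff $\al{C}$ is supernilpotent (of degree $k$); by Corollary~\ref{cor-congr}(2) it preserves meets, joins and $k$-permutability of congruences; by Theorem~\ref{thm-tct} it preserves TCT types of covering pairs, in particular $\var{V}(\al{C})$ omits type $\mathbf{1}$ whenever $\var{V}(\al{A})$ does (alternatively, and more robustly, this follows from Corollary~\ref{cor-clone-idem}(2), since omitting type $\mathbf{1}$ in a locally finite variety is equivalent to an idempotent Maltsev condition, and $\al{A}$ and $\al{C}$ are finite); and by Corollary~\ref{cor-clone-fingen} the algebra $\al{C}$ is (term equivalent to one) in a finite language. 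The one subtlety is the case $\alpha=0$: then $\al{C}$ is a one-element algebra, which is trivially supernilpotent, matching the disjunct $\alpha=0$ in (b); so from now on assume $\alpha>0$, equivalently $|C|>1$.

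Next I would apply Theorem~\ref{thm-sn-alg} to $\al{C}$: $\alpha$ is supernilpotent $\iff$ $\al{C}$ is supernilpotent $\iff$ $\al{C}$ factors as a direct product $\al{C}\cong\al{C}_1\times\dots\times\al{C}_\ell$ of nilpotent algebras $\al{C}_i$ of prime power order, say $|\al{C}_i|=p_i^{n_i}$. A direct product decomposition of $\al{C}$ corresponds to a family of congruences $\gamma_1,\dots,\gamma_\ell\in\Con(\al{C})$ that are pairwise permutable, meet to $0_{\al{C}}$, and are pairwise ``complementary in the factor-wise sense'' — precisely, writing $\hat\gamma_i:=\bigwedge_{j\ne i}\gamma_j$, one has $\gamma_i\wedge\hat\gamma_i=0$ and $\gamma_i\vee\hat\gamma_i=1$ with $\gamma_i\circ\hat\gamma_i=\hat\gamma_i\circ\gamma_i=1$. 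Since ${}^*$ is a lattice isomorphism onto $\Con(\al{C})$, we can pull these back: let $\beta_i\in I(0,\alpha)$ be the unique congruence with $\beta_i^*=\gamma_i$. Then condition (b)(1) $\beta_1\wedge\dots\wedge\beta_\ell=0$ is immediate from $\bigwedge\gamma_i=0_{\al{C}}$ and injectivity of ${}^*$. For (b)(2), I would use the standard fact that a finite nilpotent algebra in a congruence-permutable variety whose congruence lattice supports such a decomposition can be arranged so that the partial products $\beta_1\wedge\dots\wedge\beta_{i-1}$ are still complementary to $\beta_i$; concretely, reindexing so that $\al{C}\cong\al{C}_1\times\dots\times\al{C}_\ell$, the congruence $\gamma_i$ is the kernel of the projection away from the $i$-th coordinate, $\beta_1^*\wedge\dots\wedge\beta_{i-1}^*$ is the kernel of the projection onto the first $i-1$ coordinates, and these two are complementary factor congruences of the product, hence permute and compose to $1_{\al{C}}$. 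Transporting back along ${}^*$ and using that ${}^*$ preserves $k$-permutability (Corollary~\ref{cor-congr}(2)) and sends $\alpha$ to $1_{\al{C}}$, we get $(\beta_1\wedge\dots\wedge\beta_{i-1})\circ\beta_i=\beta_i\circ(\beta_1\wedge\dots\wedge\beta_{i-1})=\alpha$.

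For (b)(3), I would identify the blocks of $\alpha/\beta_i$ in $\al{A}/\beta_i$ with something computable inside $\al{C}$. Using Corollary~\ref{cor-quotient}, $\frak{C}(\al{A}/\beta_i,\chi/\beta_i)\cong\al{C}/\beta_i^*=\al{C}/\gamma_i\cong\al{C}_i$; and by the description of $\frak{C}$ in Corollary~\ref{cor-functorC}, the universe of $\frak{C}(\al{A}/\beta_i,\chi/\beta_i)$ is the product $\prod_{j\in[m]}(\chi/\beta_i)^{-1}(j)$ of the blocks of $\alpha/\beta_i$ in $\al{A}/\beta_i$. Hence $|\al{C}_i|=\prod_{j\in[m]}\bigl|(\chi/\beta_i)^{-1}(j)\bigr|$. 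Since $|\al{C}_i|=p_i^{n_i}$ is a prime power, every factor in this product — i.e., the size of every block of $\alpha/\beta_i$ — is a power of $p_i$. This is exactly (b)(3). I should also verify nilpotency of $\alpha$: by Theorem~\ref{thm-comm} (applied with all arguments equal, via the binary commutator, which is the $2$-ary higher commutator) the lower central series of $\alpha$ in $\al{A}$ maps under ${}^*$ to the lower central series of $1_{\al{C}}$ in $\al{C}$, so $\alpha$ is nilpotent iff $\al{C}$ is, which holds by the factorization. Conversely, for the implication (b)$\Rightarrow$(a) when $\alpha>0$: given $\beta_1,\dots,\beta_\ell$ as in (b), apply ${}^*$ to obtain $\gamma_i:=\beta_i^*$; conditions (1)–(2) translate (using preservation of meets and of $k$-permutability, and $\alpha^*=1_{\al{C}}$) into the statement that $\al{C}$ is the product of the $\al{C}/\gamma_i$ along pairwise-permuting complementary factor congruences, hence $\al{C}\cong\prod_i\al{C}/\gamma_i$; each $\al{C}/\gamma_i\cong\frak{C}(\al{A}/\beta_i,\chi/\beta_i)$ is nilpotent because $\alpha/\beta_i$ is (this uses that $\alpha$ nilpotent in $\al{A}$ forces $\alpha/\beta_i$ nilpotent in $\al{A}/\beta_i$, then Theorem~\ref{thm-comm} again), and has prime power order $p_i^{n_i}$ by the block-size computation above. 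So $\al{C}$ factors as a product of nilpotent algebras of prime power order, hence is supernilpotent by Theorem~\ref{thm-sn-alg}, hence $\alpha$ is supernilpotent by Theorem~\ref{thm-comm}.

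\textbf{Main obstacle.} The routine content is all in the translation, so the one place needing genuine care is arranging the congruences $\beta_i$ in (b) so that the \emph{partial} meets $\beta_1\wedge\dots\wedge\beta_{i-1}$ are complementary to $\beta_i$, i.e. getting condition (b)(2) in the stated ordered form rather than just pairwise complementarity. I expect this to come down to the elementary observation that in a finite direct product $\al{C}_1\times\dots\times\al{C}_\ell$ the kernel of the projection onto the first $i-1$ coordinates and the kernel of the projection onto the $i$-th coordinate are complementary factor congruences that permute — a standard fact about direct decompositions — and then transporting it along the lattice isomorphism ${}^*$ which preserves meets, joins and $k$-permutability. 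The second point requiring a line of justification is that $\var{V}(\al{C})$ omits type $\mathbf{1}$: I would get this from Corollary~\ref{cor-clone-idem}(2) together with the tame-congruence-theoretic fact that, for locally finite varieties, omitting type $\mathbf{1}$ is characterized by a (weak, idempotent) Maltsev condition, so that it is inherited by $\var{V}(\al{A}^*)\supseteq\var{V}(\al{C})$ — alternatively one can cite Theorem~\ref{thm-tct} directly for the finite algebra $\al{C}$, since every prime quotient of every finite section of $\al{C}$ arises, up to the correspondence, from one in a finite section of $\al{A}$. Everything else — nilpotency transfer, the block-size count, and the equivalence of supernilpotence of $\alpha$ with that of $\al{C}$ — is a direct citation of Theorem~\ref{thm-comm}, Corollary~\ref{cor-congr}, Corollary~\ref{cor-quotient}, and Corollary~\ref{cor-functorC}.
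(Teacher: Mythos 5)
Your proof takes the same route as the paper's: pass to $\al{C}=\frak{C}(\al{A},\chi)$, verify the hypotheses of Theorem~\ref{thm-sn-alg} (via Corollary~\ref{cor-clone-idem}(2) for the omitting-type-$\mathbf{1}$ Maltsev condition), apply that theorem, and transport the resulting direct factorization back along ${}^*$ using Corollaries~\ref{cor-congr} and \ref{cor-quotient} and Theorem~\ref{thm-comm}. One small correction: $\gamma_i$ should be the kernel of the projection \emph{onto} the $i$-th coordinate (so that $\al{C}/\gamma_i\cong\al{C}_i$ has prime power order and $\gamma_1\wedge\dots\wedge\gamma_{i-1}$ is complementary to $\gamma_i$), not ``away from'' it, and the side claim that Theorem~\ref{thm-tct} alone yields that $\var{V}(\al{C})$ omits type $\mathbf{1}$ is not immediate since that theorem only speaks about $\al{C}$ itself --- your fallback via the idempotent Maltsev condition is the reliable argument, and the one the paper uses.
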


\begin{proof}
  Suppose $\al{A}$ satisfies the assumptions of the theorem,
  let $\lngF$ denote the (finite) language of $\al{A}$, and let
  $\var{V}:=\var{V}(\al{A})$.
  Let us consider any congruence $\alpha$ of $\al{A}$.
  The statement of the theorem is trivial if
  $\alpha=0$, therefore we will assume from now on that $\alpha>0$.
  Let us choose and fix an algebra $\al{I}=([m];\lngF)$ isomorphic to
  the (finite) algebra $\al{A}/\alpha$, and let us fix an onto homomorphism
  $\chi\colon \al{A}\to\al{I}$.
  Thus, $(\al{A},\chi)$ is an object of the category $\Vcomma$.

  Now let $\al{C}:=\frak{C}(\al{A},\chi)$.
  Clearly, $\al{C}$ is a finite algebra in a finite language,
  $\hat{\lngF}_{\al{I}}$, which belongs to the variety $\var{V}^\frakC$
  consisting of all isomorphic copies of algebras in the category
  $\frak{C}\Vcomma$ (cf.\ Corollary~\ref{cor-var}).
  Clearly, $\var{V}(\al{C})$ is a subvariety of $\var{V}^\frakC$.
  Since $\var{V}$ omits type {\bf 1}, and omitting type {\bf 1}
  is characterized, for locally finite varieties, by an idempotent
  Maltsev condition (see \cite[Thm.~9.6]{hobby-mckenzie}),
  Corollary~\ref{cor-clone-idem}(2) implies that
  $\var{V}(\al{C})$ also satisfies this Maltsev condition, and therefore
  $\var{V}(\al{C})$ omits type {\bf 1}.
  In summary, our discussion in this paragraph shows that $\al{C}$ satisfies
  the assumptions of Theorem~\ref{thm-sn-alg}.

  Let us return to the congruence $\alpha$ of $\al{A}$.
  By Corollary~\ref{cor-congr}(1), its image
  under $\frak{C}$ is the congruence $\alpha^\frakC=1$ of $\al{C}$,
  and by Theorem~\ref{thm-comm}, $\alpha$ is a supernilpotent congruence of
  $\al{A}$ if and only if $1$ is a supernilpotent congruence of $\al{C}$,
  that is, if and only if the algebra $\al{C}$ is supernilpotent.
  By Theorem~\ref{thm-sn-alg}, the latter condition holds if and only if
  $\al{C}$ factors as a direct product of nilpotent algebras of
  prime power order; that is, $\al{C}$ is nilpotent and $\al{C}$
  factors as a direct product of algebras of prime power order.
%  We claim that,
Using congruences this condition can be expressed as follows:
  \begin{enumerate}
  \item[(b)$^*$]
    the congruence $1$ of $\al{C}$ is nilpotent, and $\al{C}$ has congruences
    $\gamma_1,\dots,\gamma_\ell$ (for some $\ell>0$) such that 
\begin{enumerate}
\item[{\rm(1)}]
  $\gamma_1\wedge\dots\wedge\gamma_\ell = 0$,
\item[{\rm(2)}] \label{it:direct}
$(\gamma_1\wedge\dots\wedge\gamma_{i-1})\circ\gamma_i = 1$
for every $i\in [\ell]$, $i>1$,
and  
\item[{\rm(3)}]
  for each $i\in[\ell]$
  there exists a prime $p_i$ such that the algebra $\al{C}/\gamma_i$
  has size a power of $p_i$.
\end{enumerate}
\end{enumerate}
  Condition (2) here is equivalent to requiring that
    the natural homomorphism
    $\nu_i\colon\al{C}\to(\al{C}/\gamma_1)\times\dots\times(\al{C}/\gamma_i)$
    with kernel $\gamma_1\wedge\dots\wedge\gamma_i$ is surjective for every
    $i\in [\ell]$, $i>1$. Hence, (1)--(2) hold iff $\nu_\ell$
    yields an isomorphism 
    $\al{C}\cong(\al{C}/\gamma_1)\times\dots\times(\al{C}/\gamma_\ell)$.

By Theorem~\ref{thm-comm},
$1=\alpha^\frakC$ is a nilpotent congruence of $\al{C}$
  if and only if $\alpha$ is a nilpotent congruence of $\al{A}$.
  Furthermore, using the bijection ${}^\frakC$ from Corollary~\ref{cor-congr}
  between the interval $I(0,\alpha)$ of the congruence lattice of $\al{A}$
  and the congruence lattice of $\al{C}$,
  which also preserves $\wedge$ and $\circ$,
  we can write each $\gamma_i$ as $\gamma_i=\beta_i^\frakC$ for a unique
  $\beta_i\in I(0,\alpha)$, and we see that the existence of
  $\gamma_1,\dots,\gamma_\ell$ with properties (1)--(3) is equivalent to the
  existence of $\beta_1,\dots,\beta_\ell\in I(0,\alpha)$
  such that conditions (1)--(3) in (b) hold. For translating condition (3) in
  (b)$^*$ to condition (3) in (b) we also use the fact that the
  underlying set of each $\al{C}/\beta_i^\frakC$ is the product of
  the blocks of the congruence $\alpha/\beta_i$ of $\al{A}/\beta_i$.
\end{proof}

\section{The Subpower Membership Problem}
\label{sec-appl2}

Let $\class{K}$ be a finite set of finite algebras in a finite language.
The \emph{Subpower Membership Problem} for $\class{K}$ is the following
combinatorial decision problem:

\begin{enumerate}
\item[]
  $\SMP(\class{K})$
\item[]
  Input: $a_1,\dots,a_k,b\in\al{A}_1\times\dots\times\al{A}_n$ with
  $\al{A_1},\dots,\al{A}_n\in\class{K}$.
\item[]
  Question: Is $b$ a member of the subalgebra of
  $\al{A}_1\times\dots\times\al{A}_n$ generated
  by the set $\{a_1,\dots,a_k\}$?
\end{enumerate}
For background and recent results on the subpower membership problem, see
\cite{Ma:SMP, bul-mayr-steidl, steindl1, steindl2, shriner, BMS:SMP}.

In this section we 
will assume that the set $\class{K}$ of algebras fixed
for the Subpower Membership Problem satisfies the following condition
for some integer $\ddd\ge2$:
\begin{multline}
  \label{eq-gen-assumption}
  \qquad
  \text{$\var{V}$ is a variety in a finite language $\lngF$
    with a $\ddd$-cube term, and}\\
  \text{$\class{K}$ is a finite set of finite algebras in $\var{V}$}.
  \qquad
\end{multline}
For the definition of a $\ddd$-cube term, the reader is referred to
\cite{bimmvw, parallelogram}.
Note also that 
every variety with a cube term is congruence modular
by \cite[Thm.~4.2]{bimmvw} (for an easy proof, see
\cite{dent-kearnes-szendrei}),
therefore we may use concepts and results from the theory
of the modular commutator (see \cite{freese-mckenzie}).

It was proved in \cite[Thm.~6.4]{BMS:SMP}
that under assumption \eqref{eq-gen-assumption} on $\class{K}$,
$\SMP(\class{K})\in\PP$ provided
$\class{K}$ generates a residually small variety. This was done by reducing
the general problem $\SMP(\class{K})$ to a `well-structured' subproblem.

In this section we will employ the techniques developed in
Sections~\ref{sec-constr}--\ref{sec-functorC} to further reduce
this subproblem of $\SMP(\class{K})$, and use this reduction to extend
the result of \cite[Thm.~6.4]{BMS:SMP} 
on $\SMP(\class{K})\in\PP$ mentioned above
to a wider family of sets $\class{K}$.

Our starting point for the new reduction will be the reduction proved
in \cite{BMS:SMP}, therefore
we need to recall the relevant concepts and results from \cite{BMS:SMP}.

\begin{df}[{\cite[Def.~6.2]{BMS:SMP}}]
\label{def-dcoh}
Let $a_1,\dots,a_k,b\in\al A_1\times\dots\times\al A_n$
($\al A_1,\dots,\al A_n\in\class{K}$) be an input for $\SMP(\class{K})$
where $a_r=(a_{r1},\dots,a_{rn})$ ($r\in[k]$) and $b=(b_1,\dots,b_n)$.
We call this input \emph{$\ddd$-coherent} if the following
conditions are satisfied:
\begin{enumerate}
\item[(i)]
$n\ge\max\{\ddd,3\}$;
\item[(ii)]
$\al A_1,\dots,\al A_n$ are similar subdirectly irreducible algebras, 
  and each $\al A_\ell$ has abelian monolith $\mu_\ell$; let $\rho_\ell$
  denote the centralizer of $\mu_\ell$;
\item[(iii)]
  for all $I\subseteq[n]$ with $|I|<\max\{\ddd,3\}$,
  the subalgebra of $\prod_{i\in I}\al A_i$
  generated by $\{a_1|_I,\dots,a_k|_I\}$ is a subdirect subalgebra of
  $\prod_{i\in I}\al A_i$, and $b|_I$ 
  is a member of this subalgebra;
\item[(iv)]
for all $i,j\in[n]$,
the subalgebra of $\al A_i/\rho_i\times\al A_j/\rho_j$ generated by
\[
\{(a_{1i}/\rho_i,a_{1j}/\rho_j),\dots,(a_{ki}/\rho_i,a_{kj}/\rho_j)\}
\]
is the graph of an isomorphism $\al A_i/\rho_i\to\al A_j/\rho_j$.
\end{enumerate}
\end{df}

It is easy to see that $\ddd$-coherence for inputs
of $\SMP(\class{K})$ can be checked in polynomial time.

\begin{df}[{\cite[Def.~6.3]{BMS:SMP}}]
\label{def-dcohSMP}
We define $\SMPd(\class{K})$ to be 
the restriction of $\SMP(\class{K})$ to $\ddd$-coherent inputs.
\end{df}

\begin{thm}[{\cite[Thm.~6.4]{BMS:SMP}}]
\label{thm-polyequiv}
If $\var{V}$ is a variety in a finite language with a $\ddd$-cube term,
then the decision problems $\SMP(\class{K})$ and
$\SMPd(\HH\SSS\class{K})$ are polynomial time equivalent for
every finite family $\class{K}$ of finite algebras in $\var{V}$.
\end{thm}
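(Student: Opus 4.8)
The plan is to establish Theorem~\ref{thm-polyequiv} --- the polynomial time equivalence of $\SMP(\class{K})$ and $\SMPd(\HH\SSS\class{K})$ --- by exhibiting polynomial time reductions in both directions, citing \cite[Thm.~6.4]{BMS:SMP} for the bulk of the argument and using the assumption that $\var{V}$ has a $\ddd$-cube term throughout. First I would reduce $\SMP(\class{K})$ to $\SMP(\HH\SSS\class{K})$: an instance $a_1,\dots,a_k,b\in\al A_1\times\dots\times\al A_n$ with $\al A_i\in\class{K}$ can be replaced by an instance over a product of subdirectly irreducible algebras by taking, for each $i$, a subdirect decomposition $\al A_i\hookrightarrow\prod_{t}\al S_{i,t}$ into subdirectly irreducible quotients $\al S_{i,t}\in\HH\SSS\class{K}$; since $\class{K}$ is finite and each $\al A_i$ is finite, the number and size of the factors is bounded, so this is a polynomial time transformation that preserves membership. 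The converse reduction $\SMP(\HH\SSS\class{K})\le_p\SMP(\class{K})$ is the standard observation that homomorphic images and subalgebras of members of $\class{K}$ can be simulated using $\class{K}$ itself together with compatible relations, which is where having a cube term (hence finitely related / few subpowers behaviour) is used as in \cite{BMS:SMP}.

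Next I would reduce $\SMP(\HH\SSS\class{K})$ to its $\ddd$-coherent restriction $\SMPd(\HH\SSS\class{K})$. Given an instance, one first discards trivial cases: if $n<\max\{\ddd,3\}$ the problem is decidable in constant time (bounded size), so assume (i). One then checks, in polynomial time, conditions (iii) and (iv) of Definition~\ref{def-dcoh}; if any fails, the $\ddd$-cube term forces a definite answer (for (iii), failure of $b|_I$ to lie in the generated subalgebra for some small $I$ witnesses $b\notin\sg(\{a_1,\dots,a_k\})$; for (iv), the cube term guarantees the generated subalgebra of $\al A_i/\rho_i\times\al A_j/\rho_j$ is the graph of a homomorphism, and one checks it is an isomorphism, again using that failure is a certificate). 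One also passes to the subdirect hull so that (ii) holds, using $\HH\SSS$-closure. After these polynomial time preprocessing steps the surviving instances are exactly the $\ddd$-coherent ones, so the reduction is complete. All of this is precisely the content of \cite[Thm.~6.4]{BMS:SMP}, so the bulk of the proof is simply: ``This is \cite[Thm.~6.4]{BMS:SMP}; we record it here for reference,'' possibly with a sentence recalling why each defining condition of $\ddd$-coherence can be enforced or refuted in polynomial time.

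I expect the main obstacle to be that this theorem is \emph{not} something to prove from scratch at all --- it is an imported result --- so the real task is to present it cleanly and make sure the hypotheses match: one must verify that ``finite language with a $\ddd$-cube term'' and ``$\class{K}$ a finite family of finite algebras in $\var{V}$'' are exactly the hypotheses under which \cite[Thm.~6.4]{BMS:SMP} was proved, and that $\HH\SSS\class{K}$ is again a finite family of finite algebras (which it is, since $\class{K}$ is finite and each algebra is finite). If instead a self-contained argument is wanted, the hard step is the forward reduction's verification that enforcing conditions (iii) and (iv) can be done in polynomial time and that their failure yields a correct answer: this rests on the ``few subpowers'' consequences of the cube term (polynomially sized generating sets for subpowers, via \cite{bimmvw, parallelogram}), and on the fact that for algebras with a cube term the subalgebra of a two-generated-style product generated by given pairs is automatically the graph of a homomorphism between the quotients by the centralizers. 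I would therefore keep the proof short, delegate the substance to \cite{BMS:SMP}, and only spell out the easy $\HH\SSS$-bookkeeping and the constant-time handling of small $n$.
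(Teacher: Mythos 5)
You correctly identify that Theorem~\ref{thm-polyequiv} is an imported result from \cite[Thm.~6.4]{BMS:SMP} and that the paper neither supplies nor needs a proof of it here; this matches the paper's treatment exactly. Your speculative sketch of what a self-contained argument would look like is a nice-to-have but contains some imprecisions (e.g., the claim that a cube term by itself guarantees the generated subalgebra of $\al A_i/\rho_i\times\al A_j/\rho_j$ is functional is not how the reduction in \cite{BMS:SMP} actually works), but since you explicitly delegate the substance to the cited source, this does not affect the correctness of your main answer.
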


Now we are ready to define our new reduction for $\SMP(\class{K})$.

\begin{df}
\label{def-dcentr}
An input $a_1,\dots,a_k,b\in\al A_1\times\dots\times\al A_n$
($\al A_1,\dots,\al A_n\in\class{K}$) for $\SMP(\class{K})$ will be
called \emph{$\ddd$-central} if it satisfies  
conditions (i) and (iii) in Definition~\ref{def-dcoh} and the following
new condition:
\begin{enumerate}
\item[(ii)$'$]
$\al A_1,\dots,\al A_n$ are subdirectly irreducible algebras such that 
  the monolith $\mu_\ell$ of each $\al A_\ell$ is
  a central congruence of $\al{A}_\ell$  (i.e., $[1,\mu_\ell]=0$,
  hence in particular, $\mu_\ell$ is abelian), and
  the monoliths $\mu_1,\dots,\mu_n$ have the same prime characteristic.
\end{enumerate}
\end{df}  

As before, it is clear that $\ddd$-centrality for inputs
of $\SMP(\class{K})$ can be checked in polynomial time.

\begin{df}
\label{def-dcentrSMP}
We define $\SMPdc(\class{K})$ to be 
the restriction of $\SMP(\class{K})$ to $\ddd$-central inputs.
\end{df}

Our reduction theorem will reduce the solution of $\SMP(\class{K})$
to the solution of $\SMPdc(\class{K}^\star)$ for several new sets
$\class{K}^\star$
of algebras constructed from $\class{K}$, which are defined as follows.

\begin{df}
\label{def-newsets}
Given $\class{K}$ as in \eqref{eq-gen-assumption},
let $\bar{\bar{\class{K}}}$ denote the set
of all subdirectly irreducible algebras $\al{S}$ in 
$\HH\SSS\class{K}$
whose monolith $\mu_{\al{S}}$ is abelian.
Recall that similarity of subdirectly irreducible algebras is an
equivalence relation on $\bar{\bar{\class{K}}}$,
so let
$\bar{\class{K}}_1,\dots,\bar{\class{K}}_q$ denote its equivalence classes.
For each $\ell\in[q]$, let $\al{I}_\ell=([m_\ell],\mathcal{F})$ be a fixed
algebra that is isomorphic to $\al{S}/(0:\mu_{\al{S}})$ for all
$\al{S}\in\bar{\class{K}}_\ell$, and let
\[
\class{K}^\star_\ell:=
\{\frak{C}(\al{S},\chi):\al{S}\in\bar{\class{K}}_\ell,
\ \text{$\chi$ is an onto homomorphism $\al{S}\to\al{I}_\ell$
  with kernel $(0:\mu_{\al{S}})$}\},
\]
where $\frak{C}$ is the functor with domain category
$\bigl(\Alg(\lngF)\,{\downdownarrows}\,\al{I}_\ell\bigr)$.
\end{df}

\begin{thm}
  \label{thm-dcentr-reduc}
  Let $\var{V}$ be a variety in a finite language $\lngF$
  with a $\ddd$-cube term.
  For any finite set $\class{K}$ of finite algebras in $\var{V}$,
  the sets $\class{K}^\star_1,\dots,\class{K}^\star_q$
  of algebras have the following properties:
  \begin{enumerate}
   \item[{\rm(1)}]
    Each $\class{K}^\star_\ell$ $(\ell\in[q])$ is a finite set of finite
    algebras in a variety in a finite language with a $\ddd$-cube term.
   \item[{\rm(2)}]
    Each $\class{K}^\star_\ell$ $(\ell\in[q])$
    is a set of subdirectly irreducible algebras whose monoliths
    are central and have the same prime characteristic.
  \item[{\rm(3)}]
    $\SMP(\class{K})$ is polynomial time reducible to the problems
    $\SMPdc(\class{K}^\star_\ell)$ \mbox{$(\ell\in[q])$}, and the sets
    $\class{K}^\star_\ell$ $(\ell\in[q])$ of algebras
    can be computed from $\class{K}$ in constant time.
  \end{enumerate}
\end{thm}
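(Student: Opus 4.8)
The plan is to verify the three claims in turn, drawing on the structural results of Sections~\ref{sec-constr}--\ref{sec-functorC} and the reduction machinery of \cite{BMS:SMP} recalled above. For item~(1), fix $\ell\in[q]$. The algebras in $\class{K}^\star_\ell$ are of the form $\frak{C}(\al{S},\chi)$ for $\al{S}\in\bar{\class{K}}_\ell\subseteq\HH\SSS\class{K}$, and all of them live in the language $\hat{\lngF_{\al{I}_\ell}}$, which is finite because $\lngF$ is finite and $\al{I}_\ell$ is finite. There are only finitely many such $\frak{C}(\al{S},\chi)$ up to isomorphism: $\bar{\class{K}}_\ell$ is a subset of $\HH\SSS\class{K}$, which is finite up to isomorphism since $\class{K}$ consists of finitely many finite algebras, and for each $\al{S}$ there are only finitely many choices of $\chi$. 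Finiteness of each member is clear. Membership in a variety with a $\ddd$-cube term follows from Corollary~\ref{cor-clone-idem}(2): a $\ddd$-cube term is witnessed by an idempotent Maltsev condition, $\var{V}$ satisfies it, hence so does $\var{V}^*_\ell$ (the variety of all isomorphic copies of algebras in $\frak{C}\bigl(\var{V}\,{\downdownarrows}\,\al{I}_\ell\bigr)$, using Corollary~\ref{cor-var}), and $\class{K}^\star_\ell$ generates a subvariety of $\var{V}^*_\ell$.

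For item~(2), fix $\al{S}\in\bar{\class{K}}_\ell$ with abelian monolith $\mu=\mu_{\al{S}}$, let $\rho:=(0:\mu)$, and let $\chi$ be an onto homomorphism $\al{S}\to\al{I}_\ell$ with kernel $\rho$, so $\al{C}:=\frak{C}(\al{S},\chi)$ is the corresponding member of $\class{K}^\star_\ell$. Since $\mu$ is abelian, $[1,\mu]\le[\mu,\mu]=0$; combined with $[\rho,\mu]=0$ this gives $[1\vee\rho,\mu]=[1,\mu]=0$, so in fact $\rho=1$ is forced? No — rather, the point is simply that $\mu\le\rho$ always (because $[\mu,\mu]=0$ means $\mu\le(0:\mu)=\rho$), so the interval $I(0,\rho)$ contains every congruence of $\al{S}$ below $\rho$, and in particular it contains $0$ and $\mu$. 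By Corollary~\ref{cor-congr}(1), the map ${}^*$ is an isomorphism from $I(0,\rho)$ onto $\Con(\al{C})$; since $\mu$ is the unique atom of $\Con(\al{S})$ and $\mu\le\rho$, its image $\mu^*$ is the unique atom of $\Con(\al{C})$, so $\al{C}$ is subdirectly irreducible with monolith $\mu^*$. By Theorem~\ref{thm-comm}, $[1_{\al{C}},\mu^*]=[\,\rho^*,\mu^*]=[\rho,\mu]^*=0^*=0$, so $\mu^*$ is a central congruence of $\al{C}$. Finally, for the prime characteristic: by Theorem~\ref{thm-tct} applied to the covering pair $0\prec\mu$ in $I(0,\rho)$, we have $\typ_{\al{S}}(0,\mu)=\typ_{\al{C}}(0,\mu^*)$, and since a central (hence abelian) monolith in a variety with a cube term — which is congruence modular — has type $\mathbf{2}$, the prime characteristic of $\langle 0,\mu^*\rangle$ equals that of $\langle 0,\mu\rangle$. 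Now within a single similarity class $\bar{\class{K}}_\ell$, similar subdirectly irreducible algebras have weakly isomorphic monolith minimal algebras, hence the characteristic of $\langle 0,\mu_{\al{S}}\rangle$ is the same for all $\al{S}\in\bar{\class{K}}_\ell$; therefore all monoliths of algebras in $\class{K}^\star_\ell$ share one prime characteristic.

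For item~(3), the strategy is: given an input $a_1,\dots,a_k,b\in\al{A}_1\times\dots\times\al{A}_n$ for $\SMP(\class{K})$, first apply Theorem~\ref{thm-polyequiv} to reduce in polynomial time to a $\ddd$-coherent input for $\SMPd(\HH\SSS\class{K})$ — i.e.\ an input $a'_1,\dots,a'_{k'},b'\in\al{S}_1\times\dots\times\al{S}_{n'}$ where the $\al{S}_i$ are similar subdirectly irreducible algebras in $\HH\SSS\class{K}$ with abelian monoliths $\mu_i$ and centralizers $\rho_i$, and the graph of an isomorphism $\al{S}_i/\rho_i\to\al{S}_j/\rho_j$ is generated inside each $\al{S}_i/\rho_i\times\al{S}_j/\rho_j$ by the projected generators. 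All the $\al{S}_i$ then lie in one class $\bar{\class{K}}_\ell$, and condition~(iv) of $\ddd$-coherence lets us identify $\al{S}_i/\rho_i$ with the fixed algebra $\al{I}_\ell$ via onto homomorphisms $\chi_i$ (obtained by composing the quotient map with the prescribed isomorphisms) so that the generators $a'_r$ already have matching $\al{I}_\ell$-images coordinatewise; i.e.\ the $\chi_i$ together glue the coordinates so that the subalgebra generated lands in the fibered product $\prodI_{i}\al{S}_i$. Now apply the isomorphism~${}^*$ of Theorem~\ref{thm-subprod}(1),(3): using the $\tilde{\phantom{G}}$-map with padding elements $d\us{i}$ taken from the appropriate classes, replace each $a'_r$ by $\tilde{a}'_r\in\prod_i\frak{C}(\al{S}_i,\chi_i)=\prod_i\al{C}_i$ with $\al{C}_i\in\class{K}^\star_\ell$, and replace $b'$ by $\tilde{b}'$ — but only after checking that $b'$ lies in the fibered product (if not, answer NO outright, which is correct since membership fails). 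Theorem~\ref{thm-subprod}(3) guarantees that $b'$ is generated by $\{a'_1,\dots,a'_{k'}\}$ in $\prodI_i\al{S}_i$ if and only if $\tilde{b}'$ is generated by $\{\tilde{a}'_1,\dots,\tilde{a}'_{k'}\}$ in $\prod_i\al{C}_i$; and this latter is an instance of $\SMP(\class{K}^\star_\ell)$. It remains to observe that the resulting input is $\ddd$-central: conditions~(i) and~(iii) are preserved because ${}^*$ preserves subdirect subalgebras, intersections, projections, and membership (Corollary~\ref{cor-A-n-alpha}(2) and the product-preservation in Theorem~\ref{thm-subprod}(2)), and condition~(ii)$'$ holds by item~(2) just proved. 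All steps — the $\HH\SSS$-reduction, the gluing, the $\tilde{\phantom{G}}$-translation, and the checks — are polynomial time; and the sets $\class{K}^\star_\ell$ themselves depend only on $\class{K}$, not on the input, so they are computed once, in constant time relative to the input size.

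The main obstacle I expect is item~(3): specifically, making precise how condition~(iv) of $\ddd$-coherence ($\al{S}_i/\rho_i\cong\al{S}_j/\rho_j$ with the isomorphism graph generated by the projected input) allows one to choose the homomorphisms $\chi_i\colon\al{S}_i\to\al{I}_\ell$ \emph{compatibly}, so that the subalgebra generated by $\{a'_1,\dots,a'_{k'}\}$ actually sits inside the fibered product $\prodI_i\al{S}_i$ — only then is the functor $\frak{C}$ (equivalently the map~${}^*$ of Theorem~\ref{thm-subprod}) applicable. One must also handle the degenerate case where the generated subalgebra is not subdirect or $b'$ is outside the fibered product, and verify that $\ddd$-centrality condition~(iii) (which is about subalgebras of products of $<\max\{\ddd,3\}$ factors of the \emph{new} algebras) transfers correctly — this uses that ${}^*$ commutes with projection to subsets of coordinates, which is exactly Theorem~\ref{thm-subprod}(2). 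The commutator-theoretic and type-theoretic inputs (items~(1),(2)) are comparatively routine given Corollaries~\ref{cor-clone-idem},~\ref{cor-congr} and Theorems~\ref{thm-comm},~\ref{thm-tct}.
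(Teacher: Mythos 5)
Your proposal is correct and follows essentially the same route as the paper: Corollaries~\ref{cor-var} and \ref{cor-clone-idem} for part~(1); Corollary~\ref{cor-congr}, Theorem~\ref{thm-comm} (the paper invokes its specialization Corollary~\ref{cor-centr}), and Theorem~\ref{thm-tct} for part~(2); and for part~(3) the reduction via Theorem~\ref{thm-polyequiv} to $\SMPd(\HH\SSS\class{K})$, the construction of compatible $\chi_i$'s from condition~(iv) of $\ddd$-coherence, and the $\tilde{\phantom{G}}$-translation, with Theorem~\ref{thm-subprod}(2),(3) doing the work. Two small remarks: the inequality ``$[1,\mu]\le[\mu,\mu]$'' in your item~(2) runs the wrong way (commutators are monotone, so $[\mu,\mu]\le[1,\mu]$), but you immediately discard that line of thought and give the correct argument; and the separate check that $b'$ lies in the fibered product is harmless but unnecessary, since conditions~(iii)--(iv) of $\ddd$-coherence already force it.
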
  

\begin{proof}
  We will use all notation introduced in Definition~\ref{def-newsets}.
  For the proofs of statements (1) and (2) we fix an $\ell\in[q]$.
  
  To prove (1), 
  let $\var{W}_\ell$ denote the subvariety of $\var{V}$ generated by
  $\bar{\class{K}}_\ell$. Since
  \begin{equation}
    \label{eq-pairs}
\{(\al{S},\chi):\al{S}\in\bar{\class{K}}_\ell,
\ \text{$\chi$ is an onto homomorphism $\al{S}\to\al{I}_\ell$
  with kernel $(0:\mu_{\al{S}})$}\}
\end{equation}
is a finite set of objects in $\Wcommaell$ with all
$\al{S}\in\bar{\class{K}}_\ell$ finite, it follows from 
  Corollary~\ref{cor-var} that
  $\class{K}^\star_\ell$ is a finite set of finite algebras in
  the variety $\var{W}_\ell^*$ (whose language
  $\hat{\mathcal{F}}_{\al{I}_\ell}$ is finite).
  Since $\var{V}$ has a $\ddd$-cube term, so does its subvariety $\var{W}_\ell$.
  The existence of a $\ddd$-cube term is a strong idempotent Maltsev condition,
  therefore the variety $\var{W}_\ell^*$ also has a
  $\ddd$-cube term by Corollary~\ref{cor-clone-idem}.

  For statement (2), let $\al{C}:=\frak{C}(\al{S},\chi)$ be any algebra
  in $\class{K}^\star_\ell$. Here $(\al{S},\chi)$ is a member of the set in
  \eqref{eq-pairs}, so
  $\al{S}$ is subdirectly irreducible
  with abelian monolith $\mu_{\al{S}}$, and for the centralizer
  $\alpha_{\al{S}}=(0:\mu_{\al{S}})\,(\ge\mu_{\al{S}})$ we have that
  $\alpha_{\al{S}}=\ker(\chi)$.
  The map $^*$ described in Corollary~\ref{cor-congr} is an isomorphism
  between the interval $I(0,\alpha_{\al{S}})$ in the congruence lattice of
  $\al{S}$ and the congruence lattice of $\al{C}$. Therefore,
  $\al{C}$ is subdirectly irreducible with monolith $\mu_{\al{S}}^*$.
  Moreover, by Corollary~\ref{cor-centr}, we have that
  $(0:\mu_{\al{S}}^*)=(0:\mu_{\al{S}})^*=\alpha^*=1$, so
  the monolith $\mu_{\al{S}}^*$ of $\al{C}$ is central. 
  Since $\al{S}\in\var{V}$ and $\var{V}$ is congruence modular,
  Theorem~\ref{thm-tct} and the fact that $\mu_{\al{S}}$ is
  abelian imply that
  $\typ_{\al{C}}(0,\mu_{\al{S}}^*)=\typ_{\al{S}}(0,\mu_{\al{S}})={\bf2}$, and
  the congruences $\mu_{\al{S}}^*$ and $\mu_{\al{S}}$ --- i.e., 
  the prime quotients $\langle 0,\mu_{\al{S}}^*\rangle$ and
  $\langle 0,\mu_{\al{S}}\rangle$ --- have the same prime characteristic.
  Since all algebras $\al{S}$ in \eqref{eq-pairs} are similar, and hence
  by the definition or by the characterization
  of similarity in \cite[Def.~10.6, Thm.~10.8]{freese-mckenzie}
  they have the same characteristic,
  we conclude that the monoliths of all members of
  $\class{K}^\star_\ell$ have the same prime characteristic.
  
  In statement (3) it is clear that the sets
  $\class{K}^\star_\ell$ $(\ell\in[q])$ of algebras
  can be computed from $\class{K}$ in constant time.
  We need to argue that $\SMP(\class{K})$ is polynomial
  time reducible to the problems
  $\SMPdc(\class{K}^\star_\ell)$ ($\ell\in[q]$).
  In view of Theorem~\ref{thm-polyequiv}, it suffices to show that
  $\SMPd(\HH\SSS\class{K})$ is polynomial time reducible to the problems
  $\SMPdc(\class{K}^\star_\ell)$ ($\ell\in[q]$). We will prove this by
  describing how to assign to every input $a_1,\dots,a_k,b$ for
  $\SMPd(\HH\SSS\class{K})$
  a number $\ell\in[q]$ and an input
  $\tilde{a}_1,\dots,\tilde{a}_k,\tilde{b}$ for
  $\SMPdc(\class{K}^\star_\ell)$ in such a way that
  \begin{itemize}
  \item
    the answer to the input $a_1,\dots,a_k,b$ is `yes'
    if and only if the answer
    to the input $\tilde{a}_1,\dots,\tilde{a}_k,\tilde{b}$ is `yes', and
  \item
    $\tilde{a}_1,\dots,\tilde{a}_k,\tilde{b}$ can be computed from
    $a_1,\dots,a_k,b$ in polynomial time.
  \end{itemize}  
  
Let $a_1,\dots,a_k,b\in\prod_{j\in[n]}\al A_j$
($\al A_1,\dots,\al A_n\in\HH\SSS\class{K}$) be an input for
$\SMPd(\HH\SSS\class{K})$.
Then this is a $\ddd$-coherent input for $\SMP(\class{K})$, that is,
conditions (i)--(iv) from Definition~\ref{def-dcoh} hold.
In particular, (ii) implies that there is a unique
$\ell\in[q]$ such that $\al{A}_1,\dots,\al{A}_n\in\bar{\class{K}}_\ell$.
This $\ell$ can be found in constant time.
Now let us fix $\al{I}:=\al{I}_\ell$ as described in
Definition~\ref{def-newsets},
choose an isomorphism $\phi_1\colon\al{A}_1/\rho_1\to\al{I}$, and
for $2\le j\le n$ define the isomorphisms
$\phi_j\colon\al{A}_j/\rho_j\to\al{I}$
by $\phi_j:=\phi_1\circ\iota_{j,1}$ where $\iota_{j,1}$ is the isomorphism
$\al{A}_j/\rho_j\to\al{A}_1/\rho_1$ from condition~(iv) in
Definition~\ref{def-dcoh}.
For each $j\in[n]$ let $\chi_j\colon\al{A}_j\to\al{I}$ be defined by
$\chi_j:=\phi_j\circ\nu_j$ where $\nu_j\colon\al{A}_j\to\al{A}_j/\rho_j$
is the natural homomorphism.
So, $\ker(\chi_j)=\rho_j=(0:\mu_j)$ for every $j\in[n]$.
Clearly, the algebra $\al{I}$ and each isomorphism $\iota_{j,1}$ can
be computed in constant time, so the homomorphisms $\chi_j$ ($j\in[n]$)
can be computed in $O(n)$ time.

Let $\al{B}$ and $\al{B}^+$ denote the subalgebras of $\prod_{j\in[n]}\al{A}_j$
generated by the sets $\{a_1,\dots,a_k\}$ and $\{a_1,\dots,a_k,b\}$,
respectively.
Using the homomorphisms $\chi_j$ $(j\in[n])$ we can express
condition (iv) as follows:
$\al{B}$ is a subalgebra of the algebra $\prodI_{j\in[n]}\al{A}_j$, where
$(\prodI_{j\in[n]}\al{A}_j;\chi)$ with $\chi$ defined by
$(x_1,\dots,x_n)\mapsto\chi_1(x_1)=\dots=\chi_n(x_n)$, is
the product of the objects $(\al{A}_j,\chi_j)$ ($j\in[n]$)
in the category $\comma$.
Condition (iii) of Definition~\ref{def-dcoh} implies that
$\al{B}$ and $\al{B}^+$ are both subdirect subalgebras of
$\prod_{j\in[n]}\al{A}_j$, moreover,
the requirements on $b$ in this condition ensure that 
$\al{B}^+$ is a subalgebra of $\prodI_{j\in[n]}\al{A}_j$ as well.
Therefore,
the map $\chi$ restricts to $\al{B}$ and $\al{B}^+$ as onto
homomorphisms $\chi|_{\al{B}}\colon\al{B}\to\al{I}$
and $\chi|_{\al{B}^+}\colon\al{B}^+\to\al{I}$.
The kernel classes of these homomorphisms are the sets
$B\cap D\us{i}$ and $B^+\cap D\us{i}$
($i\in[m]$), respectively, where $D\us{i}=\prod_{j\in[n]}\chi^{-1}\ls{j}(i)$.
Hence,
$B^+\cap D\us{i}\supseteq B\cap D\us{i}\not=\emptyset$
for every $i\in[m]$.

We can compute
representatives $d\us{i}\in B\cap D\us{i}$ for each $i\in[m]$ by
generating all elements of 
$\al{B}/\ker(\chi|_{\al{B}})\cong\al{I}$ using $\chi(a_1),\dots,\chi(a_k)$
(at most $m$ distinct elements),
and replicating the same computation using at most $m$ of the generators
$a_1,\dots,a_k$ instead, one from each set
$\{a_1,\dots,a_k\}\cap\chi^{-1}\bigl(\chi(a_r)\bigr)$ ($r\in[k]$).
This requires $O(kn)$ time. 

Now let $\al{C}_j:=\frak{C}(\al{A}_j,\chi_j)$ for every $j\in[n]$, and
let $\al{B}^*$ and $(\al{B}^+)^*$ 
be the subalgebras of $\prod_{j\in[n]}\al{C}_j$
obtained from $\al{B}$ and $\al{B}^+$ by applying the map ${}^*$
described in Theorem~\ref{thm-subprod}(1).
Furthermore, let
$\tilde{\phantom{G}}\colon \prodI_{j\in[n]}A_j\to \prod_{j\in[n]} C_j$
be the function
defined in Theorem~\ref{thm-subprod}(3), using the elements
$d\us{i}$ ($i\in[m]$) from the preceding paragraph as
padding elements.
Let $\tilde{a}_1,\dots,\tilde{a}_k,\tilde{b}$ be the elements of
$(B^+)^*$ obtained from the given input elements $a_1,\dots,a_k,b$ 
by applying this function. Clearly,
$\tilde{a}_1,\dots,\tilde{a}_k,\tilde{b}$ can be computed from
$a_1,\dots,a_k,b$ and $d\us{1},\dots,d\us{m}$ in $O\bigl(kn\bigr)$ time.

Since $\{a_1,\dots,a_k\}$ generates $\al{B}$, and
$\{a_1,\dots,a_k,b\}$ generates $\al{B}^+$, 
Theorem~\ref{thm-subprod}(3), together with the fact 
$d\us{1},\dots,d\us{m}\in B\subseteq B^+$, implies 
that
\begin{equation}
  \label{eq-generates}
  \begin{matrix}
  \text{$\{\tilde{a}_1,\dots,\tilde{a}_k\}$ is a generating set for $\al{B}^*$,
      and}\hfill\\
\text{$\{\tilde{a}_1,\dots,\tilde{a}_k,\tilde{b}\}$ is a generating set for
  $(\al{B}^+)^*$.}\hfill
\end{matrix}
\end{equation}
  
Now we want to show that
\begin{equation}
  \label{eq-new-input}
\tilde{a}_1,\dots,\tilde{a}_k,\tilde{b}\in\al{C}_1\times\dots\times\al{C}_n
\ (\al{C}_1,\dots,\al{C}_n\in\class{K}^\star_\ell)
\end{equation}
is a correct input for
$\SMPdc(\class{K}^\star_\ell)$, that is, $\tilde{a}_1,\dots,\tilde{a}_k,\tilde{b}$
is a $\ddd$-central input for $\SMP(\class{K}^\star_\ell)$.
We need to check that conditions (i), (ii)$'$, and (iii) from
Definition~\ref{def-dcentr} hold for the new input \eqref{eq-new-input},
that is, they hold for
$\tilde{a}_r$ ($r\in[k]$), $\tilde{b}$,
$\al{C}_j$ ($j\in[n]$), and $\class{K}^\star_\ell$ in place of
$a_r$ ($r\in[k]$), $b$,
$\al{A}_j$ ($j\in[n]$), and $\class{K}$.
Note first that our general assumption \eqref{eq-gen-assumption}
(suppressed in Definition~\ref{def-dcentr}) holds for $\class{K}^\star_\ell$
by Theorem~\ref{thm-dcentr-reduc}(1).
Condition (i) from Definition~\ref{def-dcentr}
holds for the new input \eqref{eq-new-input},
because it is identical to condition (i) of
$\ddd$-coherence for the original input $a_1,\dots,a_k,b$.
Condition (ii)$'$ from Definition~\ref{def-dcentr}
holds for \eqref{eq-new-input}
by Theorem~\ref{thm-dcentr-reduc}(2), since
$\al{C}_1,\dots,\al{C}_n\in\class{K}^\star_\ell$.
To check condition (iii) from Definition~\ref{def-dcentr}
for the new input \eqref{eq-new-input},
let $I\subseteq[n]$ be such that $|I|<\max\{\ddd,3\}$.
In view of \eqref{eq-generates},
the subalgebra of $\prod_{j\in I}\al{C}_j$ generated by
$\{\tilde{a}_1|_I,\dots,\tilde{a}_k|_I\}$ is $\al{B}^*|_I$, while
the subalgebra of $\prod_{j\in I}\al{C}_j$ generated by
$\{\tilde{a}_1|_I,\dots,\tilde{a}_k|_I,\tilde{b}|_I\}$
is $(\al{B}^+)^*|_I$.
Therefore, condition (iii) from Definition~\ref{def-dcentr}
for \eqref{eq-new-input} and for the chosen $I$
is equivalent to saying
that $\al{B}^*|_I$ projects to each coordinate $j\in I$ to be $\al{C}_j$,
and $(\al{B}^+)^*|_I=\al{B}^*|_I$.
To prove that $\al{B}^*|_I$ and $(\al{B}^+)^*|_I$ meet these conditions,
notice that our assumption that
the original input $a_1,\dots,a_k,b$ is $\ddd$-coherent, implies that
the analogous conditions hold for $\al{B}|_I$, $\al{B}^+|_I$, and $\al{A}_j$
$(j\in I)$.
Consequently, they also hold if we pass to their images under
the map ${}^*$
described in Theorem~\ref{thm-subprod}(1).
Thus,
$(\al{B}|_I)^*$ projects to each coordinate $j\in I$ to be $\al{C}_j$,
and $(\al{B}^+|_I)^*=(\al{B}|_I)^*$.
According to Theorem~\ref{thm-subprod}(2),
projection onto a set of coordinates is preserved by $^*$, 
therefore these conditions hold with the order of $|_I$ and $^*$ switched.
This proves (iii) from Definition~\ref{def-dcentr}
for the new input \eqref{eq-new-input}, and hence
finishes the proof that the input \eqref{eq-new-input} is
$\ddd$-central.

Finally, using again \eqref{eq-generates}, we get that
\begin{align*}
\text{the an}&\text{swer of $\SMPdc(\class{K}^\star_\ell)$ to the input
  $\tilde{a}_1,\dots,\tilde{a}_k,\tilde{b}$ is `yes'}\\
& \Leftrightarrow\quad
\tilde{b}\in\al{B}^*
\quad\Leftrightarrow\quad
(\al{B}^+)^*=\al{B}^*\quad
\stackrel{\text{Thm~\ref{thm-subprod}}}{\Leftrightarrow}\quad
(\al{B}^+)=\al{B}
\quad\Leftrightarrow\quad
b\in\al{B}\\
& \Leftrightarrow\quad
\text{the answer of $\SMPd(\HH\SSS\class{K})$ to the input
  $a_1,\dots,a_k,b$ is `yes'}.
\end{align*}
The crucial step is $\stackrel{\text{Thm~\ref{thm-subprod}}}{\Leftrightarrow}$,
where we use the bijective property of the map ${}^*$ in
Theorem~\ref{thm-subprod}(1).
The proof of Theorem~\ref{thm-dcentr-reduc} is complete.
\end{proof}

\begin{cor}
  \label{cor-P-reduc}
  The following conditions are equivalent:
  \begin{enumerate}
  \item[{\rm(a)}]
    $\SMP(\class{K})\in\PP$ for every finite set $\class{K}$ of finite
    algebras in a variety in a finite language with a $\ddd$-cube term.
  \item[{\rm(b)}]
    $\SMPdc(\class{K})\in\PP$ for every finite set $\class{K}$ of finite
    algebras in a variety in a finite language with a $\ddd$-cube term.
  \end{enumerate}
\end{cor}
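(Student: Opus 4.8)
The plan is to prove the two implications of the equivalence separately, reducing each to material already established; neither direction should require substantial new work.

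First I would dispatch (a)$\Rightarrow$(b), which is immediate. By Definition~\ref{def-dcentrSMP}, $\SMPdc(\class{K})$ is precisely the restriction of $\SMP(\class{K})$ to $\ddd$-central inputs, and (as remarked right after Definition~\ref{def-dcentr}) $\ddd$-centrality of an input can be tested in polynomial time. So any polynomial-time algorithm witnessing $\SMP(\class{K})\in\PP$ yields a polynomial-time algorithm for $\SMPdc(\class{K})$: first verify $\ddd$-centrality, then run the $\SMP$ algorithm. Hence $\SMPdc(\class{K})\in\PP$ for every $\class{K}$ of the form in (a).

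For (b)$\Rightarrow$(a) I would invoke Theorem~\ref{thm-dcentr-reduc}. Given a finite set $\class{K}$ of finite algebras in a variety in a finite language with a $\ddd$-cube term, let $\class{K}^\star_1,\dots,\class{K}^\star_q$ be the sets produced by that theorem. Part~(1) tells us that each $\class{K}^\star_\ell$ is again a finite set of finite algebras in a variety in a finite language with a $\ddd$-cube term, so hypothesis~(b) applies to every $\class{K}^\star_\ell$ and gives $\SMPdc(\class{K}^\star_\ell)\in\PP$ for all $\ell\in[q]$. Part~(3) tells us that the sets $\class{K}^\star_\ell$ can be computed from $\class{K}$ in constant time and that $\SMP(\class{K})$ is polynomial-time reducible to the family of problems $\SMPdc(\class{K}^\star_\ell)$ $(\ell\in[q])$. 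Composing this reduction with the polynomial-time decision procedures for the finitely many $\SMPdc(\class{K}^\star_\ell)$ then yields a polynomial-time decision procedure for $\SMP(\class{K})$, so $\SMP(\class{K})\in\PP$.

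I do not expect a genuine obstacle: all the substantive content — the construction of the sets $\class{K}^\star_\ell$ from $\class{K}$ via the functor $\frak{C}$, the verification that each $\class{K}^\star_\ell$ meets the standing hypothesis~\eqref{eq-gen-assumption}, and the polynomial-time reduction itself — has been carried out in Theorem~\ref{thm-dcentr-reduc}. The only points I would state explicitly are that the number $q$ of auxiliary problems and the sets $\class{K}^\star_\ell$ depend only on $\class{K}$ and not on the input (so the reduction is uniform in the input size), and that a reduction of $\SMP(\class{K})$ to a fixed finite list of problems, each lying in $\PP$, still places $\SMP(\class{K})$ in $\PP$.
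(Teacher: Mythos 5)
Your proof is correct and matches the paper's intent: the paper states this corollary without proof as an immediate consequence of Theorem~\ref{thm-dcentr-reduc}, and your two directions — (a)$\Rightarrow$(b) because $\SMPdc(\class{K})$ is a subproblem of $\SMP(\class{K})$, and (b)$\Rightarrow$(a) via the constant-time construction of the $\class{K}^\star_\ell$ (each again satisfying the standing hypothesis by part~(1)) together with the polynomial-time reduction of part~(3) — are exactly the intended argument.
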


\begin{thm}
  \label{thm-P-snilp-centrs}
  Let $\var{V}$ be a variety in a finite language
  $\lngF$ with a $\ddd$-cube term.
  If $\class{K}$ is a finite set of finite algebras in $\var{V}$ such that
  \begin{enumerate}
  \item[$(\ddagger)$]
    for every subdirectly irreducible algebra $\al{S}\in\HH\SSS\class{K}$
    with abelian monolith $\mu_{\al{S}}$ the centralizer of $\mu_{\al{S}}$
    is supernilpotent,
  \end{enumerate}
  then $\SMP(\class{K})\in\PP$.
\end{thm}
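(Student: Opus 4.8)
The plan is to chain together the reduction in Theorem~\ref{thm-dcentr-reduc}, the commutator preservation of Theorem~\ref{thm-comm}, and the known polynomiality of the subpower membership problem for supernilpotent algebras due to \cite{Ma:SMP}. First I would apply Theorem~\ref{thm-dcentr-reduc}(3): it reduces $\SMP(\class{K})$ in polynomial time to the finitely many problems $\SMPdc(\class{K}^\star_\ell)$ $(\ell\in[q])$, and the sets $\class{K}^\star_1,\dots,\class{K}^\star_q$ can be computed from $\class{K}$ in constant time. Since $\SMPdc(\class{K}^\star_\ell)$ is a restriction of $\SMP(\class{K}^\star_\ell)$, it is enough to prove that $\SMP(\class{K}^\star_\ell)\in\PP$ for each $\ell\in[q]$; so fix $\ell$.

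The key step is to observe that every algebra in $\class{K}^\star_\ell$ is supernilpotent. Let $\al{C}=\frak{C}(\al{S},\chi)\in\class{K}^\star_\ell$, where $\al{S}\in\bar{\class{K}}_\ell$ is subdirectly irreducible with abelian monolith $\mu_{\al{S}}$ and $\chi\colon\al{S}\to\al{I}_\ell$ is onto with $\alpha:=\ker(\chi)=(0:\mu_{\al{S}})$. Because $\mu_{\al{S}}$ is abelian we have $\mu_{\al{S}}\le\alpha$, so $\alpha$ is the top element of the interval $I(0,\alpha)$, and by hypothesis $(\ddagger)$ the congruence $\alpha$ is supernilpotent in $\al{S}$; say $[\,\alpha,\dots,\alpha\,]=0$ with $k+1$ copies of $\alpha$, for some $k\ge1$. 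By Corollary~\ref{cor-congr}(1) the map ${}^*$ is an isomorphism from $I(0,\alpha)$ onto $\Con(\al{C})$ sending $\alpha$ to $1_{\al{C}}$ and $0$ to $0_{\al{C}}$, and by Theorem~\ref{thm-comm} it preserves higher commutators; applying it to $[\,\alpha,\dots,\alpha\,]=0$ yields $[\,1_{\al{C}},\dots,1_{\al{C}}\,]=0_{\al{C}}$ with $k+1$ copies of $1_{\al{C}}$. Hence $\al{C}$ is $k$-supernilpotent.

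Now I would finish by combining this with parts (1) and (2) of Theorem~\ref{thm-dcentr-reduc}. By part (1), $\class{K}^\star_\ell$ is a finite set of finite algebras in a variety in a finite language with a $\ddd$-cube term, hence congruence modular by \cite[Thm.~4.2]{bimmvw}; by the previous paragraph each member of $\class{K}^\star_\ell$ is supernilpotent; and by part (2) the monoliths of all members share one prime characteristic, $p_\ell$. A finite supernilpotent algebra in a congruence modular variety decomposes as a direct product of supernilpotent algebras of prime power order (by \cite[Thm.~3.14]{Ke:CMVS}, since supernilpotence yields a finite bound on the arities of nontrivial commutator terms), and here, up to this decomposition, all the resulting prime powers are powers of $p_\ell$. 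Therefore $\SMP(\class{K}^\star_\ell)$ is an instance of the subpower membership problem for a finite family of finite supernilpotent algebras of prime power order (all for the prime $p_\ell$) in a variety with a cube term, which lies in $\PP$ by \cite{Ma:SMP}. Thus $\SMP(\class{K}^\star_\ell)\in\PP$ for every $\ell\in[q]$, and by the first paragraph $\SMP(\class{K})\in\PP$.

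The step I expect to require the most care is the last one: \cite{Ma:SMP} is most naturally formulated for a single supernilpotent algebra of prime power order, so matching it to the finite family $\class{K}^\star_\ell$ produced by the reduction uses the product-of-prime-power decomposition of supernilpotent algebras in congruence modular varieties together with the common-characteristic conclusion of Theorem~\ref{thm-dcentr-reduc}(2). It is precisely the commutator-preservation in Theorem~\ref{thm-comm} --- turning the supernilpotence of the \emph{centralizer} $(0:\mu_{\al{S}})$ inside $\al{S}$ into supernilpotence of the \emph{whole} algebra $\frak{C}(\al{S},\chi)$ --- that creates the supernilpotent inputs to which \cite{Ma:SMP} can be applied, so this paragraph is really the crux of the reduction-based argument.
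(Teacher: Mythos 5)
Your strategy is essentially the same as the paper's, and the first two-thirds of your argument --- the reduction via Theorem~\ref{thm-dcentr-reduc}(3), and in particular the crux step of converting the supernilpotence of the centralizer $(0:\mu_{\al{S}})$ inside $\al{S}$ into supernilpotence of the whole algebra $\frak{C}(\al{S},\chi)$ using Theorem~\ref{thm-comm} and Corollary~\ref{cor-congr} --- match the paper exactly. A small simplification you miss: since each $\al{C}\in\class{K}^\star_\ell$ is subdirectly irreducible (Theorem~\ref{thm-dcentr-reduc}(2)) as well as supernilpotent, Theorem~\ref{thm-sn-alg} forces $\al{C}$ itself to be nilpotent of prime power order; the ``product-of-prime-power'' decomposition you invoke is trivial here, so you can bypass \cite[Thm.~3.14]{Ke:CMVS} entirely.

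The genuine gap is the very last step, which you flag but do not resolve. Mayr's \cite[Thm.~1.2]{Ma:SMP} gives $\SMP(\{\al{A}\})\in\PP$ for a \emph{single} finite Mal'cev (not merely cube-term) algebra $\al{A}$ that is nilpotent of prime power order; it does not directly handle a finite \emph{family} $\class{K}^\star_\ell$. Saying ``$\SMP(\class{K}^\star_\ell)$ is an instance of the subpower membership problem for a finite family of finite supernilpotent algebras of prime power order, which lies in $\PP$ by \cite{Ma:SMP}'' elides the reduction. The paper fills this by forming $\al{K}:=\prod\class{K}^\star_\ell$ (which, because all members have $p_\ell$-power order and are nilpotent, is again nilpotent of prime power order, and has a Maltsev term by statement (I) following Theorem~\ref{thm-sn-alg}), applying Mayr to the single algebra $\al{K}$, and then invoking \cite[Thm.~4.11]{BMS:SMP} --- which says that for a finite set $\class{L}$ with a cube term, $\SMP(\class{L})$ and $\SMP(\HH\SSS\class{L})$ are polynomial-time equivalent --- to pass from $\SMP(\{\al{K}\})$ to $\SMP(\class{K}^\star_\ell)$ via $\class{K}^\star_\ell\subseteq\HH\SSS\{\al{K}\}$. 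Without that bridge (or an equivalent product/padding argument), the citation to \cite{Ma:SMP} does not yet yield $\SMP(\class{K}^\star_\ell)\in\PP$. You should also state explicitly that a Maltsev term is available (via omission of type $\mathbf{1}$ plus nilpotence, i.e., (I) after Theorem~\ref{thm-sn-alg}), since a $\ddd$-cube term alone is not the hypothesis of \cite{Ma:SMP}.
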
  

The result of \cite[Thm.~6.4]{BMS:SMP} mentioned earlier is a special case
of Theorem~\ref{thm-P-snilp-centrs}; it is obtained from this theorem
by replacing the word `supernilpotent' in condition $(\ddagger)$
by the word `abelian'. Indeed,
the strengthening of condition $(\ddagger)$ where `supernilpotent' is
replaced by `abelian' is equivalent to the condition
that $\class{K}$ generates a residually small variety, and hence 
$\var{V}$ can be chosen to be residually small
(see. e.g., \cite[Cor.~2.4]{BMS:SMP} and the discussion preceding it).

Finite algebras with a Maltsev term that satisfy a slightly weaker 
condition than $(\ddagger)$, namely that in every subdirectly
irreducible homomorphic image the monolith has supernilpotent centralizer, 
already appeared in~\cite{Ma:MASC}
where a relational description of their polynomial clones was given.

\begin{proof}[Proof of Theorem~\ref{thm-P-snilp-centrs}]
  We will use the notation introduced in Definition~\ref{def-newsets}.
  By Theorem~\ref{thm-dcentr-reduc}(3),
  it suffices to show that under the assumptions
  of Theorem~\ref{thm-P-snilp-centrs} we have 
  $\SMPdc(\class{K}^\star_\ell)\in\PP$ for every $\ell\in[q]$.
  Let $\ell\in[q]$ be fixed for the rest of the proof.
  For notational convenience, we will drop the subscript $\ell$; that is,
  we will write
  $\bar{\class{K}}$ for $\bar{\class{K}}_\ell$,
  $\al{I}=([m];\mathcal{F})$ for $\al{I}_\ell=([m_\ell];\mathcal{F})$, and
  $\class{K}^\star$ for $\class{K}^\star_\ell$.
  Furthermore, let $\al{K}:=\prod\class{K}^\star$ be the product of all members of
  $\class{K}^\star$.
  Clearly, the variety $\var{V}(\al{K})$ it generates coincides with the variety
  generated by $\class{K}^\star$.

  We know from Theorem~\ref{thm-dcentr-reduc}(1) that
  $\class{K}^\star$ is a finite set of finite
  algebras in a variety in a finite language (namely, $\hat{\lngF}_{\al{I}}$)
  with a $\ddd$-cube term.
  Therefore, $\al{K}$ is a finite algebra, the variety $\var{V}(\al{K})$
  has a $\ddd$-cube term, and hence is congruence modular.
    
  In the proof of Theorem~\ref{thm-dcentr-reduc}(2) we saw
  that for every algebra $\al{C}=\frak{C}(\al{S},\chi)$ in
  $\class{K}^\star$,
  \begin{itemize}
  \item
    $\al{C}$ is subdirectly irreducible with monolith $\mu_{\al{S}}^*$,
    where $\mu_{\al{S}}$ is the monolith of $\al{S}$, and
  \item
    $1=\alpha_{\al{S}}^*$ centralizes $\mu_{\al{S}}^*$, where
    $\alpha_{\al{S}}=(0:\mu_{\al{S}})$.
  \end{itemize}  
  Now, our additional assumption $(\ddagger)$ implies that
  $\alpha_{\al{S}}$
  is a supernilpotent congruence of $\al{S}$.
  Hence, by Theorem~\ref{thm-comm},
  $1=\alpha_{\al{S}}^*$ is a supernilpotent
  congruence of $\al{C}$.
  Thus, all algebras in $\class{K}^\star$ are supernilpotent.
  Since they are also subdirectly irreducible, 
  Theorem~\ref{thm-sn-alg} implies that they are nilpotent algebras
  of prime power order.
  By Theorem~\ref{thm-dcentr-reduc}(2) we also have that the algebras in
  $\class{K}^\star$ have the same prime characteristic.
  It follows that the cardinality of every algebra in $\class{K}^\star$
  is a power of the same prime.
  Hence, the product of these algebras, $\al{K}$, is also nilpotent of
  prime power order.
  Statement~(I) following Theorem~\ref{thm-sn-alg} also implies that
  $\al{K}$ has a Maltsev term.

  Now we can use \cite[Thm.~1.2]{Ma:SMP} to conclude that
  $\SMP(\{\al{K}\})\in\PP$. Clearly, $\SMP(\{\al{K}\})$ is polynomial time
  reducible to $\SMP(\class{K}^\star)$, since every computation for
  $\SMP(\{\al{K}\})$ can be viewed as a computation for $\SMP(\class{K}^\star)$.
  Conversely, $\SMP(\class{K}^\star)$ is also polynomial time reducible to
  $\SMP(\{\al{K}\})$ for the following reason: by \cite[Thm.~4.11]{BMS:SMP},
  if $\class{L}$ is a finite set of finite algebras in a variety
  in a finite language with a cube term, then
  $\SMP(\class{L})$ and $\SMP(\HH\SSS\class{L})$ are polynomial time
  equivalent. This theorem applies to
  $\class{L}=\{\al{K}\}$, so since 
  $\class{K}^\star\subseteq\HH\SSS\{\al{K}\}$, we get that
  $\SMP(\class{K}^\star)$ is a subproblem of $\SMP(\HH\SSS\{\al{K}\})$,
  which is polynomial time equivalent to $\SMP(\{\al{K}\})$.
  This proves that $\SMP(\class{K}^\star)\in\PP$.
  It follows that for its subproblem we also have
  $\SMPdc(\class{K}^\star)\in\PP$, which completes the proof of
  Theorem~\ref{thm-P-snilp-centrs}.
\end{proof}

\bibliographystyle{plain}

\end{document}